\documentclass[11pt]{amsart}

\usepackage{amssymb}
\usepackage{url}
\usepackage{adjustbox}

\usepackage{wasysym}
\usepackage{amsmath, calc, mathdots, physics}

\usepackage{pgfplots}
\pgfplotsset{width=10cm,compat=1.9}

%\tikzcdset{every label/.append style = {font = \normalsize}}
\usepackage{mathrsfs,multicol}
\usepackage[pagebackref, 
hypertexnames=false, colorlinks, citecolor=red, linkcolor=red]{hyperref}
\usetikzlibrary{decorations.pathmorphing}

%%%%%%%%%%%%%%%%%%%%%%%%%%%%%
		  %
	\setlength{\textwidth}{15.5cm}			  %
	\setlength{\textheight}{22cm}			  %
	\setlength{\topmargin}{-.5cm}			  %
	\setlength{\oddsidemargin}{6mm}			  %
	\setlength{\evensidemargin}{6mm}		  %
	\setlength{\abovedisplayskip}{3mm}		  %
	\setlength{\belowdisplayskip}{3mm}		  %
	\setlength{\abovedisplayshortskip}{0mm}	  %
	\setlength{\belowdisplayshortskip}{2mm}	  %
	\setlength{\baselineskip}{12pt}			  %
	\setlength{\normalbaselineskip}{12pt}	  %
	\normalbaselines						  %
    \setlength{\columnsep}{.1cm}

\newcommand{\wt}{\widetilde}

\def\TT{\mathbb{T}}
\def\RR{\mathbb{R}}
\def\ZZ{\mathbb{Z}}
\def\CC{\mathbb{C}}
\def\NN{\mathbb{N}}
\def\SS{\mathbb{S}}

\newcommand{\la}{{\lambda}}
\newcommand{\f}{{\varphi}}

\newcommand{\cU}{{\mathcal{U}}}

\newcommand{\R}{{\mathbb  R}}

\newcommand{\te}{{\theta}}

\makeatletter
\def\Ddots{\mathinner{\mkern1mu\raise\p@
\vbox{\kern7\p@\hbox{.}}\mkern2mu
\raise4\p@\hbox{.}\mkern2mu\raise7\p@\hbox{.}\mkern1mu}}
\makeatother

\newcommand{\cF}{\mathcal{F}}
\newcommand{\cA}{\mathcal{A}}
\newcommand{\eps}{\varepsilon}

\newcommand{\bd}{\boldsymbol{d}}

\newcommand{\cD}{\mathcal{D}}

\newcommand{\cE}{\mathcal{E}}
\newcommand{\cR}{\mathcal{R}}

\newcommand{\cM}{\mathcal{M}}

\newcommand{\fD}{\mathfrak{D}}

\newcommand{\fM}{\mathfrak{M}}

\newcommand{\cI}{\mathcal{I}}

\DeclareMathOperator{\Ker}{Ker}

\DeclareMathOperator{\dom}{dom}

\DeclareMathOperator{\Ran}{ran}

\newcommand{\cS}{\mathcal{S}}
\newcommand{\II}{\mathbb{I}}
\newcommand{\w}{\omega}

\newcommand{\ci}[1]{_{ {}_{\scriptstyle #1}}}
%\ci --- Capital index
\newcommand{\ti}[1]{_{\scriptstyle \text{\rm #1}}}
%\ti --text index

\definecolor{darkblue}{rgb}{0.2,0.2,0.6}
\definecolor{darkgreen}{rgb}{0.2,0.6,0.2}

%%%%%%%%%%%%%%%%%%%%%%%%%%%%%
% mathlig.tex
%
% (c) 2001 Jules Bean <jules@jellybean.co.uk>
% Permission is granted to freely use, copy, and distribute
% without restriction. Permission is also granted to distribute
% modified versions and incorporate into your own macro packages
% if a brief acknowledgement is given.
%
% Uersion 1.0  11 May 2001

% Define 'mathmode' ligatures, as in
%
%\mathlig{->}{\rightarrow}
%\mathlig{<-}{\leftarrow}
%\mathlig{<->}{\leftrightarrow}
%
% Works even if they have common prefixes (takes the longest match,
% then backtracks).
%
% Can be used to create ligatures even if the second
% and subsequent characters have weird catcodes, but I don't recommend
% it.

\count255\catcode`@
\catcode`@=11
\chardef\mathlig@atcode\count255

% Let or def an 'active' version of a token.  Thanks to Donald
% Arseneau and groups.google.com
\def\actively#1#2{\begingroup\uccode`\~=`#2\relax\uppercase{\endgroup#1~}}
% Eat the next token, and then pass control to mathlig@next@cmd
\def\mathlig@gobble{\afterassignment\mathlig@next@cmd\let\mathlig@next= }
% Used to delimit delimited arguments, and ifx tests for emptyness.
\def\mathlig@delim{\mathlig@delim}
% Some macros to help dealing with 'computed names' (csnames).
% Just like def, but first parameter is evaluated as a csname
\def\mathlig@defcs#1{\expandafter\def\csname#1\endcsname}
% A version of \let <normal cs><computed cs>
\def\mathlig@let@cs#1#2{\expandafter\let\expandafter#1\csname#2\endcsname}
% Maintain a 'list' of tokens as a macro which expands to them
\def\mathlig@appendcs#1#2{\expandafter\edef\csname#1\endcsname{\csname#1\endcsname#2}}

% The main externally visible macro.
% Defines '#1' as a 'ligature' to expand to #2
% First uses \mathlig@checklig to make sure that all initial segments of
% #1 as set up as ligatures.
% Then sets the ligature #1 to expand to #2.
\def\mathlig#1#2{\mathlig@checklig#1\mathlig@end\mathlig@defcs{mathlig@back@#1}{#2}\ignorespaces}

% All the @check macros should be idempotent: they set up the definitions
% only if it hasn't already been done.

%Check #1#2 can be a ligature:
% Check #1 is math-active.
% If not, save the mathcode, set the macros #1
%Now check the suffix tables with \checkrest
\def\mathlig@checklig#1#2\mathlig@end{%
 \expandafter\ifx\csname mathlig@forw@#1\endcsname\relax
 \expandafter\mathchardef\csname mathlig@back@#1\endcsname=\mathcode`#1%
 \mathcode`#1"8000\actively\def#1{\csname mathlig@look@#1\endcsname}%
 \mathlig@dolig#1\mathlig@delim
\fi
\mathlig@checksuffix#1#2\mathlig@end
}

%Two-part macro.
%Check suffix tables. If #2 is empty, no suffix!
%Otherwise, we make sure #2 is a valid suffix for #1,
%then recurse for #3 on #1#2
\def\mathlig@checksuffix#1#2\mathlig@end{%
\ifx\mathlig@delim#2\mathlig@delim\relax\else\mathlig@checksuffix@{#1}#2\mathlig@end\fi
}
\def\mathlig@checksuffix@#1#2#3\mathlig@end{%
\expandafter\ifx\csname mathlig@forw@#1#2\endcsname\relax\mathlig@dosuffix{#1}{#2}\fi
\mathlig@checksuffix{#1#2}#3\mathlig@end
}

% The do macros should not be called more than once for a given ligature.

% Add #2 to the list of valid suffixes for #1
% Then make the ligature for #1#2 one which only backtracks, for now
\def\mathlig@dosuffix#1#2{%
\mathlig@appendcs{mathlig@toks@#1}{#2}%
\mathlig@dolig{#1}{#2}\mathlig@delim
}

% Setup #1#2 so that ligatures beginning #1#2 work.
% Such ligatures will look at what's coming next (in case
% of a longer ligature) and, failing that,
% backtrack to just #1

\def\mathlig@dolig#1#2\mathlig@delim{%
%The look macro just \futurelets what's coming up and
%then passes control to forw
 \mathlig@defcs{mathlig@look@#1#2}{%
 \mathlig@let@cs\mathlig@next{mathlig@forw@#1#2}\futurelet\mathlig@next@tok\mathlig@next}%
%The forw macro uses chck to try all possible suffixes, passing control
%either to one of those, or to the back macro
 \mathlig@defcs{mathlig@forw@#1#2}{%
  \mathlig@let@cs\mathlig@next{mathlig@back@#1#2}%
  \mathlig@let@cs\checker{mathlig@chck@#1#2}%
  \mathlig@let@cs\mathligtoks{mathlig@toks@#1#2}%
  \expandafter\ifx\expandafter\mathlig@delim\mathligtoks\mathlig@delim\relax\else
  \expandafter\checker\mathligtoks\mathlig@delim\fi
  \mathlig@next
 }%
%The toks macro just stores the suffixes
 \mathlig@defcs{mathlig@toks@#1#2}{}%
%The chk macro goes through the suffixes one by one
%tail recursing until it runs out, or finds one.
 \mathlig@defcs{mathlig@chck@#1#2}##1##2\mathlig@delim{%
  %\message{Lig so far '#1#2', checking for '##1'}%
  \ifx\mathlig@next@tok##1%
   \mathlig@let@cs\mathlig@next@cmd{mathlig@look@#1#2##1}\let\mathlig@next\mathlig@gobble
  \fi
  \ifx\mathlig@delim##2\mathlig@delim\relax\else
   \csname mathlig@chck@#1#2\endcsname##2\mathlig@delim
  \fi
 }%
%
% The back macro, defined only if this is a ligature of at least
% two characters, is a default fallback: go back to the previous char.
% (If this is a ligature of only one character, the mathcode will have
% been saved and set as the fallback elsewhere)
 \ifx\mathlig@delim#2\mathlig@delim\else
  \mathlig@defcs{mathlig@back@#1#2}{\csname mathlig@back@#1\endcsname #2}%
 \fi
}%

\catcode`@\mathlig@atcode

\mathchardef\ordinarycolon\mathcode`\:
\def\vcentcolon{\mathrel{\mathop\ordinarycolon}}
\mathlig{:=}{\vcentcolon=}
\mathlig{::=}{\vcentcolon\vcentcolon=}

%%%%%%%%%%%%%%%%%%%%%%%%%%%%%
%%%%%%%%%%%%%%%%%%%%%%%%%%%%%
% Environments
%%%%%%%%%%%%%%%%%%%%%%%%%%%%%
%%%%%%%%%%%%%%%%%%%%%%%%%%%%%

\numberwithin{equation}{section}

\theoremstyle{plain}
\newtheorem{theo}{Theorem}[section]
\newtheorem{cor}[theo]{Corollary}
\newtheorem{lem}[theo]{Lemma}
\newtheorem{prop}[theo]{Proposition}

\theoremstyle{definition}

\newtheorem*{theorem*}{Theorem}
\newtheorem*{idea*}{Idea}

\theoremstyle{remark}
\newtheorem{rem}[theo]{Remark}

\newtheorem*{ex*}{Example}
\newtheorem*{exs*}{Examples}
\newtheorem*{rems*}{Remarks}

%%%%%%%%%%%%%%%%%%%%%%%%%%%%%
%%%%%%%%%%%%%%%%%%%%%%%%%%%%%
% Title and author
%%%%%%%%%%%%%%%%%%%%%%%%%%%%%
%%%%%%%%%%%%%%%%%%%%%%%%%%%%%

\title[Dirac operator with singular interactions on star-graphs]{Self-adjointness of the
2D Dirac operator with singular interactions supported on star-graphs}

\author{Dale~Frymark*}
\address{Department of Theoretical Physics, Nuclear Physics Institute, Czech Academy of Sciences, 25068 Řež, Czech Republic
}
\email{frymark@ujf.cas.cz}

\author{Vladimir Lotoreichik}
\address{Department of Theoretical Physics, Nuclear Physics Institute, Czech Academy of Sciences, 25068 Řež, Czech Republic      
}
\email{lotoreichik@ujf.cas.cz}

\thanks{* Corresponding Author}

\keywords{Dirac operator, star-graph, self-adjointness, deficiency indices,
	self-adjoint extensions, Lorentz-scalar $\delta$-shell interaction}
\subjclass[2010]{81Q10, 35P05, 35Q40, 47B25}

\begin{document}

\begin{abstract}
We consider the two-dimensional Dirac operator with Lorentz-scalar $\delta$-shell interactions on each edge of a star-graph. An orthogonal decomposition is performed which shows such an operator is unitarily equivalent to an orthogonal sum of half-line Dirac operators with off-diagonal Coulomb potentials. This decomposition reduces the computation of the deficiency indices to determining the number of eigenvalues of a one-dimensional spin-orbit operator in the interval $(-1/2,1/2)$. 

If the number of edges of the star graph is two or three, these deficiency indices can then be analytically determined for a range of parameters. For higher numbers of edges, it is possible to numerically calculate the deficiency indices. Among others, examples are given where the strength of the Lorentz-scalar interactions directly change the deficiency indices while other parameters are all fixed and where the deficiency indices are $(2,2)$, neither of which have been observed in the literature to the best knowledge of the authors. For those Dirac operators which are not already self-adjoint and do not have $0$ in the spectrum of the associated spin-orbit operator, the distinguished self-adjoint extension is also characterized. 
\end{abstract}

\maketitle

\setcounter{tocdepth}{1}
\tableofcontents

%%%%%%%%%%%%%%%%%%%%%%%%%%%%%
%%%%%%%%%%%%%%%%%%%%%%%%%%%%%
\section{Introduction}
%%%%%%%%%%%%%%%%%%%%%%%%%%%%%
%%%%%%%%%%%%%%%%%%%%%%%%%%%%%

Dirac operators with singular interactions were first studied in the 1980's by Gesztesy and Šeba in one dimension and later by Dittrich, Exner and Šeba \cite{DES, GS} in three dimensions. After that, the subject fell into relative obscurity for 25 years until a revival took place in 2014 due to a series of papers by Arrizabalaga, Mas and Vega~\cite{AMS, AMS2, AMS3}. Since then, Dirac operators with various types of singular interactions have garnered much interest and been studied in many different settings, see e.g. \cite{BEHL1,BEHL2,BHOBP,BFSVDB,HOBP}. This renewed interest is due in part to their applications in physics to the confinement of quarks~\cite{J75} and to the mathematical model of graphene~\cite{BFSVDB2}, and in part because they serve as an idealized model for the Dirac operator with a regular potential localized in the vicinity of a surface~\cite{CLMT, MP}. 

In the present paper, we study the two-dimensional Dirac operator with a singular interaction supported on a star-graph with finitely many edges. This star-graph separates the Euclidean plane into finitely many sectors
with corners at the origin and thus can be viewed as a special corner domain.

Many results have been obtained in the past for polygons. Let $n\in\NN_0$ be the number of non-convex corners of
	the polygon $\Omega\subset\RR^2$; angles with magnitude larger than $\pi$. For instance, Birman and Skvortsov~\cite{BS62} (see also~\cite[Section 10]{DM15} and~\cite{P13}) analyzed the Dirichlet Laplacian acting in the Hilbert space $L^2(\Omega)$ as
\[
	H^2(\Omega)\cap H^1_0(\Omega)\ni u\mapsto -\Delta u,
\]
and found the deficiency indices to be $(n,n)$. A detailed analysis of this and related models in a slightly different terminology can be found in the monographs~\cite{D88, G, G92}.

A direct counterpart of this result for the two-dimensional Dirac operator on a (curvilinear) polygon with the infinite mass boundary condition is obtained by Le Treust and Ourmi\`{e}res-Bonafos in~\cite{TOB} and by Pizzichillo and Van Den Bosch in~\cite{PVDB}; the deficiency indices again given by the number of non-convex corners. The manuscript \cite{PVDB} also obtains a generalization for Lorentz-scalar $\delta$-shell interactions supported on the boundary of a polygon, with the deficiency indices given simply by the number of corners.

Behind all these considerations of Dirac operators on polygons there is an explicit analysis of the Dirac operator on an infinite sector combined with a localization technique. This model of an infinite sector, however, admits further generalizations. One important case is considered by Cassano and the second author~\cite{CL} for infinite mass boundary conditions: where the orientation of the normal vector is opposite on the two edges of the sector. In that case, the normal vector enters the boundary condition and gives rise to a slightly different model where the deficiency indices are unequal. 

To the best knowledge of the authors, no previous investigation of two-dimensional Dirac operators on sectors with singular interactions
have revealed deficiency indices higher than $(1,1)$. Moreover, there are no known cases where varying the singular interaction strength and fixing all geometric parameters results in the deficiency indices changing. It is therefore natural to wonder whether two-dimensional Dirac operators with singular interactions on star-graphs, a further generalization of the infinite sector model discussed above, also adhere to these trends -- these lines of inquiry are the main motivations of the manuscript. The answers, determined in Subsections \ref{ss-threeequalleads} and \ref{ss-22}, are negative; there are star-graphs where the deficiency indices of the respective Dirac operator vary with the strengths of the singular interactions and where they are $(2,2)$.  

In order to discuss the self-adjointness of such Dirac operators and other results of the manuscript, it is first necessary to introduce some terminology. Recall that the $2\times 2$ Hermitian {\em Pauli matrices} $\sigma_1$, $\sigma_2$, and $\sigma_3$ are given by
\begin{align*}
\sigma_1=\begin{pmatrix}
0 & 1 \\
1 & 0
\end{pmatrix},\hspace{1.5em}
\sigma_2=\begin{pmatrix}
0 & -i \\
i & 0
\end{pmatrix}\text{ and }\hspace{.5em}
\sigma_3=\begin{pmatrix}
1 & 0 \\
0 & -1
\end{pmatrix}.
\end{align*}
These matrices satisfy the anti-commutation relation $\sigma_j\sigma_i+\sigma_i\sigma_j=2\delta_{ij}$ for $i,j\in\{1,2,3\}$, where $\delta_{ij}$ is the Kronecker delta. It is convenient to define $\sigma:=(\sigma_1,\sigma_2)$ so that for $\va{x}=(x_1,x_2)^{\top}\in\RR^2$
\begin{align*}
\sigma\cdot \va{x}:=x_1\sigma_1+x_2\sigma_2=
\begin{pmatrix} 
0 & x_1-ix_2 \\
x_1+ix_2 & 0 
\end{pmatrix}.
\end{align*}
The first-order Dirac differential expression in $\RR^2$ is given by
\begin{align}\label{e-freedirac}
\cD:=-i(\sigma\cdot\nabla)=\begin{pmatrix}
0 & -i(\partial_1-i\partial_2) \\
-i(\partial_1+i\partial_2) & 0
\end{pmatrix}.
\end{align}
A mass term is not included in the expression as this is crucial only for spectral considerations which fall outside the scope of the current manuscript. Let $\Gamma = \cup_{j=1}^N\Gamma_j$ be a star-graph, where $\{\Gamma_j\}_{j=1}^N$ are infinite rays emerging from the origin with normal vectors all oriented clockwise and enumerated counter-clockwise; thereby separating $\RR^2$ into $N$ sectors. Let $\{\tau_j\}_{j=1}^N\in\RR^N$ denote the strengths of Lorentz-scalar $\delta$-shell interactions on each respective edge of the star-graph $\Gamma$.
We consider the Dirac operator $\fD_N$, which is associated to the formal differential expression
\begin{equation}\label{eq:diff_expr}
	-i(\sigma\cdot\nabla) + \sum_{j=1}^N \tau_j\sigma_3\delta_{\Gamma_j},
\end{equation}   
where $\delta_{\Gamma_j}$ is the Dirac $\delta$-function supported on $\Gamma_j$.
The operator $\fD_N$ is rigorously defined
in the Hilbert space $L^2(\RR^2,\CC^2)$
acting via the differential expression $\cD$ on the domain consisting of two-component $H^1$-functions on each sector subject to special boundary conditions on each edge $\Gamma_j$ corresponding to $\tau_j$~\cite{BHOBP, CLMT} (see Section~\ref{s-star} for details).
Throughout the paper we exclude the case of confinement and assume that $\tau_j\ne \pm 2$ for all $j\in\{1,2,\dots,N\}$. It should be noted that formally equation \eqref{eq:diff_expr} anticommutes with the antiunitary involution given by the composition of componentwise complex conjugation and multiplication by $\sigma_1$.

The operator $\fD_N$ is shown to be closed, densely defined and symmetric, so it remains only to identify whether it has non-trivial deficiency indices or is self-adjoint. Our method of analysis relies on separation of variables in polar coordinates, with the angular component naturally giving rise to a one-dimensional spin-orbit operator $J_N$ acting in the Hilbert space $L^2(\mathbb{S}^1,\CC^2)$ with $N$ special point interactions. This spin-orbit operator is shown to be self-adjoint by identifying it with a momentum-type operator on a certain graph~\cite{E}. 

Spectral analysis of the spin-orbit operator $J_N$ allows us to obtain a decomposition of $\fD_N$ into an orthogonal sum of half-line Dirac operators with off-diagonal Coloumb potentials, expressed via the eigenvalues of $J_N$. In particular, we find that the multiplicity of the eigenvalues of $J_N$ is at most two; double eigenvalues should be treated with special care in the decomposition of $\fD_N$. Further subtlety is required to handle a potential double eigenvalue at $0$. 

Fortunately, the half-line Dirac operators arising in the decomposition of $\fD_N$ have appeared many times in the literature and with the help of, i.e.~\cite{CP}, it is shown that the deficiency indices $(n_+(\fD_N),n_-(\fD_N))$ of $\fD_N$ are precisely equal to half of the number of the eigenvalues of $J_N$ in the interval $(-1/2,1/2)$, with multiplicities taken into account. 
Thus, determining the deficiency indices of $\fD_N$ reduces to the spectral analysis of $J_N$. In the most general case it is only possible to prove an upper bound using perturbation theory:
\begin{equation}\label{eq:bnd}
n_+(\fD_N) = n_-(\fD_N) \le N.
\end{equation}

However, in many special cases it is possible to conduct an explicit analysis of the spectrum of $J_N$. The most notable being when $N = 2$, i.e.~the singular interactions are supported on a broken line, see Section \ref{s-wedge} for full details.  Several conditions are given in Proposition \ref{c-simple} that help classify when eigenvalues are simple or have multiplicity two. 
If the interaction  strengths are equal on each edge, so $\tau_1 = \tau_2$, then $n_+(\fD_2) = n_-(\fD_2) = 1$ unless $\Gamma$ is a straight line, where the respective Dirac operator is self-adjoint. Note this setup is essentially covered by the analysis in~\cite{PVDB}. If the interaction strengths are opposite on each edge, so $\tau_1 = -\tau_2$, we also obtain that $n_+(\fD_2) = n_-(\fD_2) = 1$ (even in the case $\Gamma$ is a straight line). The additional degenerate case where the interaction supported on one of the rays has strength $0$, and therefore the geometric situation collapses to the consideration of an interaction supported on one ray, also has $n_+(\fD_2) = n_-(\fD_2) = 1$. To the best of our knowledge, it is the first time that a Lorentz-scalar $\delta$-shell interaction supported on a non-closed curve is analyzed.

Explicit analysis is also viable when $\Gamma$ is a symmetric graph with $N = 3$ edges and $\tau = \tau_1 = \tau_2 = \tau_3\ne \pm 2$. Critically, it is observed in Corollary \ref{c-3defindices} that the respective Dirac operator transitions from being self-adjoint to having deficiency indices $(1,1)$ when the absolute value of the interaction strength $\tau$ exceeds $2\sqrt{3}$. 

Eigenvalues of $J_N$ for $N=2,3$ were, in essence, established by taking the determinant of a $2N\times2N$ matrix and setting it equal to zero. The resulting equation involves the angles and singular interaction strengths of each edge, as well as the spectral parameter. For star-graphs with a higher number of edges, it is therefore not feasible to obtain and then extract spectral information from such a formula. Thankfully, it is possible to establish an alternative spectral condition, at least for symmetric star-graphs, thanks to the identification of $J_N$ with a momentum operator on a certain graph. In particular, the number of eigenvalues of $J_N$ in the interval $(-1/2,1/2)$ with multiplicities taken into account is found to be equal to the number of eigenvalues lying on the arc $\{z\in\TT\colon 0< {\rm arg}(z)<2\pi/N\}$ of the unit circle, with multiplicities, of an explicitly given unitary matrix. This method is applied to the symmetric star-graph with $N=6$ edges and coupling constants $\tau_1 = \tau_3 = \tau_4 =\tau_6 = 1$, $\tau_2=\tau_5 = -1$ in order to show that the respective Dirac operator possesses deficiency indices $(2,2)$. Further numerical tests using the alternative condition have not identified any examples with deficiency indices higher than $(1,1)$ when $N\le 5$, indicating that the bound in equation \eqref{eq:bnd} can probably be improved. We also expect that by taking star-graphs with large number of edges and properly adjusted interaction strengths one can construct examples with arbitrarily large equal deficiency indices.

It is also possible to consider electrostatic $\delta$-shell interactions on each lead of the $N$ star-graph by adding extra contributions to equation \eqref{eq:diff_expr}, as the associated Dirac operator $\fD_N$ is still densely defined and symmetric. Indeed, such interactions are initially considered in Section \ref{s-wedge} but the subsequent analysis immediately breaks down because the respective spin-orbit operator is not self-adjoint. Computation of the deficiency indices for such interactions remains an open question. 

Finally, when the deficiency indices of $\fD_N$ are non-trivial we characterize self-adjoint extensions of $\fD_N$ using the classical von Neumann extension theory. If $0\notin\sigma(J_N)$, then it is possible to identify the unique ``distinguished'' self-adjoint extension; the extension whose domain is contained in the Sobolev space $H^{1/2}$ on each sector. In fact, the regularity of this extension is slightly better, in terms of the scale of Sobolev spaces, and can be computed in terms of the spectral gap for $J_N$. The special case $0\in\sigma(J_N)$, on the other hand, does not have a self-adjoint extension whose domain in contained in $H^{1/2}$ and there is no clear way to fix the distinguished self-adjoint extension.

%%%%%%%%%%%%%%%%%%%%%%%%%%%%%
%%%%%%%%%%%%%%%%%%%%%%%%%%%%%
\subsection*{Structure of the paper}\label{ss-structure}
%%%%%%%%%%%%%%%%%%%%%%%%%%%%%
%%%%%%%%%%%%%%%%%%%%%%%%%%%%%

Section \ref{s-wedge} presents a detailed analysis of the case where the star-graph has $N=2$ edges -- where the star-graph is simply a broken line. This analysis provides both a base case for the more general star-graphs of Section \ref{s-star} as well as many extra details that are too difficult to calculate in full generality. Subsections \ref{ss-twoleads} and \ref{ss-twoleads2} determine the number of eigenvalues of the spin-orbit operator in the interval $(-1/2,1/2)$ when the strengths of the Lorentz-scalar interactions are equal or opposite on either edge. In Subsection~\ref{ss-slit} we perform the same analysis for the Lorentz-scalar interaction supported on a ray. Subsection \ref{ss-orthowedge} then details an orthogonal decomposition which shows that the Dirac operator is unitarily equivalent to half-line Dirac operators with off-diagonal Coulomb potentials which depend on the studied eigenvalues, see Theorem \ref{p-decomp}.

Section \ref{s-star} proves that the orthogonal decomposition of the Dirac operator with $N=2$ edges from Subsection \ref{ss-orthowedge} holds in general for $N\in\NN$. In particular, the proof that the spin-orbit operator is self-adjoint is extended and relies on an important identification with a momentum operator on a certain graph, see Proposition \ref{p-sathree}. Subsection \ref{ss-threeleads} gives an explicit formula for the eigenvalues of the spin-orbit operator when the star graph consists of $N=3$ edges. Subsection \ref{ss-threeequalleads} then uses this formula to analyze the special case where the three edges are equally spaced and the Lorentz-scalar $\delta$-shell interactions on each lead have the same strength. In the special case where the sectors of $\RR^2$ defined by the star-graph each have the same opening angle, Subsection \ref{ss-altspec} exploits the connection to momentum operators used in some of the previous proofs to reduce the problem of finding eigenvalues of the spin-orbit operator in an interval to finding eigenvalues of a unitary matrix on an arc of the unit circle. An example using this method is computed in Subsection \ref{ss-22} where $N=6$ and the respective Dirac operator is shown to have deficiency indices $(2,2)$.

Dirac operators that are not already self-adjoint then have their self-adjoint extensions characterized by the classical von Neumann theory in Section \ref{s-distinguished}. When $0\notin\sigma(J_N)$, the unique distinguished self-adjoint extension is also constructed. Appendix \ref{app} includes a technical lemma which is used in the proof of the orthogonal decomposition.

%%%%%%%%%%%%%%%%%%%%%%%%%%%%%
%%%%%%%%%%%%%%%%%%%%%%%%%%%%%
\section{Singular interaction supported on a broken line}\label{s-wedge}
%%%%%%%%%%%%%%%%%%%%%%%%%%%%%
%%%%%%%%%%%%%%%%%%%%%%%%%%%%%

We first consider the special case where the star-graph $\Gamma$ has only two edges and therefore splits the plane into two sectors. Adhering to the nomenclature of existing literature, such a star-graph is viewed as a broken line. While a similar situation is considered in~\cite{PVDB}, where the Lorentz-scalar $\delta$-shell interaction is supported on the boundary of a curvilinear polygon, the analysis here differs significantly. In particular, we assume that the Lorentz-scalar interactions can be of different strengths on each edge and include a partial analysis of the case where both electrostatic and Lorentz-scalar $\delta$-shell interactions are allowed. Also note that the notation of this section differs from that of Section \ref{s-star}, as it is more suitable for this simple geometric configuration.

Define the wedge
\begin{align*}
    \Omega_+:=\left\{(r\cos\te,r\sin\te)\in\RR^2\colon r>0,~\te\in\II_+\right\}\subset\RR^2,
\end{align*}
where $\II_+:=(-\w,\w)$ with $\w\in(0,\pi)$. The wedge has an opening angle of $2\w$ and divides the plane into two domains; the complementary sector is then denoted by 
\[
\Omega_-:=\RR^2\setminus\overline{\Omega}_+
= \left\{(r\cos\te,r\sin\te)\in\RR^2\colon r>0,~\te\in\II_-\right\},
\]
where $\II_-:=(\w,2\pi-\w)$.
A function (spinor) $u\in L^2(\RR^2,\CC^2)$ can be decomposed into two parts corresponding to these domains
\begin{align*}
    u=u_+\oplus u_-\in L^2(\Omega_+,\CC^2)\oplus L^2(\Omega_-,\CC^2),~~\text{ where }~~ u_\pm :=u|_{\Omega_\pm}.
\end{align*}
The two sides of the wedge can then be denoted by
\begin{align*}
\Gamma^l&:=\left\{(r\cos\w,r\sin\w)\in\RR^2\colon r>0\right\}, ~\text{and} \\
\Gamma^r&:=\left\{(r\cos\w,-r\sin\w)\in\RR^2\colon r>0\right\}.
\end{align*}
The dependence on $\w$ is suppressed for the sake of notational accessibility and the choice $\w=\pi/2$, of course, refers to the straight line. The Dirac operator of interest may possess two different types of singular interactions on each edge: electrostatic and Lorentz-scalar $\delta$-shell interactions. The strengths of these interactions will be interpreted by the real parameters $\eta_l$, $\eta_r$ and $\tau_l$, $\tau_r$, respectively. Letting $\sigma_0$ denote the identity matrix of rank two, such a Dirac operator corresponds to the formal differential expression:
\begin{align}\label{eq:formal}
    -i(\sigma\cdot\nabla)+(\eta_l\sigma_0+\tau_l\sigma_3)\delta_{\Gamma^l}+(\eta_r\sigma_0+\tau_r\sigma_3)\delta_{\Gamma^r}.
\end{align}
The interactions can be translated into boundary conditions on each edge of the wedge, and were first rigorously described in \cite{BHOBP}. Denote the normal vectors on $\Gamma^l$ and $\Gamma^r$, which are chosen to be pointing outward from $\Omega_+$, as $\va{\nu}_l=(\nu_{l,1},\nu_{l,2})=(-\sin\w,\cos\w)$ and $\va{\nu}_r=(\nu_{r,1},\nu_{r,2})=(-\sin\w,-\cos\w)$, respectively. Recall that the free massless Dirac differential expression is denoted by $\cD$ and given in equation \eqref{e-freedirac}. The operator associated to the formal differential expression~\eqref{eq:formal} in the current context is rigorously given by
\begin{equation}\label{e-dirac}
    \begin{aligned}
    \fD u&\!:=\! (\cD u_+)\oplus(\cD u_-), \\
    \dom\fD&\!:=\!\Big\{u=u_+\oplus u_-\in H^1(\Omega_+,\CC^2)\oplus H^1(\Omega_-,\CC^2)\colon \\
    &-i(\sigma_1\nu_{j,1}+\sigma_2\nu_{j,2})\left(u_+|_{\Gamma^j}\!-\!u_-|_{\Gamma^j}\right)\!=\!\frac{1}{2}(\eta_j\sigma_0\!+\!\tau_j\sigma_3)\left(u_+|_{\Gamma^j}+u_-|_{\Gamma^j}\right),j\!=\!l,r\Big\}. \\
\end{aligned}
\end{equation}
The trace operator implicitly denoted by the ``restriction'' in equation \eqref{e-dirac} is bounded as an operator from $H^1(\Omega_{\pm},\CC^2)$ into $H^{1/2}(\partial\Omega_{\pm},\CC^2)$ via \cite[Theorem 3.38]{M}, the proof of which still applies in this case because the regions $\Omega_{\pm}$ are Lipschitz hypographs.

While it is possible to consider the operator $\fD$ for all real interaction strengths, we make the additional assumption that neither edge is in the case of confinement: when $\eta_l^2-\tau_l^2=-4$ or $\eta_r^2-\tau_r^2=-4$. If both of these conditions are met, it is possible to decouple the operator into orthogonal sum of operators with respect to decomposition  $L^2(\RR^2,\CC^2) = L^2(\Omega_+,\CC^2)\oplus L^2(\Omega_-,\CC^2)$, see \cite[Lemma 4.1]{BHOBP}. In other words, the boundary conditions can be rewritten as pertaining to each sector individually. Therefore, we prefer to work in the non-confining regime. The intermediate case of one confining edge and one non-confining requires an additional consideration that goes beyond the present paper.

The values $\eta_l^2-\tau_l^2$ and $\eta_r^2-\tau_r^2$ will consistently play a role in calculations, and so will be denoted by $\eps_l$ and $\eps_r$, respectively. Furthermore, it is convenient to label
\begin{align}\label{e-pm}
p_l=1+\frac{1}{4}\eps_l \hspace{.5em} \text{ and } \hspace{.5em} m_l=1-\frac{1}{4}\eps_l,
\end{align}
with $p_r$ and $m_r$ being similarly defined.

It is also necessary to introduce the operator using standard polar coordinates. Let $\va{x}=(x_1,x_2)$ so that $x_1=r\cos\te$ and $x_2=r\sin\te$ and define
\begin{align*}
    \va{e}\ti{rad}(\te)=\frac{\partial \va{x}}{\partial r}=\begin{pmatrix} \cos\te \\ \sin\te \end{pmatrix} \hspace{.5em} \text{ and } \hspace{.5em} 
    \va{e}\ti{ang}(\te)=\frac{\partial \va{e}\ti{rad}}{\partial \te}=\begin{pmatrix} -\sin\te \\ \cos\te \end{pmatrix}.
\end{align*}
The Hilbert spaces $L^2\ti{pol}(\Omega_\pm,\CC^2):=L^2(\RR_+\times\II_\pm,\CC^2;rdrd\te)$ can then be viewed as the tensor products $L^2_r(\RR_+)\otimes L^2(\II_\pm,\CC^2)$, with $L^2_r(\RR_+)=L^2(\RR_+;rdr)$. The unitary transformation
\begin{align*}
    V_\pm:L^2(\Omega_\pm,\CC^2)\to L^2\ti{pol}(\Omega_\pm,\CC^2),
    \qquad (V_\pm u)(r,\theta) := u(r\cos\theta,r\sin\theta), 
\end{align*}
is responsible for changing coordinate systems and yields the polar Sobolev spaces
\begin{align*}
    H\ti{pol}^1(\Omega_\pm,\CC^2):=V_\pm(H^1(\Omega_\pm,\CC^2))=\left\{u\colon u,\partial_r u, r^{-1}(\partial_{\te} u)\in L^2\ti{pol}(\Omega_\pm,\CC^2)\right\}.
\end{align*}
The Dirac operator acting on $L^2\ti{pol}(\Omega_+,\CC^2)\oplus L^2\ti{pol}(\Omega_-,\CC^2)$ is therefore given by
\begin{align*}
    \widetilde{\fD}:=V\fD V^{-1}, \qquad \dom\widetilde{\fD}:=V(\dom\fD),
\end{align*}
where $V := V_+\oplus V_-$.
The corresponding Dirac differential expression in polar coordinates is now
\begin{align}\label{e-polaraction}
    \widetilde{\cD}u=-i(\sigma\cdot\va{e}\ti{rad})\left(\partial_r u+\frac{u}{2r}-\frac{\left(-i\sigma_3\partial_{\te}+\frac{1}{2}\right)u}{r}\right).
\end{align}
In analogy to equation \eqref{e-dirac}, the Dirac operator of interest is thus
\begin{equation}\label{e-polardirac}
    \begin{aligned}
    \widetilde{\fD} u&\!:=\! (\widetilde{\cD} u_+)\oplus(\widetilde{\cD} u_-), \\
    \dom\widetilde{\fD}&\!:=\!\big\{u=u_+\oplus u_-\in H^1\ti{pol}(\Omega_+,\CC^2)\oplus H^1\ti{pol}(\Omega_-,\CC^2)~:~ \\
    &-i(\sigma_1\nu_{j,1}+\sigma_2\nu_{j,2})\left(u_+|_{\Gamma^j}\!-\!u_-|_{\Gamma^j}\right)=\frac{1}{2}(\eta_j\sigma_0+\tau_j\sigma_3)\left(u_+|_{\Gamma^j}\!+\!u_-|_{\Gamma^j}\right), j \!=\! l,r\big\}.
    \end{aligned}
\end{equation}

A more convenient form of the boundary conditions imposed on both $\dom\fD$ and $\dom\widetilde{\fD}$ can be immediately obtained by using \cite[Lemma 4.1]{BHOBP}.

\begin{lem}\label{p-betterbcs}
If the non-confining case where $\eps_l,\eps_r\neq-4$ holds, then $u=u_+\oplus u_-\in H^1_{\rm pol}(\Omega_+,\CC^2)\oplus H^1_{\rm pol}(\Omega_-,\CC^2)$
belongs to $\dom\fD$ if and only if
\begin{equation}\label{e-betterbcs}
\begin{aligned}
u_-|_{\Gamma^l}&=\frac{1}{p_l}
\begin{pmatrix}
m_l & -e^{-i\w}(\eta_l-\tau_l) \\
e^{i\w}(\eta_l+\tau_l) & m_l
\end{pmatrix}
u_+|_{\Gamma^l}, \\
u_-|_{\Gamma^r}&=\frac{1}{p_r}
\begin{pmatrix}
m_r & e^{i\w}(\eta_r-\tau_r) \\
-e^{-i\w}(\eta_r+\tau_r) & m_r
\end{pmatrix}
u_+|_{\Gamma^r}.
\end{aligned}
\end{equation}
\end{lem}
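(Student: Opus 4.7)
The plan is to verify the equivalence edge by edge by solving a $2\times 2$ linear system for the trace $u_-|_{\Gamma^j}$ in terms of $u_+|_{\Gamma^j}$. For the edge $\Gamma^l$ with normal $\va{\nu}_l=(-\sin\w,\cos\w)$, I would first compute $-i(\sigma_1\nu_{l,1}+\sigma_2\nu_{l,2})$ explicitly, using the identities $-\sin\w-i\cos\w=-ie^{-i\w}$ and $-\sin\w+i\cos\w=ie^{i\w}$, which exhibits it as an off-diagonal matrix with entries $-e^{-i\w}$ (upper right) and $e^{i\w}$ (lower left). Since $\tfrac{1}{2}(\eta_l\sigma_0+\tau_l\sigma_3)$ is diagonal with entries $\tfrac{1}{2}(\eta_l\pm\tau_l)$, the boundary condition in \eqref{e-dirac} rearranges into a linear system whose coefficient matrix acting on $u_-|_{\Gamma^l}$ has determinant $\eta_l^2-\tau_l^2+4=\eps_l+4=4p_l$.

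The non-confining assumption $\eps_l\ne-4$ guarantees $p_l\ne 0$, so this matrix is invertible. Inverting it and simplifying the diagonal entries of the resulting product via the identity $4-\eps_l=4m_l$ produces precisely the first formula in \eqref{e-betterbcs}. The edge $\Gamma^r$ is handled identically; the sign flip in the second component of $\va{\nu}_r=(-\sin\w,-\cos\w)$ exchanges $e^{i\w}\leftrightarrow e^{-i\w}$ and toggles the signs on the off-diagonal terms, accounting exactly for the differences between the two lines of \eqref{e-betterbcs}. Alternatively, one may appeal directly to \cite[Lemma 4.1]{BHOBP} as the excerpt suggests: that lemma performs this algebraic inversion in abstract form, and the present formulas are just its specialization to two rays with the chosen outward normals.

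There is no genuine difficulty here. The only non-trivial ingredient is the invertibility of the $2\times 2$ system, which is exactly where the hypothesis $\eps_l,\eps_r\ne-4$ is used; all remaining manipulations are sign and phase bookkeeping, and every step is reversible, so the equivalence with the original boundary conditions is automatic. Since the algebraic identity holds pointwise, it is equivalent to its almost-everywhere version on the $H^{1/2}$ traces, so the characterization of $\dom\fD$ follows.
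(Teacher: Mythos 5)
Your computation is correct and matches what the paper does: the paper's proof is a one-line citation to \cite[Lemma 4.1]{BHOBP}, which performs exactly this inversion of the $2\times 2$ boundary system, and you have simply written out the algebra explicitly. The key observation — that the coefficient matrix acting on $u_-|_{\Gamma^j}$ has determinant proportional to $\eps_j + 4 = 4p_j$, so the non-confining hypothesis $\eps_l,\eps_r\ne-4$ is precisely what makes the inversion possible — is the correct and only substantive point, and the sign and phase bookkeeping for the two normals checks out (I verified that the resulting products are $\tfrac{1}{p_l}\bigl(\begin{smallmatrix} m_l & -e^{-i\w}(\eta_l-\tau_l) \\ e^{i\w}(\eta_l+\tau_l) & m_l \end{smallmatrix}\bigr)$ and $\tfrac{1}{p_r}\bigl(\begin{smallmatrix} m_r & e^{i\w}(\eta_r-\tau_r) \\ -e^{-i\w}(\eta_r+\tau_r) & m_r \end{smallmatrix}\bigr)$, in agreement with \eqref{e-betterbcs}). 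Your closing remark that the manipulation is pointwise-reversible, hence an equivalence on the $H^{1/2}$ traces, is the right justification for the ``if and only if.''
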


The matrices from the above lemma that give the relation between the boundary values will be denoted by $M_l$ and $M_r$, respectively. These matrices do not possess many nice properties, like in the cases of infinite-mass boundary conditions or only Lorentz-scalar interactions, see e.g.~\cite[Proposition 2.2]{PVDB}. This is the effect of having both $\sigma_0$ and $\sigma_3$ in the boundary conditions, which commute and anti-commute with $\sigma\cdot\va{\nu}$, respectively. However, some quick calculations show that 
\begin{equation}\label{e-adjointsrelation}
\begin{aligned}
(M_l)^*(\sigma\cdot\va{\nu}_l)(M_l)&=(\sigma\cdot\va{\nu}_l), \\
(M_r)^*(\sigma\cdot\va{\nu}_r)(M_r)&=(\sigma\cdot\va{\nu}_r).
\end{aligned}
\end{equation}
These relations are sufficient to show that the operator is symmetric.

\begin{prop}\label{p-symmetric}
The operator $\fD$ is densely defined and symmetric in the Hilbert space $L^2(\Omega_+,\CC^2)\oplus L^2(\Omega_-,\CC^2)$.
\end{prop}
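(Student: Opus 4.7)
The plan is to verify density and symmetry separately; the latter rests on the first-order Green's identity for $\cD$ combined with the algebraic relations~\eqref{e-adjointsrelation}.

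For density, I would simply observe that $C_c^\infty(\Omega_+,\CC^2)\oplus C_c^\infty(\Omega_-,\CC^2)\subset\dom\fD$: for any such pair the one-sided traces on $\Gamma^l$ and $\Gamma^r$ from both sides vanish, so the interface conditions in~\eqref{e-dirac} (equivalently, in Lemma~\ref{p-betterbcs}) are automatic. Density of this core in $L^2(\Omega_+,\CC^2)\oplus L^2(\Omega_-,\CC^2)$ is standard.

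For symmetry, given $u,v\in\dom\fD$, I would apply the standard Green's identity for $\cD$ on each of the Lipschitz hypographs $\Omega_\pm$, which is justified because $u_\pm,v_\pm\in H^1(\Omega_\pm,\CC^2)$ have traces in $H^{1/2}(\partial\Omega_\pm,\CC^2)$ by~\cite[Theorem~3.38]{M}. Letting $\va{\nu}_j$ denote the outward unit normal of $\Omega_+$ on $\Gamma^j$ (so the outward normal of $\Omega_-$ on $\Gamma^j$ is $-\va{\nu}_j$) and combining the contributions from the two sectors yields
\begin{align*}
\langle\fD u,v\rangle - \langle u,\fD v\rangle
= -i\!\sum_{j\in\{l,r\}}\!\int_{\Gamma^j}\!\Bigl(\bigl\langle(\sigma\cdot\va{\nu}_j)u_+,v_+\bigr\rangle_{\CC^2} - \bigl\langle(\sigma\cdot\va{\nu}_j)u_-,v_-\bigr\rangle_{\CC^2}\Bigr)\, ds.
\end{align*}

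The key step is then the algebraic cancellation: substituting $u_-|_{\Gamma^j}=M_j u_+|_{\Gamma^j}$ and $v_-|_{\Gamma^j}=M_j v_+|_{\Gamma^j}$ from Lemma~\ref{p-betterbcs} into the second inner product transforms it into $\bigl\langle M_j^*(\sigma\cdot\va{\nu}_j)M_j u_+,v_+\bigr\rangle_{\CC^2}$, and~\eqref{e-adjointsrelation} says precisely that this coincides with $\langle(\sigma\cdot\va{\nu}_j)u_+,v_+\rangle_{\CC^2}$. Hence each integrand vanishes pointwise on $\Gamma^l\cup\Gamma^r$, the boundary contribution is zero, and $\fD$ is symmetric. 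The only mildly technical point is the justification of the Green's identity on the Lipschitz hypographs $\Omega_\pm$ for $H^1$-spinors, but this is classical and has in fact already been used implicitly when stating~\eqref{e-dirac}; the non-confining assumption $\eps_l,\eps_r\ne-4$ enters only through its role in Lemma~\ref{p-betterbcs}.
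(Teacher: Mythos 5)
Your proposal is correct and follows essentially the same route as the paper's proof: density via $C_0^\infty(\Omega_\pm,\CC^2)\subset\dom\fD$, and symmetry via the Green's identity on each Lipschitz sector combined with the substitution $u_-|_{\Gamma^j}=M_j u_+|_{\Gamma^j}$ and the algebraic relations~\eqref{e-adjointsrelation}. The only cosmetic difference is that the paper cites \cite[Theorem~3.34]{M} explicitly for the Green's identity, whereas you flag it as classical.
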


\begin{proof}
The operator is densely defined in $L^2(\Omega_+,\CC^2)\oplus L^2(\Omega_-,\CC^2)$ because $C_0^{\infty}(\Omega_\pm,\CC^2)\subset\dom\fD$ is dense in $L^2(\Omega_\pm,\CC^2)$. Following \cite[Proposition 2.1]{CL}, both $\Omega_+$ and $\Omega_-$ are epigraphs of globally Lipschitz functions so it is easy to derive from \cite[Theorem 3.34]{M} that the Green's identity
\begin{align*}
    \int_{\Omega_\pm}(\cD u_{\pm})\cdot\overline{v_{\pm}}dx-\int_{\Omega_\pm}u_{\pm}\cdot\overline{(\cD v_{\pm})}dx=\mp i\int_{\Gamma^l\cup\Gamma^r}((\sigma\cdot\va{\nu})u_{\pm})\cdot\overline{v_{\pm}}ds
\end{align*}
holds for all $u,v\in H^1(\Omega_\pm,\CC^2)$ where $ds$ denotes the arc length measure along the boundary and the normal $\va{\nu}$ is outwards facing from $\Omega_+$. Hence, for $u,v\in\dom\fD$ we have
\begin{align}\label{e-greens}
    \langle \fD u,v\rangle-\langle u,\fD v\rangle=-i\left[\int_{\Gamma^l\cup\Gamma^r}((\sigma\cdot\va{\nu})u_+)\cdot\overline{v_+}ds
    -\int_{\Gamma^l\cup\Gamma^r}((\sigma\cdot\va{\nu})u_-)\cdot\overline{v_-}ds\right],
\end{align}
where the inner products on the left-hand side are taken in $L^2(\Omega_+,\CC^2)\oplus L^2(\Omega_-,\CC^2)$. The boundary conditions from equation \eqref{e-betterbcs} then allow $u_-|_{\Gamma^l}$, $u_-|_{\Gamma^r}$ and $v_-|_{\Gamma^l}$, $v_-|_{\Gamma^r}$ to be rewritten and we see that
\begin{align*}
    \int_{\Gamma^l\cup\Gamma^r}((\sigma\cdot\va{\nu})u_-)\cdot\overline{v_-}ds&=\int_{\Gamma^l}(\sigma\cdot\va{\nu}_l)(M_lu_+)\cdot\overline{(M_lv_+)}ds 
    +\int_{\Gamma^r}(\sigma\cdot\va{\nu}_r)(M_ru_+)\cdot\overline{(M_rv_+)}ds \\
    &=\int_{\Gamma^l}M_l^*(\sigma\cdot\va{\nu}_l)(M_lu_+)\cdot\overline{(v_+)}ds 
    +\int_{\Gamma^r}M_r^*(\sigma\cdot\va{\nu}_r)(M_ru_+)\cdot\overline{(v_+)}ds \\
    &=\int_{\Gamma^l\cup\Gamma^r}((\sigma\cdot\va{\nu})u_+)\cdot\overline{(v_+)}ds
\end{align*}
with the last equality utilizing equation \eqref{e-adjointsrelation}. The right-hand side of equation \eqref{e-greens} then vanishes and we conclude that $\fD$ is a symmetric operator.
\end{proof}

Clearly, this also means that $\widetilde{\fD}$ is also densely defined and symmetric in $L^2\ti{pol}(\Omega_+,\CC^2)\oplus L^2\ti{pol}(\Omega_-,\CC^2)$. Equation \eqref{e-polaraction} also motivates the definition of an associated spin-orbit-type operator which will act in the Hilbert space $L^2(\II_+,\CC^2)\oplus L^2(\II_-,\CC^2)$ for simplicity:
\begin{equation}\label{e-angular}
\begin{aligned}
J\f&:=\left(-i\sigma_3\f_+'+\frac{\f_+}{2}
\right)\oplus\left(-i\sigma_3\f_-'+\frac{\f_-}{2}
\right), \\
\dom J&:=\Big\{\f=\f_+\oplus\f_-\in H^1(\II_+,\CC^2)\oplus H^1(\II_-,\CC^2)\colon \\
&\hspace{12em}
\f_-(\omega)=M_l
\f_+(\omega),~ 
\f_-(2\pi-\omega)=M_r
\f_+(-\omega)\Big\}.
\end{aligned}
\end{equation}

\begin{prop}\label{p-momentum2}
Assume that $\eps_l,\eps_r\neq\pm4$. The operator $J$ is self-adjoint if and only if $\eta_l=\eta_r=0$. In particular, $J$ has compact resolvent when $\eta_l=\eta_r=0$.
\end{prop}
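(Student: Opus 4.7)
The plan is to reduce self-adjointness of $J$ to two matrix identities via a boundary sesquilinear form calculation, then determine when those identities hold.

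First I would integrate by parts on each of $\II_+$ and $\II_-$ (the bounded multiplication by $1/2$ being self-adjoint and contributing nothing) to obtain, for all $\f, \psi \in \dom J$,
\begin{equation*}
\langle J\f, \psi\rangle - \langle\f, J\psi\rangle
= -i\bigl[(\sigma_3\f_+, \psi_+)_{\CC^2}\bigr]_{-\omega}^{\omega}
- i\bigl[(\sigma_3\f_-, \psi_-)_{\CC^2}\bigr]_{\omega}^{2\pi-\omega}.
\end{equation*}
Substituting the matching conditions of \eqref{e-angular} for both $\f$ and $\psi$ converts this into
\begin{equation*}
-i\bigl((\sigma_3 - M_l^*\sigma_3 M_l)\f_+(\omega),\, \psi_+(\omega)\bigr)
+ i\bigl((\sigma_3 - M_r^*\sigma_3 M_r)\f_+(-\omega),\, \psi_+(-\omega)\bigr).
\end{equation*}
Since the values $\f_+(\pm\omega)$ range freely over $\CC^2$ as $\f$ varies in $\dom J$, $J$ is symmetric if and only if $M_l^*\sigma_3 M_l = \sigma_3$ and $M_r^*\sigma_3 M_r = \sigma_3$.

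Second, I would verify by direct computation that each of these pseudo-unitarity conditions is equivalent to the corresponding electrostatic strength vanishing. Using Lemma~\ref{p-betterbcs} and the identity $p_l^2 - m_l^2 = \eps_l$, the two diagonal entries of $M_l^*\sigma_3 M_l - \sigma_3$ reduce respectively to $2\eta_l(\eta_l + \tau_l) = 0$ and $2\eta_l(\eta_l - \tau_l) = 0$, which together force $\eta_l = 0$; the off-diagonal entry equals a nonzero multiple of $\eta_l m_l$, which vanishes precisely when $\eta_l = 0$ since $m_l \ne 0$ by the standing assumption $\eps_l \ne 4$. The analogous computation for $M_r$ yields $\eta_r = 0$. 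Conversely, when both electrostatic strengths vanish, the same algebra makes $M_l, M_r$ pseudo-unitary with respect to $\sigma_3$, so $J$ is symmetric.

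Third, when $\eta_l = \eta_r = 0$ I would upgrade symmetry to self-adjointness by identifying $J$ with a momentum-type operator on the two-edge, two-vertex metric graph obtained by gluing $\II_\pm$ at their endpoints (see \cite{E}). Each vertex carries a Hermitian boundary form of rank four ($\CC^2$ on each incident edge-end), and the pseudo-unitarity $M_j^*\sigma_3 M_j = \sigma_3$ says precisely that the two-dimensional subspace of boundary data selected by the matching condition is maximally isotropic; the general theory of such matching conditions then furnishes self-adjointness. Compact resolvent follows from $\dom J \subset H^1(\II_+, \CC^2) \oplus H^1(\II_-, \CC^2)$ and Rellich--Kondrachov compactness on the bounded intervals.

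The main obstacle is the second step: carefully tracking the complex phases and sign cancellations in the expansion of $M_l^*\sigma_3 M_l$ to isolate the clean conditions $2\eta_l(\eta_l \pm \tau_l)=0$. Once that algebra is in place, the symmetry reduction is immediate and the self-adjointness upgrade is standard graph-operator theory.
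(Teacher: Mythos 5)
Your proof is correct, and it takes a genuinely different (though ultimately equivalent) route from the paper's. The paper packages the $M_l,M_r$-conditions directly into a scattering matrix $\cU$ on a four-edge/two-vertex momentum-operator graph and applies the one-to-one correspondence of~\cite[Proposition 4.1]{E}: self-adjoint realizations $\leftrightarrow$ unitary $\cU$. It then checks by direct computation of $\cU\cU^*$ and $\cU^*\cU$ that unitarity of $\cU$ is equivalent to $\eta_l=\eta_r=0$. You instead compute the boundary sesquilinear form explicitly and extract the two-sided criterion that $J$ is \emph{symmetric} iff $M_l^*\sigma_3 M_l = \sigma_3$ and $M_r^*\sigma_3 M_r = \sigma_3$, reduce those pseudo-unitarity conditions by the same algebra to $\eta_l=\eta_r=0$, and only then invoke the maximal-isotropic-subspace (Lagrangian) formulation to pass from symmetry to self-adjointness. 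Your version is a bit more self-contained and exhibits precisely \emph{which} property fails when $\eta_j\ne 0$ — namely symmetry itself, not merely essential self-adjointness — which the paper's unitarity test does not make explicit. Both proofs hinge on the same input $p_j^2-m_j^2=\eps_j$, and your conditions $2\eta_j(\eta_j\pm\tau_j)=0$ together with the off-diagonal entry $\propto m_j\eta_j$ (nonzero since $\eps_j\ne 4$) recover exactly the paper's reduction to $\eta_j=0$. Your Rellich--Kondrachov argument for compact resolvent is also valid and slightly more elementary than citing~\cite[Theorem~5.1]{E}, since the graph norm of $J$ on $\dom J$ is equivalent to the $H^1$-norm on the bounded intervals $\II_\pm$.
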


\begin{proof}
The operator $J-1/2$ can be viewed as a momentum operator on a graph with four directed edges, with vectors denoted $\psi_j$ for $j\in\{1,\dots,4$\}, and two vertices: $-\w$ and $\w$. The first and fourth edges are oriented from $-\w$ to $\w$ and the second and third are reversed. The length of the first and the second edges is $2\omega$, while the length of the remaining two edges is $2\pi-2\omega$. In this way, $\psi_1$ and $\psi_2$ represent the two components of $\f_+$ and $\psi_3$ and $\psi_4$ represent the two components of $\f_-$. The Hamiltonian acts on these vectors as $-i\frac{d}{dx}$ and the effect of $\sigma_3$ in $J-1/2$ is accounted for by the direction of the edges. Define the vectors
\begin{align*}
    \psi\ti{out}&:=(\psi_1(-\w),\psi_2(\w),\psi_3(\w),\psi_4(2\pi-\w)), \\
    \psi\ti{in}&:=(\psi_1(\w),\psi_2(-\w),\psi_3(2\pi-\w),\psi_4(\w)).
\end{align*}
Note that the matrix $M_r$ is identical for the angles $-\w$ and $2\pi-\w$, which allows for the vertices $-\w$ and $2\pi-\w$ to be identified and simply denoted by $-\w$. Self-adjoint realizations of this Hamiltonian are in one-to-one correspondence with $4\times 4$ unitary matrices $\cU$ via the boundary condition $\cU\psi\ti{in}=\psi\ti{out}$ by \cite[Proposition 4.1]{E}. 

Note the helpful identity 
\begin{align*}
    \frac{m_j^2+\eps_j}{m_jp_j}=\frac{p_j}{m_j},
\end{align*}
for $j=l,r$. The form of $\cU$ can then be found by rewriting the boundary conditions in equation \eqref{e-betterbcs} and solving for the appropriate boundary values
\begin{align*}
    \cU=\begin{pmatrix}
    0 & -\frac{e^{i\w}(\eta_r-\tau_r)}{m_r} & \frac{p_r}{m_r} & 0 \\
    -\frac{e^{i\w}(\eta_l+\tau_l)}{m_l} & 0 & 0 & \frac{p_l}{m_l} \\
    \frac{p_l}{m_l} & 0 & 0 & -\frac{e^{-i\w}(\eta_l-\tau_l)}{m_l} \\
    0 & \frac{p_r}{m_r} & -\frac{e^{-i\w}(\eta_r+\tau_r)}{m_r} & 0
    \end{pmatrix}.
\end{align*}
Calculations of $\cU\cU^*$ and $\cU^*\cU$, which we omit for brevity, reveal that $\cU$ is unitary if and only if the following four equations are satisfied for both $j=l,r$, for a total of eight conditions:
\begin{align}\label{e-unitary1}
    \left[\frac{(\eta_j\pm\tau_j)}{m_j}\right]^2+\left[\frac{p_j}{m_j}\right]^2&=1,\nonumber\\
   (\eta_j\pm\tau_j)+(\eta_j\mp\tau_j)&=0.
\end{align}
%Equation \eqref{e-unitary1} is satisfied, for both $j=l,r$, only if $\eta_j=0$ and $\tau_{j}\neq\pm2$, or $\eta=-\tau$. 
Equation \eqref{e-unitary1} is clearly satisfied for each index only when $\eta_j=0$. For $\eta_j=0$, the operator $J$ then has compact resolvent by \cite[Theorem~5.1]{E}. The addition of a constant $1/2$ to the Hamiltonian has no effect on these properties and the result follows. 
\end{proof}

The spectral decomposition of $J$ will be used to separate variables for $\fD$, to determine its deficiency indices and to parametrize its self-adjoint extensions.	Self-adjointness of $J$ is therefore crucial for the analysis.
The previous proposition does not claim that for $\eta_l,\eta_r\neq0$ there are no self-adjoint extensions of the Dirac operator $\fD$ given in equation \eqref{e-dirac}. However, this specific choice for the spin-orbit operator does not lead to a viable separation of variables, which is in itself somewhat surprising. This is also the case for the general star-graph with $N$ edges and can be observed if the proof of Proposition \ref{p-momentum2} is adapted to the setup of Proposition \ref{p-sathree}. We therefore restrict our attention in the rest of the manuscript to the case where only Lorentz-scalar $\delta$-shell interactions are present.

Normalized eigenfunctions of the operator $J$ now form an orthonormal basis of the Hilbert space $L^2(\SS^1,\CC^2) = L^2(\II_+,\CC^2)\oplus L^2(\II_-,\CC^2)$. Let $\f=(\f_{+,1},\f_{+,2})\oplus(\f_{-,1},\f_{-,2})\in\dom J$ and $\widetilde{\la}\in\RR$ satisfy $J\f=\widetilde{\la}\f$. Then, $\f$ satisfies
\begin{align*}
    &-i\f_{\pm,1}'=(\widetilde{\la}-1/2)\f_{\pm,1}, \\
    &+i\f_{\pm,2}'=(\widetilde{\la}-1/2)\f_{\pm,2}.
\end{align*}
Treatment of these eigenfunctions is made marginally easier by setting $\la:=\widetilde{\la}-1/2$. The generic solution for this system of equations, now using $\la$, is easily seen to be 
\begin{align}\label{e-genericeigenfunction}
    \f_{\pm}=\begin{pmatrix}
    c_{\pm, 1}e^{i\la\te} \\
    c_{\pm, 2}e^{-i\la\te}
    \end{pmatrix},
\end{align}
with the four constants $c_{\pm, 1},c_{\pm, 2}\in\CC$. Plugging these generic eigenfunctions into the boundary conditions from equation \eqref{e-angular} with the relevant values of $\te$ yields the following four equations:
\begin{equation}\label{e-2matrixequations}
    \begin{aligned}
    c_{-,1}&=c_{+,1}\frac{m_l}{p_l}+c_{+,2}\frac{\tau_le^{-i\w(2\la+1)}}{p_l}, \\
    c_{-,2}&=c_{+,1}\frac{\tau_le^{i\w(2\la+1)}}{p_l}+c_{+,2}\frac{m_l}{p_l}, \\
    c_{-,1}&=c_{+,1}\frac{m_re^{-i2\pi\la}}{p_r}-c_{+,2}\frac{\tau_re^{i[\w(2\la+1)-2\pi\la]}}{p_r}, \\
    c_{-,2}&=-c_{+,1}\frac{\tau_re^{-i[\w(2\la+1)-2\pi\la]}}{p_r}+c_{+,2}\frac{m_re^{i2\pi\la}}{p_r}.
\end{aligned}
\end{equation}

Recall the notation given in equation \eqref{e-pm}. This system of equations can then be used to find eigenvalues of the operator $J$.

\begin{prop}\label{p-eigenvalues2}
Let $\tau_l,\tau_r\neq\pm2$, $\w\in(0,\pi)$ and $J$ be given by equation \eqref{e-angular}. Then $\la$ is an eigenvalue of $J-1/2$
if and only if it satisfies the equation
\begin{equation}\label{e-2eigenvalues}
\begin{aligned}
1-\frac{m_lm_r\cos(2\pi\la)}{p_lp_r}-\frac{\tau_l\tau_r\cos{[2\w(2\la+1)-2\pi\la]}}{p_lp_r}=0.
\end{aligned}
\end{equation}
\end{prop}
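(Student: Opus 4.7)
The plan is to observe that any eigenvector $\f\in\dom J$ for the eigenvalue $\widetilde\la = \la + 1/2$ must, on each of the arcs $\II_\pm$, be of the form given in equation~\eqref{e-genericeigenfunction}; the four constants $c_{\pm,1},c_{\pm,2}\in\CC$ are then free parameters, so $\la$ is an eigenvalue of $J-1/2$ precisely when the four boundary equations \eqref{e-2matrixequations} admit a non-trivial solution. Thus the statement is equivalent to showing that the $4\times 4$ coefficient matrix of that homogeneous system is singular exactly when \eqref{e-2eigenvalues} holds.

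First, I would eliminate $c_{-,1},c_{-,2}$ by equating the first and third of the equations in \eqref{e-2matrixequations} and the second and fourth. This produces a $2\times 2$ homogeneous system in $(c_{+,1},c_{+,2})$ whose coefficient matrix, after multiplying through by $p_lp_r$, reads
\[
\begin{pmatrix}
m_lp_r - m_rp_l e^{-i2\pi\la} & \tau_lp_r e^{-i\w(2\la+1)} + \tau_rp_l e^{i[\w(2\la+1)-2\pi\la]} \\[2pt]
\tau_lp_r e^{i\w(2\la+1)} + \tau_rp_l e^{-i[\w(2\la+1)-2\pi\la]} & m_lp_r - m_rp_l e^{i2\pi\la}
\end{pmatrix}.
\]
A non-trivial $(c_{+,1},c_{+,2})$ exists if and only if the determinant of this matrix vanishes; conversely, any such solution determines $(c_{-,1},c_{-,2})$ uniquely via the first two equations in \eqref{e-2matrixequations}, so this determinantal condition is equivalent to $\la$ being an eigenvalue of $J-1/2$.

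Expanding the determinant, the diagonal product contributes $m_l^2p_r^2 + m_r^2p_l^2 - 2m_lm_rp_lp_r\cos(2\pi\la)$ and the off-diagonal product contributes $\tau_l^2p_r^2 + \tau_r^2p_l^2 + 2\tau_l\tau_rp_lp_r\cos(2\w(2\la+1)-2\pi\la)$. The key algebraic simplification, and the only non-trivial step, is to use the identity $m_j^2 - \tau_j^2 = p_j^2$ for $j=l,r$, which follows from $\eta_l=\eta_r=0$ (so $\eps_j = -\tau_j^2$) combined with \eqref{e-pm}. Applying it to the pairs $m_l^2p_r^2 - \tau_l^2p_r^2$ and $m_r^2p_l^2 - \tau_r^2p_l^2$ collapses the real-squared terms to $2p_l^2p_r^2$. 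Dividing the resulting equation by $2p_l^2p_r^2$, which is non-zero thanks to the standing assumption $\tau_l,\tau_r\ne\pm 2$, produces \eqref{e-2eigenvalues}.

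The only genuine obstacle is spotting the identity $m_j^2 - \tau_j^2 = p_j^2$; without it, the determinant looks irreducibly complicated. Once this is in hand, the argument is a direct calculation, and the restriction $\tau_l,\tau_r\ne\pm 2$ is exactly what is needed both to make the boundary matrices $M_l,M_r$ well defined and to allow division by $p_lp_r$ in the final step.
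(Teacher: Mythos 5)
Your proposal is correct and takes essentially the same approach as the paper: the paper sets up the $4\times 4$ determinant from \eqref{e-2matrixequations} and simply states that ``a calculation yields the desired expression'', while you carry out that calculation by explicitly eliminating $c_{-,1},c_{-,2}$ to reach the equivalent $2\times 2$ determinant (which is what expanding the $4\times 4$ block matrix does anyway, since its left $4\times 2$ block is $\bigl(\begin{smallmatrix} I \\ I\end{smallmatrix}\bigr)$). The identification of the identity $m_j^2-\tau_j^2=p_j^2$ (i.e.\ $(1+\tau_j^2/4)^2-\tau_j^2=(1-\tau_j^2/4)^2$) as the collapsing step is exactly the right simplification, and your handling of the ``if and only if'' and of the need for $\tau_l,\tau_r\ne\pm2$ to divide by $p_lp_r$ is sound.
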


\begin{proof}
The system of equations given by equation \eqref{e-2matrixequations} has a solution if and only if the following determinant vanishes
\begin{align*}
    \begin{vmatrix}
    1 & 0 & -\frac{m_l}{p_l} & -\frac{\tau_le^{-i\w(2\la+1)}}{p_l} \\
    0 & 1 & -\frac{\tau_le^{i\w(2\la+1)}}{p_l} & -\frac{m_l}{p_l} \\
    1 & 0 & -\frac{m_re^{-i2\pi\la}}{p_r} & \frac{\tau_re^{i[\w(2\la+1)-2\pi\la]}}{p_r} \\
    0 & 1 & \frac{\tau_re^{-i[\w(2\la+1)-2\pi\la]}}{p_r} & -\frac{m_re^{i2\pi\la}}{p_r}
    \end{vmatrix}
    =0.
\end{align*}

%%%%%%%%%%%%%%%
% Dale, the last line of the matrix is obviously wrong, it needs a $-2\pi\la$ factor in the exponent....
%%%%%%%%%%%%%%%

A calculation of this determinant yields the desired expression.
\end{proof}

\begin{rem}\label{rem:symmetry}
Note that writing the boundary conditions from equation \eqref{e-betterbcs} in the alternate form $u_+|_{\Gamma^j}=\widetilde{M}_ju_-|_{\Gamma^j}$ for $j=l,r$ and matrix $\widetilde{M}_j$ makes no difference in the calculated determinant. Also, it should be clear that the choice of normal vectors will not influence any results, as the setups are unitarily equivalent after a rotation. It also follows that the spectrum of $J$ for $\w=\wt{\w}\in(0,\pi)$ coincides with that for $\w=\pi-\wt\w$, which reflects the fact that the respective problem is symmetric about $\w=\pi/2$.
\hfill$\diamondsuit$
\end{rem}

\begin{cor}\label{c-2specsymmetric}
For any choice of $\w\in(0,\pi)$ and $\tau_l,\tau_r\neq\pm 2$, the eigenvalues of $J$ are symmetric about the origin.
\end{cor}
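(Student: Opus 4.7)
The plan is to deduce the symmetry directly from the transcendental characterization of the spectrum obtained in Proposition~\ref{p-eigenvalues2}. Since eigenvalues of $J$ are of the form $\widetilde\la = \la + 1/2$ where $\la$ ranges over the zeros of
\[
F(\la) := 1 - \frac{m_l m_r \cos(2\pi\la)}{p_l p_r} - \frac{\tau_l\tau_r \cos\bigl[2\w(2\la+1) - 2\pi\la\bigr]}{p_l p_r},
\]
symmetry of the spectrum of $J$ about $0$ is equivalent to symmetry of the zero set of $F$ about $-1/2$. So the plan is to verify the functional identity $F(-\la - 1) = F(\la)$ for all $\la$, which is the forward-looking statement to check.

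First, I would handle the easy term: since $\cos$ is even, $\cos(2\pi(-\la-1)) = \cos(-2\pi\la - 2\pi) = \cos(2\pi\la)$, so the middle term is preserved. Next, I would compute the argument of the remaining cosine under $\la \mapsto -\la - 1$:
\[
2\w\bigl(2(-\la-1)+1\bigr) - 2\pi(-\la-1) = -\bigl[2\w(2\la+1) - 2\pi\la\bigr] + 2\pi,
\]
and conclude, by $2\pi$-periodicity and evenness of $\cos$, that this cosine is also unchanged. Hence $F(-\la-1)=F(\la)$, and the corollary follows immediately upon shifting by $1/2$.

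There is no serious obstacle here; the only point needing a little care is to track the correct affine symmetry (about $-1/2$ for the shifted operator $J-1/2$, equivalently about $0$ for $J$), so that the algebraic check targets the right substitution. Once the substitution $\la \mapsto -\la - 1$ is made, the identity is a one-line manipulation using evenness and periodicity of cosine. One could alternatively look for a conceptual explanation via an antiunitary involution acting on $L^2(\SS^1,\CC^2)$ that anticommutes with $J - 1/2$ (in the spirit of the involution mentioned for $\fD$ in the introduction), but since the computational route is short and transparent, I would present only that route and remark, if space allows, that this symmetry is the spectral shadow of such an involution.
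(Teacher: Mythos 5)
Your proof is correct and follows essentially the same route as the paper: both rely on the symmetry of the spectral condition from Proposition~\ref{p-eigenvalues2}, with the paper's one-line observation that the equation rewritten in $\widetilde\la$ is invariant under $\widetilde\la\mapsto -\widetilde\la$ being precisely your identity $F(-\la-1)=F(\la)$. Your version just spells out the cosine manipulations, which the paper leaves implicit.
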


\begin{proof}
Rewrite equation \eqref{e-2eigenvalues} in terms of $\widetilde{\la}$ via the substitution  $\la=\widetilde{\la}-1/2$. It is apparent that $\widetilde{\la}$ is a solution to this rewritten equation
if and only if $-\widetilde{\la}$ is a solution for the same values of $\w$, $\tau_l$ and $\tau_r$.
\end{proof}

The matter of determining the deficiency indices for $\fD$ will boil down to determining how many eigenvalues of $J$ lie in the interval $(-1/2,1/2)$ with multiplicities taken into account. Before progressing any further, a detailed examination of the equation \eqref{e-2eigenvalues} is conducted to determine if the number of eigenvalues in this interval can be obtained. It is possible to give a rough upper bound on the number of eigenvalues of $J$ in the interval $(-1/2,1/2)$ in order to help frame this analysis.

\begin{prop}\label{prop:def_bnd}
	Assume that $\tau_l,\tau_r\in\RR\setminus\{-2,0,2\}$
	and $\w\in(0,\pi)$.
	Then the number of eigenvalues counted with multiplicities of the operator $J$ lying in the interval $(-1/2,1/2)$ is at most four. 
\end{prop}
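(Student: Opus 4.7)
The plan is to reuse the identification of $J-\tfrac12$ with a momentum operator on a metric graph carried out in the proof of Proposition~\ref{p-momentum2}, and then to count the eigenvalues via the resulting secular equation. On each of the four directed edges of lengths $L_1=L_2=2\w$, $L_3=L_4=2\pi-2\w$, the edge equation $-i\psi_j'=\la\psi_j$ integrates to $\psi\ti{in}=D(\la)\psi\ti{out}$ with $D(\la):=\mathrm{diag}(e^{iL_j\la})_{j=1}^{4}$. Combining this with the matching rule $\cU\psi\ti{in}=\psi\ti{out}$ from Proposition~\ref{p-momentum2} reformulates the eigenvalue problem $J\f=(\la+\tfrac12)\f$ as
\[
\det\!\bigl(\cU D(\la)-I\bigr)=0,
\]
and the geometric multiplicity of $\la$ coincides with $\dim\ker\bigl(\cU D(\la)-I\bigr)$.

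Next I would track the four eigenvalues $e^{i\phi_k(\la)}$, $k=1,\dots,4$, of the unitary matrix $\cU D(\la)$ through continuous real lifts $\phi_k$. A Hellmann--Feynman-style differentiation of $\cU D(\la)v_k=e^{i\phi_k(\la)}v_k$, together with unitarity of $\cU D(\la)$, gives the formula
\[
\phi_k'(\la)=\sum_{j=1}^{4}L_j\,\bigl|(\cU^{*}v_k(\la))_j\bigr|^{2},
\]
where $v_k(\la)$ is a unit eigenvector. Since $\w\in(0,\pi)$ makes $0<L_j<2\pi$, and $\cU^{*}v_k$ is a unit vector, this forces $0<\phi_k'(\la)<2\pi$ pointwise. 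Hence on any interval of length one (in particular on $(-1,0)$) each phase $\phi_k$ is strictly increasing with total increment strictly smaller than $2\pi$, so the equation $\phi_k(\la)\in 2\pi\Z$ admits at most one solution there. Summing over $k=1,\dots,4$ bounds the total multiplicity of solutions of the secular equation in $(-1,0)$ by $4$, and the substitution $\widetilde\la=\la+\tfrac12$ converts this into the claimed bound on eigenvalues of $J$ in $(-\tfrac12,\tfrac12)$.

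The main delicate point is handling eigenvalue crossings of the analytic unitary family $\cU D(\la)$: at isolated crossings the individual eigenvectors $v_k$ are not jointly analytic in $\la$, but standard results on analytic families of unitary (or self-adjoint) operators \`a la Kato guarantee that the four eigenvalues can be labelled continuously as a multiset and that each continuous phase branch remains strictly increasing with derivative in $[\min_j L_j,\max_j L_j]\subset(0,2\pi)$. Once this technical point is granted, the counting argument is immediate, and it also explains the appearance of the numerical value $4=2N$ predicted by~\eqref{eq:bnd} in the case $N=2$.
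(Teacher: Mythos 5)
Your proof is correct, but it takes a genuinely different route from the paper's. The paper derives this bound as the $N=2$ special case of Proposition~\ref{prop:def_bnd2}, whose proof is an abstract extension-theoretic argument: the minimal symmetric operator $A=-i\sigma_3\partial_\te+\tfrac12$ with Dirichlet-type boundary conditions at all corners has deficiency indices $(2N,2N)$, the periodic self-adjoint extension $B$ on $H^1(\SS^1,\CC^2)$ has spectrum $\ZZ+\tfrac12$ and hence no eigenvalues in $(-\tfrac12,\tfrac12)$, and then the interlacing theorem for self-adjoint extensions (\cite[Satz 10.18]{W00}) immediately yields that any other extension, in particular $J_N$, has at most $2N$ eigenvalues there. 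Your argument instead works concretely inside the quantum-graph picture already introduced in Proposition~\ref{p-momentum2}: you form the bond propagator $D(\la)=\mathrm{diag}(e^{iL_j\la})$, reduce the eigenvalue condition to $\det(\cU D(\la)-I)=0$, and bound the number of solutions with multiplicity via the strict monotonicity $\phi_k'(\la)=\sum_j L_j|(\cU^*v_k)_j|^2\in(0,2\pi)$ of the phases of the analytic unitary family $\cU D(\la)$. Both proofs give $2N$ and both generalize to arbitrary $N$; the paper's version is a one-liner once the right extension-theoretic lemma is invoked, while yours is self-contained and closer in spirit to the standard phase-winding eigenvalue count for quantum graphs (à la Kottos--Smilansky/Berkolaiko--Kuchment). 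The only point that needs to be solidly justified in your route is that eigenphases of an analytic family of unitary matrices can be followed analytically through crossings; this does hold (the unit-circle constraint rules out Puiseux branching, exactly as the reality constraint does in Rellich's theorem for Hermitian families), but it is worth stating the precise reference or giving the short argument, since the Hellmann--Feynman formula as written presupposes a differentiable eigenvector branch.
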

The above proposition is a special case of Proposition \ref{prop:def_bnd2}, which was formulated for the general star-graph with $N$ edges and therefore appears later in the manuscript.

%%%%%%%%%%%%%%%%%%%%%%%%%%%%%
%%%%%%%%%%%%%%%%%%%%%%%%%%%%%
\subsection{Example: equal interaction strengths on each edge}\label{ss-twoleads}
%%%%%%%%%%%%%%%%%%%%%%%%%%%%%
%%%%%%%%%%%%%%%%%%%%%%%%%%%%% 

The assumption that the interaction strengths are related via $\tau_r=a\tau_l$, for some $a\in\RR$, can be made without loss of generality. Unfortunately, this level of generality still leaves it very difficult to solve equation \eqref{e-2eigenvalues} for any single variable and we are forced to restrict ourselves to specific values of $a$ which offer further simplifications. In this subsection, we assume that the interactions on both edges are equal, i.e.~ $\tau_l=\tau_r=\tau$ so that $a=1$, and try to determine how many eigenvalues of $J-1/2$ lie in $(-1,0)$ without considering the multiplicities. The analysis of multiplicities is subtle and therefore postponed to Subsection~\ref{ss-orthowedge}.

This scenario is also discussed in \cite{PVDB} for $C^2$ domains with a single straight-edged corner; see, in particular, Proposition 3.3 therein where a similar conclusion is reached. 
Subsection \ref{ss-twoleads2} will discuss the case where $a=-1$. Recall that we tacitly assume $\eta_l=\eta_r=0$ and $\tau\neq\pm2$.

After some rearranging, equation \eqref{e-2eigenvalues} becomes
\begin{align}\label{e-2equal}
(4-\tau^2)^2-(4+\tau^2)^2\cos(2\pi\la)-16
\tau^2\cos[2(\pi\la-2\w\la-\w)]=0.
\end{align}
The left-hand side of this equation can be viewed as function of $\la$ with other parameters fixed, and its roots analyzed to determine how many eigenvalues lie in the interval $(-1,0)$. 

\begin{prop}\label{p-2equal}
Given the above assumptions, for all $\tau\in\RR\setminus\{-2,0,2\}$ and $\w\in(0,\pi)$ such that $\w\neq\pi/2$ there exists one $\la_1\in(-1,-1/2)$ and one $\la_2\in(-1/2,0)$ that satisfy equation \eqref{e-2equal} while $\la=-1/2$ does not satisfy~\eqref{e-2equal}. For $\w=\pi/2$, there are no solutions of \eqref{e-2equal} in the interval $(-1,0)$.
\end{prop}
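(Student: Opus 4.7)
The plan is to treat the case $\w=\pi/2$ separately and then factor the left-hand side of~\eqref{e-2equal} so that each factor can be analyzed individually. Since \eqref{e-2equal} depends on $\tau$ only through $\tau^2$, it suffices to treat $\tau>0$ with $\tau\neq 2$. For $\w=\pi/2$ the argument of the last cosine collapses to $-\pi$, and~\eqref{e-2equal} simplifies, using $(4-\tau^2)^2+16\tau^2=(4+\tau^2)^2$, to $\cos(2\pi\la)=1$; this forces $\la\in\ZZ$ and rules out the open interval $(-1,0)$.

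For $\w\neq\pi/2$, I would combine $(4-\tau^2)^2=(4+\tau^2)^2-16\tau^2$ with the half-angle identities $1-\cos x=2\sin^2(x/2)$ and $1+\cos x=2\cos^2(x/2)$ to recast the left-hand side of \eqref{e-2equal} as $2\bigl[(4+\tau^2)^2\sin^2(\pi\la)-16\tau^2\cos^2((\pi-2\w)\la-\w)\bigr]$. As a difference of squares this factors as $2\,g_+(\la)\,g_-(\la)$, where
\[
g_\pm(\la):=(4+\tau^2)\sin(\pi\la)\pm 4\tau\cos((\pi-2\w)\la-\w).
\]
The task then becomes showing that each of $g_+$, $g_-$ has exactly one zero in $(-1,0)$, that these two zeros lie in different halves of the interval, and that neither equals $-1/2$.

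The crucial simplification is the substitution $s:=\la+1/2\in(-1/2,1/2)$. One checks that $\sin(\pi\la)=-\cos(\pi s)$ and $\cos((\pi-2\w)\la-\w)=\sin(2\delta s)$, where I set $\delta:=\pi/2-\w\in(-\pi/2,\pi/2)\setminus\{0\}$. Dividing by $\cos(\pi s)>0$, the equation $g_\pm=0$ becomes
\[
R(s):=\frac{\sin(2\delta s)}{\cos(\pi s)}=\pm\frac{4+\tau^2}{4\tau}.
\]
Because $(4+\tau^2)/(4\tau)>1$ strictly (as $\tau\neq\pm 2$) while $R(0)=0$, the point $s=0$ (i.e.~$\la=-1/2$) is immediately excluded. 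The main technical step is strict monotonicity of $R$ on $(-1/2,1/2)$: a direct computation combined with sum-to-product identities reduces the numerator of $R'(s)\cos^2(\pi s)$ to $2\delta\cos(\pi s)\cos(2\delta s)+\pi\sin(\pi s)\sin(2\delta s)$. On $(-1/2,1/2)$, both $\cos(\pi s)$ and $\cos(2\delta s)$ are positive (the latter since $|\delta|<\pi/2$ gives $|2\delta s|<\pi/2$), while $\sin(\pi s)$ and $\sin(2\delta s)$ carry the sign of $s$ when $\delta>0$ and carry opposite signs when $\delta<0$; hence the displayed expression has the sign of $\delta$ throughout the open interval. This sign analysis is where the hypothesis $\w\in(0,\pi)\setminus\{\pi/2\}$ genuinely enters and is the main obstacle of the plan.

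Strict monotonicity, together with $\cos(\pi s)\to 0^+$ at $s=\pm 1/2$ and $\sin(2\delta\cdot(\pm 1/2))=\pm\sin(\delta)\neq 0$, implies that $R$ is a bijection from $(-1/2,1/2)$ onto $\RR$. Each of the two equations $R(s)=\pm(4+\tau^2)/(4\tau)$ therefore has a unique solution, and since the two target values have opposite signs while $R$ is monotonic with $R(0)=0$, the two solutions lie on opposite sides of $s=0$. Translating back through $\la=s-1/2$ yields exactly one solution of \eqref{e-2equal} in each of $(-1,-1/2)$ and $(-1/2,0)$, which is the desired conclusion.
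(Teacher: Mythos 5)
Your proof is correct, and it takes a genuinely different route from the paper's. The paper exploits the symmetry $F_{\tau,\w}(-1-\la)=F_{\tau,\w}(\la)$, checks the signs of $F_{\tau,\w}$ at $\la=-1,-1/2,0$ to conclude that the number of zeros in $(-1/2,0)$ is odd, and then invokes the operator-theoretic bound from Proposition~\ref{prop:def_bnd2} (proved via Weidmann's comparison of deficiency indices for self-adjoint extensions) to cap that count at two, yielding exactly one. Your argument instead factors $F_{\tau,\w}=2\,g_+g_-$ through a difference-of-squares identity, changes variables by $s=\la+1/2$, and shows each factor's zero condition is $R(s)=\pm\tfrac{4+\tau^2}{4\tau}$ with $R(s)=\sin(2\delta s)/\cos(\pi s)$; the sign analysis of $R'(s)\cos^2(\pi s)=2\delta\cos(\pi s)\cos(2\delta s)+\pi\sin(\pi s)\sin(2\delta s)$ and the boundary asymptotics then make $R$ a monotone bijection from $(-1/2,1/2)$ onto $\RR$, so each factor has a unique zero in $(-1,0)$ and these lie on opposite sides of $-1/2$. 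The trade-offs: your approach is self-contained and elementary, avoids the appeal to the perturbative upper bound, and makes the role of $\w\ne\pi/2$ transparent (the sign of $\delta$ is exactly what drives monotonicity). The paper's approach is shorter once Proposition~\ref{prop:def_bnd2} is available, and it generalizes more readily to cases like Propositions~\ref{p-2opposite} and~\ref{p-3equal} where the analogous factorization is less clean. One small stylistic note: the computation of $R'$ is just the quotient rule; the "sum-to-product identities" you mention are not actually needed there. You might also state explicitly (though it is easy) that $g_+$ and $g_-$ share no zero in $(-1,0)$ because a common zero would force $\sin(\pi\la)=0$, i.e. $\la\in\ZZ$; this guarantees the two zeros of the product are distinct.
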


\begin{proof}
	First, consider the case $\w=\pi/2$. Equation~\eqref{e-2equal} reduces to $\cos(2\pi\la)=1$ and it is obvious that there are no solutions in the interval $(-1,0)$. Then, in light of Remark \ref{rem:symmetry}, it is sufficient to show the result for $\w\in(0,\pi/2)$.
	Consider the function
	\[
		F_{\tau,\omega}(\lambda) := (4-\tau^2)^2 - (4+\tau^2)^2\cos(2\pi\lambda)-16\tau^2\cos(2\pi\lambda-4\omega\lambda-2\omega),
	\]
	and notice the condition~\eqref{e-2equal} is equivalent to $F_{\tau,\omega}(\lambda) = 0$. It is also easy to see that 
	\[
		F_{\tau,\omega}(-1-\lambda)=F_{\tau,\omega}(\lambda),
	\]
	and hence $F_{\tau,\omega}(\lambda)$ is symmetric with respect to the point $\lambda = -1/2$. 
	Furthermore,
	\[
		F_{\tau,\omega}(-1) = 		F_{\tau,\omega}(0) = (4-\tau^2)^2-(4+\tau^2)^2-16\tau^2\cos(2\omega) = -16\tau^2(1+\cos(2\omega)) < 0.
	\]
	and
	\[
		F_{\tau,\omega}(-1/2) = (4-\tau^2)^2+(4+\tau^2)^2 + 16\tau^2 >0.
	\]
	This means the number of zeros of the function $F_{\tau,\omega}$ on the interval $(-1/2,0)$ is odd. But symmetry and Proposition~\ref{prop:def_bnd2} imply that $F_{\tau,\omega}$ has a maximum of two zeros in the interval $(-1/2,0)$. We conclude that $F_{\tau,\omega}$ has exactly one zero in $(-1/2,0)$.
\end{proof}

%%%%%%%%%%%%%%%%%%%%%%%%%%%%%
%%%%%%%%%%%%%%%%%%%%%%%%%%%%%
\subsection{Example: interaction strengths
	of opposite signs on the edges}\label{ss-twoleads2}
%%%%%%%%%%%%%%%%%%%%%%%%%%%%%
%%%%%%%%%%%%%%%%%%%%%%%%%%%%% 

Another case that allows for simplification of equation \eqref{e-2eigenvalues} is when the singular interaction strength is the same on both edges but the normal vector from equation \eqref{e-polardirac} is flipped on one edge to point inwards towards $\Omega_+$. It can be calculated that this is equivalent to preserving the former orientation of the normal and negating the strength of the singular interaction on the chosen edge. A similar situation for Dirac operators with infinite-mass boundary conditions was analyzed in \cite{CL}. Without loss of generality, we choose this edge to be $\Gamma^r$ and set $-\tau_r =\tau_l = \tau$, as opposed to the previous example. Analogously to equation \eqref{e-2equal}, it is important to determine how many values $\la\in(-1,0)$ satisfy
\begin{align}\label{e-2opposite}
(4-\tau^2)^2-(4+\tau^2)^2\cos(2\pi\la)+16\tau^2\cos[2(\pi\la-2\w\la-\w)]=0.
\end{align}

\begin{prop}\label{p-2opposite}
Given the above assumptions, for all $\tau\in\RR\setminus\{-2,0,2\}$ and $\w\in(0,\pi)$ there exists one $\la_1\in(-1,-1/2)$ and one $\la_2\in(-1/2,0)$ that satisfy equation \eqref{e-2opposite},
while $\la =-1/2$ does not satisfy~\eqref{e-2opposite}.
\end{prop}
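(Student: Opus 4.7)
The plan is to follow the template of the proof of Proposition~\ref{p-2equal} with the sign flip of the last summand tracked carefully. Define
\[
G_{\tau,\omega}(\lambda) := (4-\tau^2)^2 - (4+\tau^2)^2\cos(2\pi\lambda) + 16\tau^2\cos(2\pi\lambda - 4\omega\lambda - 2\omega),
\]
so that condition~\eqref{e-2opposite} is equivalent to $G_{\tau,\omega}(\lambda) = 0$. First I would check, using $2\pi$-periodicity and evenness of cosine, that $G_{\tau,\omega}(-1-\lambda) = G_{\tau,\omega}(\lambda)$; this symmetry is identical to the one established for $F_{\tau,\omega}$ in Proposition~\ref{p-2equal} and reduces the analysis to one half-interval.

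Next I would compute the three boundary values. Using $\cos(2\pi) = 1$ and the identity $(4-\tau^2)^2-(4+\tau^2)^2 = -16\tau^2$, I expect
\[
G_{\tau,\omega}(0) = G_{\tau,\omega}(-1) = -16\tau^2\bigl(1-\cos(2\omega)\bigr),
\]
and using $\cos(-\pi) = -1$ together with the exact cancellation of the $\omega$-dependent pieces at $\lambda=-1/2$,
\[
G_{\tau,\omega}(-1/2) = (4-\tau^2)^2 + (4+\tau^2)^2 - 16\tau^2 = 2(\tau^2-4)^2.
\]
For $\omega\in(0,\pi)$ the factor $1-\cos(2\omega)$ is strictly positive, so $G_{\tau,\omega}(0)<0$ whenever $\tau\ne0$; and since $\tau\ne\pm 2$, the closed form $2(\tau^2-4)^2$ yields $G_{\tau,\omega}(-1/2)>0$, which also immediately gives the second claim that $\lambda=-1/2$ is not a root. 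A small but important point is that, unlike in Proposition~\ref{p-2equal}, no subcase for $\omega=\pi/2$ is required: there $1-\cos(\pi)=2$ and the endpoint values retain the required signs; this corresponds to the geometric fact that a straight line with opposite interaction strengths still produces one eigenvalue in each half-interval.

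Finally I would close with the intermediate value theorem combined with the upper bound from Proposition~\ref{prop:def_bnd2}. The sign pattern $G_{\tau,\omega}(-1)<0$, $G_{\tau,\omega}(-1/2)>0$, $G_{\tau,\omega}(0)<0$ forces an odd number of zeros on each of $(-1,-1/2)$ and $(-1/2,0)$. On the other hand, Proposition~\ref{prop:def_bnd2} caps the total number of zeros in $(-1,0)$ at four, and the symmetry about $-1/2$ then forces at most two in each half-interval, so there is exactly one root $\lambda_1\in(-1,-1/2)$ and exactly one $\lambda_2\in(-1/2,0)$.

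The main obstacle I anticipate is the bookkeeping in the trigonometric simplification for $G_{\tau,\omega}(-1/2)$; the cleaner factorization $2(\tau^2-4)^2$ (rather than a sum of two squares plus a mixed term, as in the equal-strength case) is what allows the exclusion $\tau\ne\pm 2$ to directly guarantee non-vanishing at $\lambda=-1/2$. Once the three endpoint values are established, the rest is an immediate adaptation of Proposition~\ref{p-2equal}.
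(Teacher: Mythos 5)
Your proof is correct and follows essentially the same route as the paper: same auxiliary function $G_{\tau,\omega}$, same symmetry $G_{\tau,\omega}(-1-\lambda)=G_{\tau,\omega}(\lambda)$, same endpoint sign computations, and the same conclusion via the intermediate value theorem combined with the bound from Proposition~\ref{prop:def_bnd2}. The one (harmless) discrepancy is cosmetic: the paper opens the proof by invoking Remark~\ref{rem:symmetry} to assume $\omega\in(0,\pi/2]$, but as you correctly observe this reduction is unnecessary here since $1-\cos(2\omega)>0$ on all of $(0,\pi)$, so the sign pattern holds uniformly — a slight streamlining over the published argument.
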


\begin{proof}
Assume without loss of generality, by Remark~\ref{rem:symmetry}, that $\w\in(0,\pi/2]$ and consider the function
	\[
		G_{\tau,\omega}(\lambda) := (4-\tau^2)^2 - (4+\tau^2)^2\cos(2\pi\lambda)+16\tau^2\cos(2\pi\lambda-4\omega\lambda-2\omega).
	\]
	The condition~\eqref{e-2opposite} is equivalent to $G_{\tau,\omega}(\lambda) = 0$. Notice that $G_{\tau,\omega}(-1-\lambda) = G_{\tau,\omega}(\lambda)$ and hence the function $G_{\tau,\omega}$ is symmetric with respect to the point $\lambda=-1/2$. 
	Furthermore,
	\[
	G_{\tau,\omega}(-1) = 		G_{\tau,\omega}(0) = (4-\tau^2)^2-(4+\tau^2)^2+16\tau^2\cos(2\omega) = -16\tau^2\big(1-\cos(2\omega)\big) < 0,
	\]
	and
	\[
	G_{\tau,\omega}(-1/2) = (4-\tau^2)^2+(4+\tau^2)^2 - 16\tau^2 =
	2(16-8\tau^2+
	\tau^4) = 2(4-\tau^2)^2 >0.
	\]
	The number of zeros of the function $G_{\tau,\omega}$ on the interval $(-1/2,0)$ is therefore odd. Again, symmetry and the bound in Proposition~\ref{prop:def_bnd2} imply that the number of zeros of $G_{\tau,\omega}$ on the interval $(-1/2,0)$ is no greater than two. The result follows.
\end{proof}

%%%%%%%%%%%%%%%%%%%%%%%%%%%%
%%%%%%%%%%%%%%%%%%%%%%%%%%%%
\subsection{Example: interaction supported on a ray}\label{ss-slit}
%%%%%%%%%%%%%%%%%%%%%%%%%%%%
%%%%%%%%%%%%%%%%%%%%%%%%%%%%

It is also possible to analyze the very degenerate case where the interaction strength on one edge is set to $0$, so that the situation collapses to just one ray. While the analysis of this non-closed curve with a Lorentz-scalar interaction is degenerate in the context of this manuscript and geometric setup, it has not been previously analyzed by any other method. Without loss of generality, we choose $\tau_r=0$ in order to have the notation as simple as possible. Equation \eqref{e-2eigenvalues} then reduces to 
\begin{align}\label{e-ray}
1-\frac{m_l\cos(2\pi\la)}{p_l}=0,
\end{align}
and allows for the number of eigenvalues in the appropriate intervals to be found. 

\begin{prop}\label{p-slit}
For $\tau_r = 0$, $\tau_l\in\RR\setminus\{-2,0,2\}$ and $\w\in(0,\pi)$, there exists one $\la_1\in(-1,-1/2)$ and one $\la_2\in(-1/2,0)$ that satisfy equation \eqref{e-ray}, while $\la=-1/2$ does not satisfy equation \eqref{e-ray}.
\end{prop}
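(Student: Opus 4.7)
The plan is to reduce equation \eqref{e-ray} to a trigonometric equation of the form $\cos(2\pi\lambda) = c$ for an explicit constant $c = c(\tau_l)$, and then use elementary monotonicity of cosine on half-periods.

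First I would unfold the definitions from \eqref{e-pm}. Since we are in the Lorentz-scalar-only setting with $\eta_l = 0$, we have $\varepsilon_l = -\tau_l^2$, hence
\[
p_l = 1 - \frac{\tau_l^2}{4} = \frac{4 - \tau_l^2}{4}, \qquad m_l = 1 + \frac{\tau_l^2}{4} = \frac{4 + \tau_l^2}{4}.
\]
Because $\tau_l \ne \pm 2$, the factor $p_l$ is nonzero, so equation \eqref{e-ray} is equivalent to
\[
\cos(2\pi\lambda) = \frac{p_l}{m_l} = \frac{4 - \tau_l^2}{4 + \tau_l^2} =: c.
\]

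Next I would verify $c \in (-1, 1)$ strictly. Since $\tau_l \ne 0$, one has $4 - \tau_l^2 < 4 + \tau_l^2$, giving $c < 1$; and since $4 + \tau_l^2 > 0$ always, the inequality $c > -1$ is equivalent to $-(4 + \tau_l^2) < 4 - \tau_l^2$, i.e.\ $-4 < 4$, which holds. This immediately rules out $\lambda = -1/2$ as a solution, because $\cos(-\pi) = -1 \ne c$.

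Finally, I would exhibit exactly one root in each of $(-1,-1/2)$ and $(-1/2,0)$ by monotonicity. On the sub-interval $\lambda \in (-1/2, 0)$ the argument $2\pi\lambda$ ranges in $(-\pi, 0)$, and $\cos$ is strictly increasing there from $-1$ to $1$; since $c \in (-1,1)$, the intermediate value theorem together with this monotonicity gives a unique $\lambda_2 \in (-1/2, 0)$ with $\cos(2\pi\lambda_2) = c$. Likewise, on $\lambda \in (-1, -1/2)$ the argument $2\pi\lambda$ ranges in $(-2\pi, -\pi)$, where $\cos$ is strictly decreasing from $1$ down to $-1$, yielding a unique $\lambda_1 \in (-1, -1/2)$ with $\cos(2\pi\lambda_1) = c$. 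Explicitly $\lambda_2 = -\arccos(c)/(2\pi)$ and $\lambda_1 = -1 + \arccos(c)/(2\pi)$, but no explicit formula is really needed. There is essentially no obstacle here; the argument is a one-paragraph application of monotonicity, and the only sliver of care is confirming the strict bound $|c| < 1$ using the exclusions $\tau_l \notin \{0, \pm 2\}$.
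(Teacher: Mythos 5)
Your proposal is correct and follows essentially the same route as the paper: reduce \eqref{e-ray} to $\cos(2\pi\lambda) = (4-\tau_l^2)/(4+\tau_l^2)$ and read off the two roots $\lambda_2 = -\arccos(c)/(2\pi)$ and $\lambda_1 = -1 + \arccos(c)/(2\pi)$, which are exactly the solutions the paper records. You add a bit more scaffolding (the explicit check $|c|<1$ and the monotonicity argument on each half-period), which the paper leaves implicit, but the underlying argument is identical.
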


\begin{proof}
Finding a family of solutions for equation \eqref{e-ray} is not difficult, and clearly does not depend on $\w$. For $\tau_l\neq-2,0,2$, the two solutions
\begin{align*}
\la_1&=-1+\arccos{\left(\frac{4-\tau_l^2}{4+\tau_l^2}\right)}/2\pi, \\
\la_2&=-\arccos{\left(\frac{4-\tau_l^2}{4+\tau_l^2}\right)}/2\pi,
\end{align*}
yield eigenvalues in the intervals $(-1,-1/2)$ and $(-1/2,0)$, respectively. Note that the value $\la=-1/2$ can be achieved by setting $\tau_l=0$, but the situation then corresponds to the free Dirac operator in $\RR^2$.
\end{proof}

%%%%%%%%%%%%%%%%%%%%%%%%%%%%
%%%%%%%%%%%%%%%%%%%%%%%%%%%%
\subsection{Orthogonal decomposition for the broken line}\label{ss-orthowedge}
%%%%%%%%%%%%%%%%%%%%%%%%%%%%
%%%%%%%%%%%%%%%%%%%%%%%%%%%%

In this subsection, we decompose the Dirac operator $\fD$ with Lorentz-scalar $\delta$-shell interactions supported on a broken line into the orthogonal sum of half-line Dirac operators with off-diagonal Coulomb-like potentials. This orthogonal decomposition then allows for the deficiency indices to be computed in terms of the spectrum of the spin-orbit operator $J$. For equal interaction strengths on the edges and interaction strengths of opposite signs on the edges, as in Subsections \ref{ss-twoleads} and \ref{ss-twoleads2} respectively, the deficiency indices are given explicitly. Computation of the deficiency indices crucially requires knowledge of how many eigenvalues of the operator $J$, counting multiplicities, lie in the interval $(-1/2,1/2)$.

The normalized eigenfunctions for $J$ form an orthonormal basis for the space $L^2(\SS^1,\CC^2) = L^2(\II_+,\CC^2)\oplus L^2(\II_-,\CC^2)$ and Corollary~\ref{c-2specsymmetric} says that the spectrum of $J$ is symmetric about the origin. Eigenspaces of $J$ will be denoted by
\begin{align*}
    \cF_{\widetilde{\la}}:=\ker(J-\widetilde{\la}),
\end{align*}
where $\cF_{\widetilde{\la}} = \{0\}$ if $\widetilde{\la}\notin\sigma(J)$. The dimension of the eigenspace is denoted by
\[
	n_{\widetilde\la} := \dim\cF_{\widetilde{\la}}.
\]
An orthogonal decomposition will require knowledge of how the subspace $\cF_{\widetilde{\la}}$ behaves when the unitary operator $\cS\colon L^2(\mathbb{S}^1,\CC^2)\rightarrow L^2(\mathbb{S}^1,\CC^2)$, $\cS\varphi:=(\sigma\cdot\va{e}\ti{rad})\varphi$ is applied, due to the polar decomposition in equation \eqref{e-polardirac}. The statement of the following proposition is necessarily slightly stronger than other similar results in the literature only because the eigenfunctions are not explicitly determined. See, e.g.~\cite[Theorem 3.2]{PVDB} where Lorentz-scalar interactions on $C^2$ domains with finitely many corners are considered, for a comparison.

\begin{prop}\label{p-2bijection}
The unitary operator $\cS$ is a bijection between the eigenspaces $\cF_{\widetilde{\la}}$ and $\cF_{-\widetilde{\la}}$ for all $\widetilde{\la}\in\RR$. 
\end{prop}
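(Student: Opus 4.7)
My plan is to verify the operator identity $J\cS = -\cS J$ on $\dom J$, which, combined with unitarity of $\cS$, immediately gives that $\cS$ sends eigenvectors of $J$ at $\widetilde\la$ to eigenvectors at $-\widetilde\la$. Since the pointwise matrix $\sigma\cdot\va{e}\ti{rad}(\theta)=\cos\theta\,\sigma_1+\sin\theta\,\sigma_2$ squares to the identity, the operator $\cS$ is a unitary involution, so once $\cS(\dom J)\subseteq\dom J$ is established, the restriction of $\cS$ to $\cF_{\widetilde\la}$ automatically becomes a bijection onto $\cF_{-\widetilde\la}$ (and has inverse given by $\cS$ itself).

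The verification of $J\cS=-\cS J$ is a pointwise Pauli matrix computation. Using that $\sigma_3$ anti-commutes with both $\sigma_1$ and $\sigma_2$, the factor $\sigma\cdot\va{e}\ti{rad}(\theta)$ anti-commutes with $\sigma_3$. Applying the product rule to $\frac{d}{d\theta}\bigl[(\sigma\cdot\va{e}\ti{rad})\f\bigr]$ produces the derivative $\sigma\cdot\va{e}\ti{ang}$, which one checks satisfies $\sigma_3(\sigma\cdot\va{e}\ti{ang})=-i(\sigma\cdot\va{e}\ti{rad})$. Combining these two identities in the expression for $J(\cS\f)$, the $\f$-terms cancel and one recovers $-\cS(J\f)$.

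The domain invariance is where the Lorentz-scalar assumption $\eta_l=\eta_r=0$ enters critically. The key observation is that, under this assumption, the boundary matrix $M_l$ can be rewritten as
\[
M_l=\tfrac{1}{p_l}\bigl(m_l I+\tau_l\,\sigma\cdot\va{e}\ti{rad}(\omega)\bigr),
\]
and analogously $M_r=\tfrac{1}{p_r}\bigl(m_r I-\tau_r\,\sigma\cdot\va{e}\ti{rad}(-\omega)\bigr)$, so each $M_j$ is a polynomial in the constant matrix $\sigma\cdot\va{e}\ti{rad}$ evaluated at the corresponding angle. In particular, $M_l$ commutes with $\sigma\cdot\va{e}\ti{rad}(\omega)$ and $M_r$ with $\sigma\cdot\va{e}\ti{rad}(-\omega)$. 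Noting the identity $\va{e}\ti{rad}(2\pi-\omega)=\va{e}\ti{rad}(-\omega)$, evaluating $\cS\f=(\sigma\cdot\va{e}\ti{rad})\f$ at the boundary points $\pm\omega$ and $2\pi-\omega$ and using these commutation relations shows that $\cS\f$ satisfies the same boundary identities as $\f$, hence $\cS\f\in\dom J$ whenever $\f\in\dom J$. Regularity in $H^1$ on each interval is preserved since $\sigma\cdot\va{e}\ti{rad}$ has smooth entries.

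The principal obstacle I anticipate is precisely the commutation check at the vertices: it is this step that would fail for genuine electrostatic contributions, because $\sigma\cdot\va{e}\ti{rad}$ does not commute with $\sigma_0=I$ combined with the off-diagonal phases in the way required once $\eta_j\ne 0$. Once the commutation is in place, the argument is entirely formal: for $\f\in\cF_{\widetilde\la}$ we have $J(\cS\f)=-\cS(J\f)=-\widetilde\la(\cS\f)$, so $\cS\f\in\cF_{-\widetilde\la}$; the inverse map is $\cS$ itself, so $\cS\colon\cF_{\widetilde\la}\to\cF_{-\widetilde\la}$ is a unitary bijection for every $\widetilde\la\in\RR$, with both spaces reducing to $\{0\}$ when $\widetilde\la\notin\sigma(J)$ in accordance with Corollary~\ref{c-2specsymmetric}.
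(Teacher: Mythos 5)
Your argument is correct, and it takes a genuinely different and more conceptual route than the paper's. The paper works directly with the explicit eigenfunctions of $J$ (written as $c_{\pm,j}e^{\pm i\la\te}$ on each interval), applies $\cS$ to these explicit formulas, and then verifies by hand that the image satisfies the boundary conditions from \eqref{e-2matrixequations} for the flipped eigenvalue; injectivity and surjectivity are then checked separately. You instead establish the \emph{operator-level} anticommutation $J\cS=-\cS J$ on $\dom J$, together with the invariance $\cS(\dom J)\subseteq\dom J$ and the involution property $\cS^2=I$, from which the bijection $\cS\colon\cF_{\widetilde\la}\to\cF_{-\widetilde\la}$ follows automatically for every $\widetilde\la$ without ever writing down an eigenfunction. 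All the ingredients you invoke check out: the pointwise identities $\sigma_3(\sigma\cdot\va{e}\ti{ang})=-i(\sigma\cdot\va{e}\ti{rad})$ and $\{\sigma_3,\sigma\cdot\va{e}\ti{rad}\}=0$, the rewriting $M_l=p_l^{-1}(m_l I+\tau_l\,\sigma\cdot\va{e}\ti{rad}(\omega))$ and $M_r=p_r^{-1}(m_r I-\tau_r\,\sigma\cdot\va{e}\ti{rad}(-\omega))$ valid precisely when $\eta_l=\eta_r=0$, and the observation $\va{e}\ti{rad}(2\pi-\omega)=\va{e}\ti{rad}(-\omega)$. Your approach buys several things the paper's does not: it works at the level of the operator, so it does not depend on having solved the eigenvalue ODE; it isolates the exact algebraic reason the result holds (each $M_j$ is a polynomial in $\sigma\cdot\va{e}\ti{rad}$ at the relevant boundary angle); and, as you note, it makes transparent why the electrostatic case $\eta_j\ne0$ breaks the argument — the matrix $M_j$ is then no longer such a polynomial and fails to commute with $\sigma\cdot\va{e}\ti{rad}$. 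The same operator-theoretic argument extends verbatim to $J_N$ and Proposition~\ref{p-sbijection}, where the paper again only cites the analogy. The paper's calculational route is lower-tech but also produces the explicit constant relations in \eqref{e-2matrixequations}, which it reuses later when analyzing multiplicities; your version would require supplementing with those formulas where they are needed elsewhere.
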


\begin{proof}
Set $\la := \widetilde{\la}-1/2$.
Let $\varphi\in\cF_{\widetilde{\la}}$ be arbitrary. It can be represented as
\[
\varphi(\te) := \begin{cases}
(
c_{+,1}e^{i\la\te},
c_{+,2}e^{-i\la\te})^\top
,\qquad\te\in(-\w,\w),\\
(c_{-,1}e^{i\la\te},
c_{-,2}e^{-i\la\te})^\top
,\qquad\te\in(\w,2\pi-\w),\\
\end{cases}
\]
with certain constants $c_{\pm,j}\in\CC$, $j=1,2$.
Begin by defining
\begin{align*}
    \Phi:=\cS\f=(\sigma\cdot\va{e}\ti{rad})\f.
\end{align*}
In order to prove that $\cS$ maps $\cF_{\widetilde{\la}}$ onto $\cF_{-\widetilde{\la}}$ we start by showing $\Phi\in\cF_{-\widetilde{\la}}$. Calculate that 
\begin{align*}
    \Phi&=
    \begin{pmatrix}
    0 & e^{-i\te} \\
    e^{i\te} & 0
    \end{pmatrix}\f
    =
    \begin{cases}
    (
    c_{+,2}e^{i\te(-\la-1)},
    c_{+,1}e^{-i\te(-\la-1)})^\top
    ,\qquad\te\in(-\w,\w),\\
    (c_{-,2}e^{i\te(-\la-1)},
    c_{-,1}e^{-i\te(-\la-1)})^\top
    ,\qquad\te\in(\w,2\pi-\w).\\
    \end{cases}
\end{align*}
It is clear that $(-i\sigma_3\Phi'+ \frac12\Phi) = -\widetilde{\la}\Phi$ on the intervals $(-\w,\w)$ and $(\w,2\pi-\w)$, but remains to show that $\Phi$ satisfies the boundary conditions necessary to be in $\dom J$. However, $\f\in\dom J$ already. Conditions on the constants can be extracted from the matrices $M_l$ and $M_r$, see equation \eqref{e-angular}, and are given in equation \eqref{e-2matrixequations}. Likewise, a similar calculation reveals that $\Phi$ satisfies the boundary conditions given in equation \eqref{e-angular} exactly when the four conditions in equation \eqref{e-2matrixequations} are satisfied. Therefore, $\Phi\in\cF_{-\widetilde{\la}}$ and $\Ran(\cS|_{\cF_{\widetilde\la}})\subset
	\cF_{-\widetilde{\la}}$.

The previous discussion immediately implies that $\cS$ is injective: if $\cS(\f)=\cS(\psi)$ for $\f,\psi\in\cF_{\widetilde\la}$ it is clear that $\f =\psi$ because their constants must be identical. Alternatively, one could notice that $\cS^2=I$. 

Finally, to show $\cF_{-\widetilde{\la}}\subset\Ran(\cS|_{
\cF_{\widetilde{\la}}})$, let $\widetilde{\f}\in\cF_{-\widetilde{\la}}$ be arbitrary and determined by constants denoted $\widetilde{c}_{\pm,1}$ and $\widetilde{c}_{\pm,2}$. Then $\widetilde{\f}$ is the image under $\cS$ of 
\begin{align*}
\cS^{-1}\widetilde{\f}=
\begin{cases}
(
\widetilde{c}_{+,2}e^{i\la\te},
\widetilde{c}_{+,1}e^{-i\la\te})^\top
,\qquad\te\in(-\w,\w),\\
(\widetilde{c}_{-,2}e^{i\la\te},
\widetilde{c}_{-,1}e^{-i\la\te})^\top
,\qquad\te\in(\w,2\pi-\w).\\
\end{cases}
\end{align*}
It is also clear that $\cS^{-1}\widetilde{\f}$ satisfies the boundary conditions for $\dom J$ interpreted via equation \eqref{e-2matrixequations} because $\widetilde{\f}$ does. Hence, $\cS^{-1}\widetilde{\f}\in\cF_{\widetilde{\la}}$ and the result follows.
\end{proof}

Of course, the operator $\cS$ being bijective between $\cF_{\wt\la}$ and $\cF_{-\wt\la}$ means that $n_{\widetilde{\la}} = n_{-\widetilde{\la}}$. However, equation \eqref{e-2matrixequations} already gives us insight into what values $n_{\widetilde{\la}}$ is allowed to take. 

\begin{prop}\label{c-simple}
Let $\tau_l,\tau_r\in\RR\setminus\{-2,0,2\}$
and $\w\in(0,\pi)$. Then the following statements hold.
\begin{itemize}
\item [{\rm (i)}]	
$\dim\cF_{\widetilde{\la}}=n_{\widetilde{\la}}\in\{1, 2\}$ for all $\widetilde{\la}\in\sigma(J)$. 
\item [{\rm (ii)}]If neither $\tau_l=\pm\tau_r$ nor $\tau_l\tau_r=\pm4$ then the spectrum of $J$ is simple.
\item [{\rm (iii)}]
If $\tau_l\tau_r\neq-4$ then the spectrum of $J$ in the interval $(-1/2,1/2)$ is simple and $0\notin\sigma(J)$. If $\tau_l\tau_r=-4$ then $J$ has a double eigenvalue at $0$. 
\end{itemize}
\end{prop}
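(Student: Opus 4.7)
The plan is to convert the system \eqref{e-2matrixequations} into a two-dimensional kernel problem, since the kernel gives full control of geometric multiplicities. Using the first and third equations to eliminate $c_{-,1}$, and the second and fourth to eliminate $c_{-,2}$, one obtains a homogeneous $2\times 2$ system $M(\la)\kf{c_{+,1}}{c_{+,2}}=\kf{0}{0}$ with $\la=\widetilde\la-\tfrac12$, whose entries are built from $m_l/p_l$, $m_r/p_r$, $\tau_l/p_l$, $\tau_r/p_r$ and phases in $2\pi\la$ and $\w(2\la+1)$. Because $c_{-,1},c_{-,2}$ are then determined by $c_{+,1},c_{+,2}$, the eigenspace $\cF_{\widetilde\la}$ is linearly isomorphic to $\ker M(\la)$, so $n_{\widetilde\la}=2-\rk M(\la)\in\{0,1,2\}$, which gives (i) at once.

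For (ii), the plan is to rule out $M(\la)=0$. The diagonal entries are
\[
M_{11}(\la)=\frac{m_l}{p_l}-\frac{m_r}{p_r}e^{-i2\pi\la},\qquad M_{22}(\la)=\frac{m_l}{p_l}-\frac{m_r}{p_r}e^{i2\pi\la},
\]
so $M_{11}=M_{22}=0$ forces $\sin(2\pi\la)=0$, hence $\la\in\tfrac12\ZZ$, together with $m_lp_r=\pm m_rp_l$, the sign equal to $(-1)^{2\la}$. Two short algebraic identities do the rest:
\[
m_lp_r-m_rp_l=\tfrac12(\tau_l^2-\tau_r^2),\qquad m_lp_r+m_rp_l=\tfrac18(16-\tau_l^2\tau_r^2),
\]
so the integer case forces $\tau_l=\pm\tau_r$, while the half-integer case forces $|\tau_l\tau_r|=4$. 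Under the hypotheses of (ii) both alternatives fail, so $M(\la)\neq 0$ for every $\la\in\RR$ and no eigenvalue has multiplicity two.

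For (iii), the only half-integer in $\la\in(-1,0)$ is $\la=-\tfrac12$, so the paragraph above already shows that eigenvalues with $\widetilde\la\in(-1/2,1/2)\setminus\{0\}$ are simple regardless of $\tau_l\tau_r$. At $\la=-\tfrac12$ one has $\cos(2\pi\la)=\cos(2\w(2\la+1)-2\pi\la)=-1$, and \eqref{e-2eigenvalues} collapses to $p_lp_r+m_lm_r+\tau_l\tau_r=0$; a routine expansion recognizes the left-hand side as $(\tau_l\tau_r+4)^2/8$. Hence $\widetilde\la=0\in\sigma(J)$ if and only if $\tau_l\tau_r=-4$, which delivers $0\notin\sigma(J)$ in the first half of (iii). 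When $\tau_l\tau_r=-4$, the companion factorization $\tau_lp_r-\tau_rp_l=(\tau_l-\tau_r)(4+\tau_l\tau_r)/4$ makes the off-diagonal entries of $M(-\tfrac12)$ vanish as well (the diagonal ones vanish because $|\tau_l\tau_r|=4$), so $M(-\tfrac12)=0$ and $\widetilde\la=0$ is a double eigenvalue.

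The main obstacle is essentially bookkeeping: one must carefully separate the determinant vanishing (existence of an eigenvalue) from the whole matrix vanishing (geometric multiplicity two). The proof hinges on the observation that $M_{11}=M_{22}=0$ is extremely restrictive, pinning $\la$ to $\tfrac12\ZZ$, after which the two algebraic identities reduce everything to the threshold cases $\tau_l=\pm\tau_r$ and $\tau_l\tau_r=\pm4$; the clean perfect-square factorization of \eqref{e-2eigenvalues} at $\la=-\tfrac12$ is what singles out the sign $\tau_l\tau_r=-4$ in~(iii).
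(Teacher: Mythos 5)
Your proof is correct and follows essentially the same route as the paper: reduce the boundary conditions \eqref{e-2matrixequations} to a homogeneous $2\times2$ system in $(c_{+,1},c_{+,2})$, observe that multiplicity two is equivalent to the whole coefficient matrix vanishing, note that the vanishing of the diagonal entries pins $\la$ to $\tfrac12\ZZ$, and then analyze the integer and half-integer cases separately, finishing (iii) by evaluating \eqref{e-2eigenvalues} at $\la=-1/2$. Where you improve on the paper's exposition is in isolating the three exact factorizations
\[
m_lp_r-m_rp_l=\tfrac12(\tau_l^2-\tau_r^2),\qquad
m_lp_r+m_rp_l=\tfrac18(16-\tau_l^2\tau_r^2),\qquad
\tau_lp_r-\tau_rp_l=\tfrac14(\tau_l-\tau_r)(4+\tau_l\tau_r),
\]
and recognizing that \eqref{e-2eigenvalues} at $\la=-1/2$ is the perfect square $(\tau_l\tau_r+4)^2/8$ (up to a positive factor); this makes the passage from ``diagonal entries vanish'' to the threshold cases $\tau_l=\pm\tau_r$ and $\tau_l\tau_r=\pm4$, and the final sign selection $\tau_l\tau_r=-4$, completely transparent, where the paper works these out less explicitly. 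The logical content and case structure are the same as the paper's proof.
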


\begin{proof}
Let $\f$ be an eigenfunction of $J$ corresponding to the eigenvalue $\widetilde\la$ with prescribed constants $c_{+,1}$ and $c_{+,2}$. Then the constants $c_{-,1}$ and $c_{-,2}$ are uniquely determined via the first two conditions in equation \eqref{e-2matrixequations} which come from the boundary conditions on $\Gamma^l$. Hence, $n_{\widetilde\la}\leq 2$ and item (i) is shown.

These constants $c_{-,1}$ and $c_{-,2}$ in turn prescribe an additional two conditions on $c_{+,1}$ and $c_{+,2}$ through the boundary conditions on $\Gamma^r$, written as the last two conditions in equation \eqref{e-2matrixequations}. The four conditions in equation \eqref{e-2matrixequations} can then be equated so that only $c_{+,1}$ and $c_{+,2}$ are unknown. The resulting conditions are:
\begin{equation}\label{eq:cond}
\begin{aligned}
    &c_{+,1}\left(\frac{m_l}{p_l}-\frac{m_re^{-i2\pi\la}}{p_r}\right)=-c_{+,2}\left(\frac{\tau_re^{i[\w(2\la+1)-2\pi\la]}}{p_r}+\frac{\tau_le^{-i\w(2\la+1)}}{p_l}\right), \\
    &c_{+,1}\left(\frac{\tau_le^{i\w(2\la+1)}}{p_l}+\frac{\tau_re^{-i[\w(2\la+1)-2\pi\la]}}{p_r}\right)=-c_{+,2}\left(\frac{m_l}{p_l}-\frac{m_re^{i2\pi\la}}{p_r}\right).
\end{aligned}
\end{equation}
Of course, if this system is written as a matrix then setting the determinant equal to $0$ recovers the general formula \eqref{e-2eigenvalues}. Eigenvalues of multiplicity two occur when all four entries of the matrix are zero. It is easy to see that two of the entries are real if and only if $e^{\pm i2\pi\la}\in\RR$. Hence, double eigenvalues require $\la=k$ or $\la=k+1/2$, where $k\in\ZZ$. 

Start by letting $\la=k$ for $k\in\ZZ$. Then $\la$ can be a double eigenvalue only if $\frac{m_l}{p_l}-\frac{m_r}{p_r}=0$, which happens only when $\tau_l=\pm\tau_r$. Plugging $\tau_l = \tau_r$ and $\la = k$ into the other entries from the system in equation \eqref{eq:cond} then implies that $\la= k$ is a double eigenvalue if
\begin{align}\label{e-doubleequaltaus}
    \frac{\tau_l\cos[\w(2k+1)]}{p_l}=0.
\end{align}
Equation \eqref{e-doubleequaltaus} is satisfied for 
\begin{align*}
    \w=\frac{\pi(2k+2s+1)}{2(2k+1)},\qquad s\in\ZZ.
\end{align*}
Plugging now $\tau_l = -\tau_r$ and $\la = k$ into the other entries from the system in equation \eqref{eq:cond} then implies that $\la = k$ is a double eigenvalue if
\begin{align}\label{e-doubleequaltaus2}
\frac{\tau_l\sin[\w(2k+1)]}{p_l}=0.
\end{align}
Equation \eqref{e-doubleequaltaus2} is satisfied for 
\begin{align*}
\w=\frac{\pi s}{2k+1},\qquad s\in\ZZ.
\end{align*}

Now consider the case $\la=k+1/2$ for $k\in\ZZ$. It is then necessary for $\frac{m_l}{p_l}+\frac{m_r}{p_r}=0$ and hence $\tau_l\tau_r=\pm4$. Item ${\rm (ii)}$ is thus proven. Plugging $\tau_l=4/\tau_r$ into the other entries from the system in equation \eqref{eq:cond} implies that $\la = k+1/2$ is a double eigenvalue if
\begin{align}\label{e-doublerequire1}
    \frac{\tau_r\cos[\w(2k+2)]}{\tau_r^2-4}=0,
\end{align}
must hold. This is true for $k\in\ZZ\backslash\{-1\}$
\begin{align*}
\w=\frac{\pi+2\pi s}{4k+4},\qquad s\in\ZZ
\end{align*}
Letting $\tau_l=-4/\tau_r$ yields a condition identical to equation \eqref{e-doublerequire1} except cosine is replaced by sine. For $k\in\ZZ$, we have a double eigenvalue if either $k =-1$, or $k\ne -1$ and 
\begin{align*}
\w=\frac{\pi s}{2(k+1)},\qquad s\in\ZZ.
\end{align*}
otherwise. All cases where eigenvalues $\la$ have multiplicity two have thus been analyzed. 

Critically, we notice that this last scenario, where $\tau_l\tau_r=-4$, is the only way a double eigenvalue can fall in the interval $(-1,0)$, and is due to the solution with $k=-1$ corresponding to $\la=-1/2$. Indeed, plugging $\la = -1/2$ into~\eqref{e-2eigenvalues} yields the condition
\[
	0 = (4-\tau_l^2)(4-\tau_r^2)
	+(4+\tau_l^2)(4+\tau_l^2)+ 16\tau_l\tau_r
	= 32 + 16\tau_l\tau_r + \tau_l^2\tau_r^2
	= (4+\tau_l\tau_r)^2,
\]
and so $\tau_l\tau_r=-4$ is a necessary condition for $\la=-1/2$ to be an eigenvalue. Item ${\rm (iii)}$ has been proven.
\end{proof}

\begin{rem}\label{rem:basis_for_zero}
Proposition~\ref{c-simple}\,(iii) says that $\tau_l\tau_r = -4$ is necessary and sufficient for $0\in\sigma(J)$. For $\tau_l\tau_r = -4$, we will show that it is possible to find an orthonormal basis $\{\f_0^1,\f_0^2\}$ of $\ker J$ such that $\f_0^2 = \cS\f_0^1$. Recall that by Proposition~\ref{p-2bijection}, for any normalized $\f\in\ker J$ one has $\cS\f\in\ker J$. It may happen that $\cS\f=\pm\f$, in which case $\f$ and $\cS\f$ are not linearly independent.

Let $\f_0^1$ and $\f_0^2$ be elements of $\ker(J)$ that are linearly independent. Let us show that both $\f_{0}^1$ and $\f_0^2$ cannot be such that $\cS\f_0^k = \f_0^k$, $k=1,2$. Suppose that $\cS\f_0^1=\f_0^1$. If the constants describing $\f_0^1$ are denoted $c_{\pm,j}$, $j=1,2$, then this means that $c_{+,1} = c_{+,2}$
and $c_{-,1} = c_{-,2}$. It is possible to write $c_{-,1}$ as a function of $c_{+,1}$ via the boundary conditions \eqref{eq:cond}, meaning that $\f_0^1$ is completely determined by $c_{+,1}$. Now assume that $\cS\f_0^2=\f_0^2$ too. Denoting the constants of $\f_0^2$ as $d_{\pm,j}$, $j=1,2$, the same line of reasoning means that $d_{+,1} = d_{+,2}$, $d_{-,1}=d_{-,2}$ and that $\f_0^2$ is determined by only $d_{+,1}$. This contradicts the statement that $\f_0^1$ and $\f_0^2$ are linearly independent. A similar argument shows that both $\f_0^1$ and $\f_0^2$ cannot be such that $\cS\f_0^k=-\f_0^k$, for $k=1,2$, with the main difference being that $c_{+,1} = -c_{+,2}$ and $c_{-,1} = -c_{-,2}$. As the restriction of $\cS$ to the $\ker(J)$ is acting on a two-dimensional space, after relabeling, we conclude that 
\begin{align*}
    \cS\cong\begin{pmatrix}
        1 & 0 \\
        0 & -1
    \end{pmatrix}.
\end{align*}
Now, given two normalized eigenfunctions of $\cS$, $\f_1$ and $\f_{-1}$, with eigenvalues $1$ and $-1$, respectively, it is possible to define
\begin{align*}
    \f_0^1 := \frac{1}{\sqrt{2}}(\f_1+\f_{-1})\quad\text{and}\quad
	\f_0^2 := \frac{1}{\sqrt{2}}(\f_1-\f_{-1}),
\end{align*}
so that the two functions are orthonormal, $\cS\f_0^1=\f_0^2$ and $\cS\f_0^2=\f_0^1$, as desired. 
\hfill$\diamondsuit$
\end{rem}

% To this end, let $c_{\pm,j}$, $j=1,2$ be the constants describing $\f$_0^1.  If $\cS\f = \f$ then $c_{+,1} = c_{+,2}$
% and $c_{-,1} = c_{-,2}$. This relation between the constants contradicts the boundary conditions in the definition of $\dom J$ though. Analogously, we can check that $\cS\f = -\f$ can not occur. Hence, we end up with $\cF_0 = {\rm span}\{\f,\cS \f\}$. Moreover, an elementary computation shows that $\langle\cS \f,\f\rangle_{\SS^1} \in (-1,1)$, where $\langle\cdot,\cdot\rangle_{\SS^1}$ stands for the inner product in $L^2(\SS^1,\CC^2)$. Now, consider the functions
% 	\[	
% 		\f_0^1 := a(\f+b\cS\f)\quad\text{and}\quad
% 		\f_0^2 := a(\cS \f+b\f)
% 	\]		
% 	for some $a > 0$ and $b \in\RR$ to be determined.
% 	Clearly, $\f^j_0\in\ker J$ for $j=1,2$
% 	and $\f_0^2 = \cS\f_0^1$. Our aim is now to find constants $a,b$ so that the functions $\f^1_0$ and $\f^2_0$ 
% 	constitute an orthonormal basis of $\cF_0$. If $\varphi$ is orthogonal to $\cS\varphi$ then we choose $a = 1$ and $b=0$. If it is not the case we compute
% 	\[	
% 		\langle\f^1_0,\f^2_0\rangle_{\SS^1} = a^2\big(\langle\f,\cS\f\rangle_{\SS^1}+b^2\langle\f,\cS\f\rangle_{\SS^1}+2b\big). 
% 	\]  
% Setting
% 	\[
% 		b = \frac{\sqrt{1-\langle\f,\cS \f\rangle^2_{\SS^1}}-1}{\langle\varphi,\cS\varphi\rangle_{\SS^1}}.
% 	\]
% 	then yields that the functions $\f_0^1$ and $\f_0^2$ are orthogonal. It only remains to choose the constant $a>0$ so that the functions $\f^1_0$ and $\f^2_0$ are normalized, which is always possible.

It is now possible to construct an orthogonal decomposition of the Hilbert space 
\begin{align*}
L^2_{\rm pol}(\RR^2,\CC^2) = L^2_{\rm pol}(\Omega_+,\CC^2)\oplus L^2_{\rm pol}(\Omega_-,\CC^2),
\end{align*}
in terms of the spectral decomposition of the spin-orbit operator $J$. Double eigenvalues of $J$ and the eigenvalue $0$ will require careful consideration.  

Let $\widetilde{\la}\in\sigma(J)\cap(0,\infty)$
with $n_{\widetilde\la} = 1$ and $\f_{\widetilde\la}^1\in\cF_{\widetilde\la}$ be normalized. Denote
\begin{align*}
    \cE_{\widetilde\la}^1 :=L^2_r(\RR_+)\otimes{\rm span}\{\f_{\widetilde\la}^1,\cS\f_{\widetilde\la}^1\}.
\end{align*}
Let $\widetilde{\la}\in\sigma(J)\cap(0,\infty)$
with $n_{\widetilde\la} = 2$ and
$\{\f^1_{\widetilde\la},\f^2_{\widetilde\la}\}$ be the orthonormal basis of
$\cF_{\widetilde\la}$. Denote
\begin{align*}
\cE_{\widetilde\la}^j :=L^2_r(\RR_+)\otimes{\rm span}\{\f_{\widetilde\la}^j,\cS\f_{\widetilde\la}^j\},\qquad j=1,2.
\end{align*}
When $0\in\sigma(J)$, let
$\{\f^1_{0},\f^2_{0}\}$ be the orthonormal basis of
$\cF_{0}$ constructed as in Remark~\ref{rem:basis_for_zero} and denote
\begin{align*}
\cE_{0} :=L^2_r(\RR_+)\otimes{\rm span}\{\f_{0}^1,\cS\f_{0}^1\}.
\end{align*}
The following orthogonal decomposition now holds if $0\notin\sigma(J)$
\begin{align*}
    L^2_{\rm pol}(\RR^2,\CC^2) \simeq L^2_r(\RR_+)\otimes\left[L^2(\II_+,\CC^2)\oplus L^2(\II_-,\CC^2)\right]
    =
    \bigoplus_{
    \widetilde\la\in\sigma(J)\cap(0,\infty)}\bigoplus_{j=1}^{n_{\widetilde\la}}\cE_{\widetilde\la}^j,
\end{align*}
while if $0\in\sigma(J)$ we have the slightly modified decomposition
\begin{align*}
L^2_{\rm pol}(\RR^2,\CC^2) \simeq \cE_0\oplus\left(
\bigoplus_{
	\widetilde\la\in\sigma(J)\cap(0,\infty)}\bigoplus_{j=1}^{n_{\widetilde\la}}\cE_{\widetilde\la}^j\right).
\end{align*}
Finally, it is convenient to introduce the
unitary transformations $W_{\wt\la}\colon\cE_{\wt\la}^j\to L^2(\RR_+,\CC^2)$, for $\wt\la\in\sigma(J)\cap(0,\infty)$ and $1\le j\le n_{\wt\la}$ that act via
\begin{align*}
    (W_{\widetilde\la}^j u)(r):=\sqrt{r}
    \begin{pmatrix}
    \langle u(r,\cdot),\f_{\widetilde\la}^j\rangle_{\SS^1} \\
    i\langle u(r,\cdot),\cS\f_{\widetilde\la}^j\rangle_{\SS^1}
    \end{pmatrix}.
\end{align*}
If $0\in\sigma(J)$, then the unitary mapping $W_0\colon\cE_0\rightarrow L^2(\RR_+,\CC^2)$ acts analogously via
\begin{align*}
(W_{0} u)(r):=\sqrt{r}
\begin{pmatrix}
\langle u(r,\cdot),\f_{0}^1\rangle_{\SS^1} \\
i\langle u(r,\cdot),\cS\f_{0}^1\rangle_{\SS^1}
\end{pmatrix}.
\end{align*}
The next theorem decomposes the operator $\fD$ into the orthogonal sum of half-line Dirac operators with off-diagonal Coulomb potentials. For $\wt\la\ge 0$, these half-line Dirac operators are symmetric and densely defined operators in the Hilbert space $L^2(\RR_+,\CC^2)$. They are denoted
\begin{equation}\label{eq:1dDirac}
\begin{aligned}
\bd_{\wt\la}\psi &:=
\begin{pmatrix}
0 & -\frac{d}{dr}-\frac{\widetilde{\la}}{r} \\
\frac{d}{dr}-\frac{\widetilde{\la}}{r} &  0
\end{pmatrix}\psi,\\
\dom\bd_{\wt\la}&:=
\begin{cases} H_0^1(\RR_+,\CC^2), &\quad\wt\la\ne 1/2,\\
\left\{\psi\colon 
\psi_1,\psi_1' - \frac{\psi_1}{2r}\in L^2(\RR_+),~ \psi_2\in H^1_0(\RR_+)\right\},&\quad\wt\la = 1/2.
\end{cases}
\end{aligned}
\end{equation} 
Appendix~\ref{app} shows that the operators $\bd_{\wt\la}$ are closed.

\begin{theo}\label{p-decomp}
For $\wt\la\in\sigma(J)\cap(0,\infty)$, the operators
\begin{align*}
d_{\wt\la}^j u:=\widetilde{\cD}u, \hspace{3em} \dom d_{\wt\la}^j:=\dom\,\widetilde{\fD}\cap\cE_{\wt\la}^j,~~~\text{ for }~1\le j\le n_{\wt\la},
\end{align*}
and, in the case that $0\in\sigma(J)$, the operator
\begin{align*}
d_0 u:=\widetilde{\cD}u, \hspace{3em} \dom d_0:=\dom\,\widetilde{\fD}\cap\cE_0,
\end{align*}
are well defined in the Hilbert spaces $\cE_{\wt\la}^j$ and $\cE_0$, respectively. The operator $d_{\wt\la}^j$ is unitarily equivalent via $W_{\wt\la}^j$
to the half-line Dirac operator $\bd_{\wt\la}$ in $L^2(\RR_+,\CC^2)$ defined in~\eqref{eq:1dDirac}, while the operator $d_0$ is unitarily equivalent via $W_0$ to $\bd_0$.
In particular, the decomposition
\begin{align*}
    \fD\simeq
    \begin{cases}\bigoplus_{\wt\la\in\sigma(J)\cap (0,\infty)}\bigoplus_{j=1}^{n_{\wt\la}}\bd_{\wt\la},& 0\notin\sigma(J),\\
    \bd_0\oplus
    \left(\bigoplus_{\wt\la\in\sigma(J)\cap (0,\infty)}\bigoplus_{j=1}^{n_{\wt\la}}\bd_{\wt\la}\right),& 0\in\sigma(J).
    \end{cases}
\end{align*}
holds, the operator $\fD$ is closed and its deficiency indices can be computed as 
\[
n_{\pm}(\fD)=
\begin{cases}
\sum_{\wt\la\in\sigma(J)\cap(0,\infty)}
n_{\wt\la} n_{\pm}(\bd_{\wt\la}),& 0\notin\sigma(J),\\
n_\pm(\bd_0)+\sum_{\wt\la\in\sigma(J)\cap(0,\infty)}
n_{\wt\la} n_{\pm}(\bd_{\wt\la}),& 0\in\sigma(J).
\end{cases}
\]
\end{theo}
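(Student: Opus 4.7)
The plan is to verify that the polar form \eqref{e-polaraction} of the Dirac differential expression respects the spectral decomposition of the spin-orbit operator $J$, identify the resulting action on each two-dimensional angular subspace $\cE_{\wt\la}^j$ (respectively $\cE_0$) as a half-line Dirac operator with an off-diagonal Coulomb potential, and then assemble the fiber-wise equivalences into a single orthogonal sum. Fix $\wt\la\in\sigma(J)\cap(0,\infty)$, $1\le j\le n_{\wt\la}$, set $\f := \f_{\wt\la}^j$, and consider an arbitrary $u(r,\te) = f(r)\f(\te) + g(r)(\cS\f)(\te)$ in $\cE_{\wt\la}^j$, with $f,g\in L^2_r(\RR_+)$. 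By Proposition~\ref{p-2bijection}, $\cS\f\in\cF_{-\wt\la}$, so that $\{\f,\cS\f\}$ is orthonormal as a pair of eigenvectors of the self-adjoint operator $J$ corresponding to distinct eigenvalues. The case $\wt\la = 0$ proceeds in parallel using the specific basis $\{\f_0^1,\f_0^2 = \cS\f_0^1\}$ constructed in Remark~\ref{rem:basis_for_zero}.

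Recognizing that the angular operator $-i\sigma_3\partial_\te + \tfrac{1}{2}$ appearing in \eqref{e-polaraction} coincides with the fiber action of $J$, the eigenvalue equations $J\f = \wt\la\f$ and $J\cS\f = -\wt\la\cS\f$ combined with $\cS^2 = I$ yield after a direct computation
\begin{equation*}
\widetilde{\cD}u \;=\; -i\bigl(g' + \tfrac{g}{2r} + \tfrac{\wt\la g}{r}\bigr)\f \;-\; i\bigl(f' + \tfrac{f}{2r} - \tfrac{\wt\la f}{r}\bigr)\cS\f,
\end{equation*}
showing that $\widetilde{\cD}$ maps $\cE_{\wt\la}^j\cap\dom\widetilde\fD$ into $\cE_{\wt\la}^j$, so $d_{\wt\la}^j$ is well-defined. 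Setting $\psi := W_{\wt\la}^j u = \sqrt{r}\,(f, ig)^\top$ and using $\psi_1' = \sqrt{r}(f' + f/(2r))$ together with $\psi_2' = i\sqrt{r}(g' + g/(2r))$, the two components of $W_{\wt\la}^j \widetilde{\cD} u$ match term-by-term with $-\psi_2' - \wt\la\psi_2/r$ and $\psi_1' - \wt\la\psi_1/r$, giving the unitary equivalence $W_{\wt\la}^j d_{\wt\la}^j (W_{\wt\la}^j)^{-1} = \bd_{\wt\la}$. The analogous calculation performed with $\f_0^1$ gives the equivalence of $d_0$ and $\bd_0$.

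The most delicate step is the domain matching. The angular transmission conditions on $\Gamma^l$ and $\Gamma^r$ inherent to $\dom\widetilde{\fD}$ are automatically inherited, because the trace of $u$ on each edge factors as a radial profile multiplied by the angular trace of $\f$ (or $\cS\f$), and the latter already belongs to $\dom J$ via \eqref{e-angular}. The $H^1_{\rm pol}$ regularity of $u$, transported through the $\sqrt{r}$ weight of $W_{\wt\la}^j$, gives in the generic case $\psi\in L^2(\RR_+,\CC^2)$, $\psi'\in L^2(\RR_+,\CC^2)$ and $\psi/r\in L^2(\RR_+,\CC^2)$; a Hardy-type bound then forces $\psi(0) = 0$, so $\psi\in H^1_0(\RR_+,\CC^2)$ for every $\wt\la\ne 1/2$. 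At the threshold $\wt\la = 1/2$, one of the Frobenius solutions of the model radial system behaves as $r^{-1/2}$, and separately $\psi_1'$ and $\psi_1/(2r)$ need no longer lie in $L^2$, although the combination $\psi_1' - \psi_1/(2r)$ (which is exactly what enters $\bd_{1/2}\psi$) does; this gives the enlarged domain recorded in \eqref{eq:1dDirac}. The reverse inclusion $\dom\bd_{\wt\la}\subseteq W_{\wt\la}^j(\dom\widetilde{\fD}\cap\cE_{\wt\la}^j)$ follows from the same change of variables run backwards.

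With the fiber-wise equivalences established, the orthogonal decomposition of $L^2_{\rm pol}(\RR^2,\CC^2)$ recorded above the theorem immediately furnishes the displayed unitary equivalence for $\fD$. Closedness of $\fD$ then follows from closedness of each summand $\bd_{\wt\la}$, proved in Appendix~\ref{app}, together with the elementary fact that an orthogonal sum of closed operators is closed. Additivity of deficiency indices under orthogonal sums, combined with the observation that each eigenvalue $\wt\la>0$ of $J$ contributes $n_{\wt\la}$ copies of $\bd_{\wt\la}$, yields the stated formula for $n_\pm(\fD)$. The main obstacle I anticipate is a clean treatment of the threshold case $\wt\la = 1/2$, where the subtle interplay between $H^1_{\rm pol}$ regularity on the sectors and the $r^{-1/2}$ singularity of the radial model equation dictates the precise form of $\dom\bd_{1/2}$.
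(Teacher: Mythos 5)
Your proof follows essentially the same route as the paper: parametrize functions in $\cE_{\wt\la}^j$ by a radial profile against the angular basis $\{\f_{\wt\la}^j,\cS\f_{\wt\la}^j\}$, compute $\widetilde{\cD}$ via~\eqref{e-polaraction} using $J\f=\wt\la\f$, $J\cS\f=-\wt\la\cS\f$ and $\cS^2=I$, conjugate by $W_{\wt\la}^j$, and assemble. The algebraic identification $W_{\wt\la}^j d_{\wt\la}^j (W_{\wt\la}^j)^{-1}=\bd_{\wt\la}$ is correct.

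There is, however, a genuine gap in the domain matching at the threshold $\wt\la=1/2$. You attribute the enlarged domain to a ``Frobenius solution of the model radial system behaving as $r^{-1/2}$,'' but that is not the mechanism, and it does not explain why the condition on $\psi_1$ relaxes precisely at $\wt\la=1/2$ and not elsewhere. What actually happens is an angular phenomenon: by~\eqref{eq:fSf}, $\|(\f_{\wt\la}^j)'\|_{\SS^1}=0$ if and only if $\wt\la=1/2$ (because $\la=\wt\la-\tfrac12=0$ makes the eigenfunction constant in $\te$), while $\|(\cS\f_{\wt\la}^j)'\|_{\SS^1}\ne 0$ always. The constraint $\psi_1/r\in L^2$ comes solely from requiring $r^{-1}\partial_\te u\in L^2_{\rm pol}$, and it is weighted by $\|(\f_{\wt\la}^j)'\|$. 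At $\wt\la=1/2$ that weight vanishes, so the constraint drops out and $\psi_1$ is only required to satisfy $\psi_1,\psi_1'-\psi_1/(2r)\in L^2(\RR_+)$. Without this observation, your claim that ``$H^1_{\rm pol}$ regularity gives $\psi/r\in L^2$ in the generic case'' is unjustified: you need to show why $\psi_1/r\in L^2$ follows exactly when $\wt\la\ne 1/2$, which requires the cited fact about the vanishing of $(\f_{1/2}^j)'$. This is the paper's appeal to \cite[Proposition 2.2/2.4]{CP} combined with~\eqref{eq:fSf}. The rest of your argument (injectivity of the pairing, orthogonality of the $\cE_{\wt\la}^j$, additivity of deficiency indices and closedness under orthogonal sums, invocation of Appendix~\ref{app}) is in order.
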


\begin{proof}
We prove the theorem only for the case $0\notin\sigma(J)$, with an analogous argument holding when $0\in\sigma(J)$. Let $\wt\la\in\sigma(J)\cap (0,\infty)$.
A function $u\in\dom\widetilde{\fD}\cap\cE_{\wt\la}^j$, $1\le j\le n_{\wt\la}$, can be written as
\begin{align*}
    u(r,\te)=\frac{\psi_1(r)}{\sqrt{r}}\f_{\wt\la}^j(\te)-i\frac{\psi_2(r)}{\sqrt{r}}(\cS\f_{\wt\la}^j)(\te),
\end{align*}
for some $\psi_1,\psi_2\colon\RR_+\to\CC$.
It is easy to check that 
\begin{equation}\label{eq:fSf}
\begin{aligned}
	\left\langle(\f_{\wt\la}^j)',(\cS\f_{\wt\la}^j)'\right\rangle_{\SS^1} &= \left\langle(i\sigma_3\f_{\wt\la}^j)',(i\sigma_3\cS\f_{\wt\la}^j)'\right\rangle_{\SS^1} = 0,\\ \left\|(\cS\f^j_{\wt\la})'\right\|_{\SS^1}&\ne 0\quad\text{for all}\quad\wt\la \ge 0,\\
	\left\|(\f^j_{\wt\la})'\right\|_{\SS^1}&\ne 0\quad\text{if and only if}\quad \wt\la \ne 1/2.
\end{aligned}		
\end{equation}	 
Thanks to the orthogonality of $\f_{\wt\la}^j$ and $\cS\f_{\wt\la}^j$, equation \eqref{eq:fSf} and the expression for the gradient in polar coordinates we obtain that $u\in H^1_{\rm pol}(\Omega_+,\CC^2)\oplus H^1_{\rm pol}(\Omega_-,\CC^2)$ is equivalent to 
the conditions
$\psi_j,\psi_j' -\frac{\psi_j}{2r}, \frac{\psi_2}{r}\in L^2(\RR_+)$ for $j=1,2$ and, in addition, $\frac{\psi_1}{r}\in L^2(\RR_+)$ provided that $\wt\la \ne 1/2$.  Further, using estimates from \cite[Proposition 2.2/2.4]{CP}, we can obtain the equivalences
\begin{align*}
    u&\in H^1_{\rm pol}(\Omega_+,\CC^2)\oplus H^1_{\rm pol}(\Omega_-,\CC^2)\hspace{.5em}\iff\hspace{.5em}\psi_1,\psi_2\in H_0^1(\RR_+),\qquad\wt\la \ne 1/2,\\
    u&\in H^1_{\rm pol}(\Omega_+,\CC^2)\oplus H^1_{\rm pol}(\Omega_-,\CC^2)\hspace{.5em}\iff\hspace{.5em}\psi_1,\psi_1'-\frac{\psi_1}{2r}
    \in L^2(\RR_+),~\psi_2\in H_0^1(\RR_+),\quad\wt\la = 1/2.
\end{align*}
In order to prove that the operators $d_{\wt\la}^j$ are symmetric and that the orthogonal decomposition of $\fD$ holds, we must show that $\widetilde{\fD}(\dom\widetilde{\fD}\cap\cE_{\wt\la}^j)\subset\cE_{\wt\la}^j$ holds. Hence, calculate
\begin{align*}
    \widetilde{\fD}u&=-\frac{i(\sigma\cdot\va{e}\ti{rad})}{\sqrt{r}}\left[\f_{\wt\la}^j\left(\partial_r\psi_1-\frac{\wt\la\psi_1}{r}\right)-i\cS\f_{\wt\la}^j\left(\partial_r\psi_2+\frac{\wt\la\psi_2}{r}\right)\right] \\
    &=\frac{1}{\sqrt{r}}\left[-i\cS\f_{\wt\la}^j\left(\partial_r\psi_1-\frac{\widetilde{\la}\psi_1}{r}\right)+\f_{\wt\la}^j\left(-\partial_r\psi_2-\frac{\wt\la\psi_2}{r}\right)\right]. 
\end{align*}
It is then easy to see that by construction
\begin{align*}
    	W_{\wt\la}^j d_{\wt\la}^j (W_{\wt\la}^j)^{-1}=\bd_{\wt\la},
\end{align*}
holds. The result follows. 
\end{proof}

The deficiency indices of $\fD$ can now easily be identified.

\begin{prop}\label{prop:def_indices_bl}
	The deficiency indices $n_\pm(\fD)$ of the Dirac operator $\fD$ are both equal to half of the number of eigenvalues of the spin-orbit operator $J$ lying in the interval $(-1/2,1/2)$ with multiplicities taken into account. In particular, $n_+(\fD) = n_-(\fD) \le 2$.
\end{prop}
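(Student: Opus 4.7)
The plan is to combine the orthogonal decomposition of Theorem~\ref{p-decomp} with the standard classification of deficiency indices for the half-line Dirac operators $\bd_{\wt\la}$ defined in~\eqref{eq:1dDirac}. Using~\cite{CP} together with the domain adjustments discussed in Appendix~\ref{app}, I would first verify that
\[
n_\pm(\bd_{\wt\la})=\begin{cases}1, & 0\le \wt\la<1/2,\\ 0, & \wt\la\ge 1/2,\end{cases}
\]
so that $\bd_{\wt\la}$ is already self-adjoint precisely when $\wt\la\ge 1/2$, whereas the Coulomb-type singularity at the origin produces a one-dimensional deficiency for each smaller value of $\wt\la$.

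Next, I would insert this classification into the two decomposition formulas provided by Theorem~\ref{p-decomp}. In the case $0\notin\sigma(J)$, only the eigenvalues $\wt\la\in\sigma(J)\cap(0,1/2)$ contribute, each with weight $n_{\wt\la}$, yielding
\[
n_\pm(\fD)=\sum_{\wt\la\in\sigma(J)\cap(0,1/2)} n_{\wt\la}.
\]
The symmetry of $\sigma(J)$ about the origin (Corollary~\ref{c-2specsymmetric}), combined with the identity $n_{\wt\la}=n_{-\wt\la}$ established in Proposition~\ref{p-2bijection}, identifies this with one half of the total number of eigenvalues of $J$ lying in $(-1/2,1/2)$, multiplicities included.

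If instead $0\in\sigma(J)$, Proposition~\ref{c-simple}(iii) guarantees that the eigenvalue at the origin has multiplicity two, yet Theorem~\ref{p-decomp} still produces only one copy of $\bd_0$, since the two-dimensional $\ker J$ has been packaged into the single subspace $\cE_0$ through the choice of basis in Remark~\ref{rem:basis_for_zero}. Thus $\bd_0$ contributes $n_\pm(\bd_0)=1$ to each deficiency index, while the remaining positive eigenvalues inside $(0,1/2)$ contribute as above. The total $1+\sum_{\wt\la\in\sigma(J)\cap(0,1/2)} n_{\wt\la}$ agrees with half of $2+2\sum_{\wt\la\in\sigma(J)\cap(0,1/2)} n_{\wt\la}$, which is exactly the number of eigenvalues of $J$ in $(-1/2,1/2)$ counted with multiplicities. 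The final bound $n_\pm(\fD)\le 2$ then follows at once from Proposition~\ref{prop:def_bnd}, which caps that count at four.

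The main obstacle is the first step: the borderline value $\wt\la=1/2$ must be treated carefully, since the domain in~\eqref{eq:1dDirac} is prescribed differently there and one has to verify, rather than just quote, the limit-point behavior at the origin in order to confirm $n_\pm(\bd_{1/2})=0$. Once that threshold analysis is in place, the remainder of the argument is essentially bookkeeping driven by the symmetry of $\sigma(J)$ and by the multiplicity count in Proposition~\ref{c-simple}.
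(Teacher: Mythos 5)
Your proposal is correct and takes essentially the same route as the paper: it plugs the $\cite{CP}$ classification $n_\pm(\bd_{\wt\la})=1$ for $\wt\la\in[0,1/2)$ and $n_\pm(\bd_{\wt\la})=0$ for $\wt\la\ge1/2$ into the decomposition formula of Theorem~\ref{p-decomp}, then uses the symmetry of $\sigma(J)$ about $0$ (together with $n_{\wt\la}=n_{-\wt\la}$) and the bound from Proposition~\ref{prop:def_bnd} to finish. The paper states this more tersely, citing $\cite[\text{Theorems }1.1,\,1.2(\text{i})]{CP}$ for the half-line operators and letting Theorem~\ref{p-decomp} do the bookkeeping implicitly, whereas you spell out the $0\in\sigma(J)$ accounting and flag the $\wt\la=1/2$ threshold; both of those points are already handled in the paper by Remark~\ref{rem:basis_for_zero}, the formula in Theorem~\ref{p-decomp}, and Appendix~\ref{app}, so there is no gap.
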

\begin{proof}
	The operator $\bd_{\wt\la}$ for $\wt\la\ge 1/2$ is self-adjoint by~\cite[Theorem 1.1]{CP}.
	While for $\wt\la\in [0,1/2)$ the operator $\bd_{\wt\la}$ has deficiency indices $(1,1)$. Here, the analysis for the case $0 < \wt\la < 1/2$ can be found
	in \cite[Theorem 1.2(i)]{CP} while the case $\wt\la = 0$ is standard. Hence, the characterization of the deficiency indices of $\fD$ follows from Theorem~\ref{p-decomp}. Combining this fact with Proposition~\ref{prop:def_bnd2} yields $n_+(\fD) = n_-(\fD) \le 2$.
\end{proof}
Proposition \ref{prop:def_indices_bl} can immediately be applied to the examples from Subsections~\ref{ss-twoleads} and~\ref{ss-twoleads2}, which deal with the special cases when $\tau_l=\tau_r$ and $-\tau_l=\tau_r$, respectively, because the number of eigenvalues in the interval $(-1/2,1/2)$ were already determined. 

\begin{cor}\label{c-2defindices}
Let $\w\in(0,\pi)$. Then the following hold.
\begin{itemize}
	\item [{\rm (i)}]
	If $\tau_l= \tau_r\notin \{-2,0,2\}$, then the deficiency indices of $\fD$ are $n_+(\fD) = n_-(\fD)= 1$ if $\w\ne \frac{\pi}{2}$, while if $\w = \frac{\pi}{2}$ then $\fD$ is  self-adjoint.
	\item [{\rm (ii)}]
	If $\tau_l= -\tau_r\notin \{-2,0,2\}$, then the deficiency indices of $\fD$ are $n_+(\fD) = n_-(\fD)= 1$.
	\item [{\rm (iii)}] If either $\tau_l=0$ or $\tau_r=0$ and the nonzero parameter is not equal to $-2$ or $2$, then the deficiency indices of $\fD$ are $n_+(\fD) = n_-(\fD)= 1$.
\end{itemize}	
\end{cor}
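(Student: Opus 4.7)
The strategy is to combine Proposition~\ref{prop:def_indices_bl} with the eigenvalue counts established in Subsections~\ref{ss-twoleads}, \ref{ss-twoleads2}, \ref{ss-slit} and the multiplicity criterion from Proposition~\ref{c-simple}. Recall that the substitution $\la = \widetilde{\la}-1/2$ used throughout Section~\ref{s-wedge} identifies eigenvalues $\widetilde{\la}$ of $J$ lying in $(-1/2,1/2)$ with solutions $\la$ of the characteristic equation~\eqref{e-2eigenvalues} lying in $(-1,0)$. Proposition~\ref{prop:def_indices_bl} then tells us that $n_+(\fD) = n_-(\fD)$ equals half the number of such eigenvalues, with multiplicities counted.

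For case (i), with $\tau_l = \tau_r = \tau \notin\{-2,0,2\}$, Proposition~\ref{p-2equal} provides exactly one solution in $(-1,-1/2)$ and exactly one in $(-1/2,0)$ when $\w\ne \pi/2$, and no solution at all in $(-1,0)$ when $\w = \pi/2$. In the first subcase the product $\tau_l\tau_r = \tau^2 > 0$ is certainly not equal to $-4$, so Proposition~\ref{c-simple}\,(iii) ensures that both eigenvalues in $(-1/2,1/2)$ are simple and $0\notin\sigma(J)$; dividing by two gives $n_\pm(\fD) = 1$. In the case $\w = \pi/2$, there is no eigenvalue of $J$ in $(-1/2,1/2)$, so $n_\pm(\fD) = 0$ and $\fD$ is self-adjoint.

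For case (ii), with $\tau_l = -\tau_r = \tau\notin\{-2,0,2\}$, Proposition~\ref{p-2opposite} again produces exactly two solutions of the characteristic equation in $(-1,0)$, one in each half. The product is $\tau_l\tau_r = -\tau^2$, and the excluded values $\tau = \pm 2$ are precisely those that would give $\tau_l\tau_r = -4$; since these are ruled out, Proposition~\ref{c-simple}\,(iii) again delivers simplicity of both eigenvalues, and so $n_\pm(\fD) = 1$. For case (iii) the assumption forces $\tau_l\tau_r = 0\ne -4$, so once more simplicity holds by Proposition~\ref{c-simple}\,(iii), and the two eigenvalues furnished by Proposition~\ref{p-slit} yield $n_\pm(\fD) = 1$.

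Overall, there is no substantive obstacle here: every ingredient has already been assembled in the preceding propositions, and the proof is a bookkeeping argument. The only point that warrants care is verifying that the hypothesis $\tau_l\tau_r \ne -4$ of Proposition~\ref{c-simple}\,(iii) holds under the stated exclusions in each case; this guarantees that the eigenvalues counted by Propositions~\ref{p-2equal},~\ref{p-2opposite}, and~\ref{p-slit} are simple, so that the raw count of two yields the final deficiency index $1$ after halving.
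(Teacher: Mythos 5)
Your argument is correct and follows essentially the same route as the paper: the eigenvalue counts from Propositions~\ref{p-2equal}, \ref{p-2opposite}, and~\ref{p-slit} are combined with the simplicity statement in Proposition~\ref{c-simple}\,(iii) and then converted to deficiency indices via Proposition~\ref{prop:def_indices_bl}, with your explicit verification that $\tau_l\tau_r\ne -4$ in each case being a small touch of extra care. The only (shared) imprecision is in case~(iii), where one of $\tau_l,\tau_r$ vanishes and so Proposition~\ref{c-simple} as stated---which assumes both lie in $\RR\setminus\{-2,0,2\}$---does not literally apply; its proof goes through unchanged in this degenerate situation, but the paper itself merely asserts that the eigenvalues are ``again simple'' without citing it there.
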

\begin{proof}
(i) For $\w \ne \frac{\pi}{2}$,  Proposition~\ref{p-2equal} yields that
the operator $J$ has two eigenvalues in the interval $(-1/2,1/2)$, one in each of the intervals $(-1/2,0)$ and $(0,1/2)$. According to Proposition~\ref{c-simple}\,(iii), these eigenvalues are simple. For $\w = \frac{\pi}{2}$,  Proposition~\ref{p-2equal} says that the operator $J$ has no eigenvalues in the interval $(-1/2,1/2)$.
Hence, the claim follows from Proposition~\ref{prop:def_indices_bl}.

(ii) Proposition~\ref{p-2opposite} states that the operator $J$ has two eigenvalues in the interval $(-1/2,1/2)$ and these are each simple by Proposition~\ref{c-simple}\,(iii). The claim then follows from Proposition~\ref{prop:def_indices_bl}.

(iii) Proposition~\ref{p-slit} says that $J$ has two eigenvalues in $(-1/2,1/2)$ that are again simple. As $\tau_l$ and $\tau_r$ are interchangeable in Proposition \ref{p-slit}, the claim follows. 
\end{proof}

%%%%%%%%%%%%%%%%%%%%%%%%%%%%
%%%%%%%%%%%%%%%%%%%%%%%%%%%%
\section{Singular interactions supported on a star graph}\label{s-star}
%%%%%%%%%%%%%%%%%%%%%%%%%%%%
%%%%%%%%%%%%%%%%%%%%%%%%%%%%

We can now consider a generalization of the former model to the singular interaction supported on a star-graph in $\RR^2$. Again, we are able to reduce the computation of the deficiency indices to the spectral analysis of the respective spin-orbit operator by using an orthogonal decomposition. This allows for a careful analysis to be conducted when the star-graph has $N=3$ edges in Subsection \ref{ss-threeleads}. If, in addition, this star-graph with three edges is symmetric, deficiency indices can be computed concretely, see Subsection \ref{ss-threeequalleads}. For symmetric star-graphs in general, an alternative spectral condition is introduced in \ref{ss-altspec} which exploits the connection the Dirac operators have with momentum operators on a certain graph. The spectral analysis of the spin-orbit operator then simplifies significantly and allows us to present in Subsection \ref{ss-22} a configuration of interaction strengths for which the deficiency indices are higher than $(1,1)$.
 
Consider the star-graph with $N$ edges meeting at the origin with $N\ge2$, each with potentially different singular interaction strengths. It is first necessary to alter the previously used notation a bit to denote more sectors. Adjusting from Section~\ref{s-wedge}, denote $\Gamma^r:=\Gamma_1$, $\Gamma^l:=\Gamma_2$ and $\w:=\w_1$. Begin by fixing a sequence of angles $\w_j$, for $j\in\{1,\dots,N-1\}$, whose values are strictly increasing as $j$ increases and such that $\w_{N-1}<2\pi-\w_1$. The $N$ edges are then defined by
\begin{align*}
    \Gamma_1&:=\left\{(r\cos\w_1,-r\sin\w_1)\in\RR^2~:~r>0\right\}, \text{ and } \\
    \Gamma_j&:=\{(r\cos\w_{j-1},r\sin\w_{j-1})\in\RR^2~:~r>0\},
\end{align*}
for $j\in\{2,\dots,N\}$.
The respective star-graph is defined by $\Gamma = \cup_{j=1}^N \Gamma_j$.
Intervals for the angular coordinates are given by
\begin{align*}
    \II_j:=(\w_{j-1},\w_j) \text{ for }j\in\{1,\dots,N\},
\end{align*}
with the convention that $\w_0=-\w_1$ and $\w_{N}=2\pi-\w_1$.
The sectors are likewise described via
\begin{align*}
    \Omega_j:=\{(r\cos\te,r\sin\te)\in\RR^2~:~r>0,~\te\in\II_j\}\subset\RR^2,
\end{align*}
for $j\in\{1,\dots,N\}$. Normal vectors for each $\Gamma_j$, denoted $\nu_j$, are assumed to be unit vectors and have fixed clockwise orientation. 
Spinors $u\in L^2(\RR^2,\CC^2)$ are decomposed into $N$ parts corresponding to the $N$ sectors as
\begin{align*}
u=\bigoplus_{j=1}^N u_j\in L^2(\RR^2,\CC^2) =  \bigoplus_{j=1}^N L^2(\Omega_j,\CC^2),
\end{align*}
where $u_j := u|_{\Omega_j}$.
The Dirac operator with the Lorentz-scalar $\delta$-shell interaction supported on $\Gamma$ acts in the Hilbert space $L^2(\RR^2,\CC^2)$  as
\begin{equation}\label{Dirac_N}
\begin{aligned}
\fD_N u&:= \bigoplus_{j=1}^N\cD u_j , \\
\dom\fD_N&:=\Bigg\{u=\bigoplus_{j=1}^Nu_j\in \bigoplus_{j=1}^N H^1(\Omega_j,\CC^2)~:~\forall j\in\{1,\dots,N\}\\
&-i(\sigma_1\nu_{j,1}+\sigma_2\nu_{j,2})\left(u_{j-1}|_{\Gamma_j}-u_j|_{\Gamma_j}\right)=\frac{1}{2}\tau_j\sigma_3\left(u_{j-1}|_{\Gamma_j}+u_j|_{\Gamma_j}\right)\Bigg\},
\end{aligned}
\end{equation}
using the convention that $u_0=u_N$ to simplify notation. We will denote by $\wt{\fD}_N$ the counterpart of this operator in polar coordinates. In order to make the presentation less technical, only Lorentz-scalar $\delta$-shell interactions are considered, not electrostatic $\delta$-shell interactions. However, it is possible to proceed in the same way as Section \ref{s-wedge} and determine that the spin-orbit operator $J_N$, given below in equation \eqref{e-angular3}, is not self-adjoint unless there are no electrostatic interactions. 

\begin{rem}
The clockwise choice of orientation of the normal vectors is made without loss of generality, as choosing an interaction strength for a coupling parameter $\tau_j$ on the lead $\Gamma_j$ with counterclockwise normal vector is equivalent to choosing $-\tau_j$ as a coupling parameter on $\Gamma_j$ with a clockwise normal vector. 
Hence, to compare results in this section for $N =2$ with Section \ref{s-wedge} where the orientation of normal vectors is different it is necessary to start with $\tau_l = -\tau_2$ and $\tau_r=\tau_1$ in order to make the boundary conditions in equation \eqref{Dirac_N} for $N =2$
equivalent to the boundary conditions
in equation \eqref{e-dirac} for $\eta_l = \eta_r =0$.
\hfill$\diamondsuit$
\end{rem}

The abbreviations $m_j$, $p_j$ and $\eps_j$ will be used in analogy to equation \eqref{e-pm}. These boundary conditions can still be rewritten if we assume that the non-confining case holds on each edge, i.e.~$\tau_j\neq\pm2$ for each $j\in\{1,\dots,N\}$. Explicitly, for $j\in\{1,\dots,N\}$ and using the conventions that $u_0=u_N$ and $\w_0=-\w_1$, we have
\begin{align}\label{e-betterbcsN}
u_{j-1}|_{\Gamma_j}&=\frac{1}{p_j}
\begin{pmatrix}
m_j & e^{-i\w_{j-1}}\tau_j \\
e^{i\w_{j-1}}\tau_j & m_j
\end{pmatrix}
u_j|_{\Gamma_j}.
\end{align}
The matrix in the boundary condition concerning $\Gamma_j$ will be referred to as $M_j$. The operator $\fD_N$ can easily be shown to be symmetric and densely defined in analogy with Proposition \ref{p-symmetric} because these matrices of boundary conditions still adhere to the relations given in equation \eqref{e-adjointsrelation}.

The associated spin-orbit operator acts in the Hilbert space $L^2(\SS^1,\CC^2) \!=\! \bigoplus_{j=1}^N L^2(\II_j,\CC^2)$ as 
\begin{equation}\label{e-angular3}
\begin{aligned}
J_N\f&:=\bigoplus_{j=1}^N
\left(-i\sigma_3\f_j'+\frac12\f_j\right),\\
\dom J_N&=\Bigg\{\f=\bigoplus_{j=1}^N\f_j\in \bigoplus_{j=1}^N H^1(\II_j,\CC^2)\colon
\f_N(2\pi-\w_1) = M_1\f_1(-\w_1),\\
&\qquad\qquad\qquad\qquad\qquad\qquad\f_{j-1}(\w_{j-1})=M_j
\f_j(\w_{j-1}), \forall j\in\{2,\dots,N\} \Bigg\}.
\end{aligned}
\end{equation}

\begin{prop}\label{p-sathree}
Assume that $\tau_j\neq\pm2$ for all $j\in\{1,\dots,N\}$. Then the operator $J_N$ is self-adjoint and has compact resolvent.
\end{prop}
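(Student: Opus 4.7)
The plan is to generalize the proof of Proposition~\ref{p-momentum2} by identifying $J_N-\tfrac12$ with a momentum operator on a suitable compact directed metric graph, and then invoking \cite[Proposition 4.1]{E} for self-adjointness and \cite[Theorem 5.1]{E} for compactness of the resolvent. The underlying graph is constructed exactly as in the $N=2$ case: it has $N$ vertices, one at each angle $\omega_{j-1}$ for $j=1,\dots,N$, with the identification $\omega_0=-\omega_1\sim\omega_N=2\pi-\omega_1$ closing the graph into a combinatorial cycle. For each sector $\II_j=(\omega_{j-1},\omega_j)$, the graph carries two directed edges of length $\omega_j-\omega_{j-1}$: one representing the first component $\varphi_{j,1}$, oriented from $\omega_{j-1}$ to $\omega_j$, and one representing $\varphi_{j,2}$, oriented in the reverse direction. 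Then $-i\sigma_3\frac{d}{d\theta}$ acts as the standard momentum operator $-i\frac{d}{dx}$ on each directed edge, taken with respect to its own orientation.

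The second step is to rewrite the interface conditions \eqref{e-betterbcsN} at each vertex $\omega_{j-1}$ in the local form $\mathcal{U}_j\psi_{\mathrm{in}}=\psi_{\mathrm{out}}$, where $\psi_{\mathrm{in}}\in\CC^2$ collects the two incoming edge-end values $\varphi_{j-1,1}(\omega_{j-1})$ and $\varphi_{j,2}(\omega_{j-1})$, and $\psi_{\mathrm{out}}\in\CC^2$ collects the outgoing values $\varphi_{j-1,2}(\omega_{j-1})$ and $\varphi_{j,1}(\omega_{j-1})$. Solving the two scalar equations coming from $\varphi_{j-1}(\omega_{j-1})=M_j\varphi_j(\omega_{j-1})$ for the outgoing pair in terms of the incoming pair -- which is legitimate because the diagonal entry $m_j/p_j$ of $M_j$ is nonzero thanks to $\tau_j\ne\pm 2$ -- produces the $2\times 2$ matrix
\[
\mathcal{U}_j = \frac{1}{m_j}\begin{pmatrix} e^{i\omega_{j-1}}\tau_j & p_j \\ p_j & -e^{-i\omega_{j-1}}\tau_j \end{pmatrix}.
\]
Because each vertex condition only couples the four edge-ends meeting at that vertex, ordering $\psi_{\mathrm{in}}$ and $\psi_{\mathrm{out}}$ by vertex makes the global coupling matrix block-diagonal: $\mathcal{U}=\mathrm{diag}(\mathcal{U}_1,\dots,\mathcal{U}_N)$ acts on $\CC^{2N}$.

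A short computation then verifies that each $\mathcal{U}_j$ is unitary by using the identity $p_j^2+\tau_j^2=m_j^2$, which follows immediately from $p_j=1-\tau_j^2/4$ and $m_j=1+\tau_j^2/4$. Hence $\mathcal{U}$ is unitary, and \cite[Proposition 4.1]{E} yields self-adjointness of the graph momentum operator, while \cite[Theorem 5.1]{E} delivers compactness of its resolvent since the graph is compact with finitely many edges of finite length; the constant shift by $\tfrac12$ preserves both properties. The main obstacle I expect is the bookkeeping needed to identify incoming and outgoing edge-ends at each vertex and to correctly solve for the outgoing pair; the unitarity check itself is short, and its crux is the identity $m_j^2=p_j^2+\tau_j^2$, which is precisely what makes the purely Lorentz-scalar case tractable, in contrast to the situation with electrostatic interactions ruled out in Proposition~\ref{p-momentum2}.
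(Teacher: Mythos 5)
Your proposal is correct and follows essentially the same route as the paper: identify $J_N-\tfrac12$ with a momentum operator on a compact directed metric graph with $2N$ edges and $N$ vertices, compute the vertex-coupling matrix from \eqref{e-betterbcsN}, verify unitarity via the identity $m_j^2 = p_j^2 + \tau_j^2$, and invoke \cite[Proposition 4.1]{E} and finiteness/compactness of the graph. Your observation that ordering the edge-ends by vertex makes $\cU_N$ block-diagonal with $2\times 2$ blocks $\cU_j$ is a cleaner way to organize the unitarity check than the entrywise computation of $\cU_N\cU_N^*$ done in the paper (which amounts to the same thing); a small quibble is that the invertibility used to solve for the outgoing pair hinges on $m_j\neq 0$, which is automatic ($m_j = 1+\tau_j^2/4\ge 1$), while the hypothesis $\tau_j\neq\pm 2$ is needed so that $p_j\neq 0$ and the boundary conditions \eqref{e-betterbcsN} are well-posed in the first place.
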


\begin{proof}
The operator $J_N-1/2$ can be viewed as a momentum operator on a graph, as in Proposition \ref{p-momentum2}, with $2N$ directed edges, vectors on them denoted by $\psi_j$ for $j\in\{1,\dots,2N$\}, and $N$ vertices: $-\w_1,\w_1,\dots,\w_{N-1}$. If we denote $\w_0=\w_N=-\w_1$, then edges with odd indices are chosen to be oriented from $\w_{j-1}$ to $\w_j$. Edges with even indices are oriented in the reverse direction: going from $\w_j$ to $\w_{j-1}$. In this way, $\psi_{2k-1}$ and $\psi_{2k}$, for $k\in\{1,\dots,N\}$, represent the two components of $\f_k$. The Hamiltonian acting on these vectors is thus $-i\frac{d}{dx}$ and the effect of $\sigma_3$ in $J_N-1/2$ is accounted for by the direction of the edges. Define the vectors
\begin{align*}
    \psi\ti{out}&:=(\psi_1(-\w_1),\psi_2(\w_1),\psi_3(\w_1),\psi_4(\w_2),\psi_5(\w_2),\psi_6(\w_3),\dots,\psi_{2N-1}(\w_{N-1}),\psi_{2N}(2\pi-\w_1)) \\
    \psi\ti{in}&:=(\psi_1(\w_1),\psi_2(-\w_1),\psi_3(\w_2),\psi_4(\w_1),\psi_5(\w_3),\psi_6(\w_2),\dots,\psi_{2N-1}(2\pi-\w_1),\psi_{2N}(\w_{N-1})),
\end{align*}
which list the terminals and origins of the vectors $\psi_j$, respectively. Note that the matrix $M_1$ is identical for the angles $-\w_1$ and $2\pi-\w_1$, which allows for the vertices $-\w_1$ and $2\pi-\w_1$ to be identified and simply denoted by $-\w_1$.

Self-adjoint realizations of this Hamiltonian are in one-to-one correspondence with $2N\times 2N$ unitary matrices $\cU$ which realize $\cU\psi\ti{in}=\psi\ti{out}$ by \cite[Proposition 4.1]{E}. The matrix $\cU_N$
corresponding to $J_N - 1/2$ relies completely on the matrices $M_j$ from equation \eqref{e-betterbcsN} now.  

It is clear that each individual $M_j$ relates two coordinates of $\psi\ti{out}$ to two coordinates of $\psi\ti{in}$; for $M_j$ the outgoing coordinates are $\psi_{2j-2}(\w_{j-1})$ and $\psi_{2j-1}(\w_{j-1})$, while the incoming coordinates are $\psi_{2j-3}(\w_{j-1})$ and $\psi_{2j}(\w_{j-1})$, for $j\in\{1,\dots,N\}$ with $\w_0=-\w_1$. An index of $-k$ is considered as $2N-k$ while an index of $0$ is considered as $2N$ here. Hence, omitting the dependence on $\w_{j-1}$ as this is where all functions are evaluated, we obtain
\begin{align*}
    \psi_{2j-2}&=\frac{e^{i\w_{j-1}}\tau_j}{m_j}\psi_{2j-3}+\frac{p_j}{m_j}\psi_{2j} \\
    \psi_{2j-1}&=\frac{p_j}{m_j}\psi_{2j-3}-\frac{e^{-i\w_{j-1}}\tau_j}{m_j}\psi_{2j}.
\end{align*}
Each $\w_{j-1}$ with $j\in\{1,\dots,N\}$ will thus generate four entries for $\cU_N$:
\begin{equation}\label{e-Uentries}
\begin{aligned}
    (\cU_N)_{(2j-2,2j-3)}&=\frac{e^{i\w_{j-1}}\tau_j}{m_j},\qquad &(\cU_N)_{(2j-2,2j)}&=\frac{p_j}{m_j}, \\
    (\cU_N)_{(2j-1,2j-3)}&=\frac{p_j}{m_j},
    \qquad &(\cU_N)_{(2j-1,2j)}&=-\frac{e^{-i\w_{j-1}}\tau_j}{m_j}.
\end{aligned}
\end{equation}
A direct computation reveals the only non-zero entries of the matrix $\cU_N\cU_N^*$ are on its diagonal and  
\begin{align*}
    (\cU_N\cU_N^*)_{2j-1,2j-1} = (\cU_N\cU_N^*)_{2j-2,2j-2}=\left[\frac{\tau_j}{m_j}\right]^2+\left[\frac{p_j}{m_j}\right]^2= 1,\quad \forall j\in \{1,\dots, N\}. 
\end{align*}
The matrix $\cU_N$ is thus unitary. A similar argument holds for $\cU_N^*\cU_N$, and thus the operator $J_N$ is self-adjoint. Compactness of the embedding of $\bigoplus_{j=1}^N H^1(\II_j,\CC^2)$ into $L^2(\SS^1,\CC^2)$ reveals that the resolvent of $J_N$ is also compact.
\end{proof}

Again, normalized eigenfunctions of the operator $J_N$ are guaranteed to form an orthonormal basis of $L^2(\SS^1,\CC^2)$. Let $\f=\bigoplus_{j=1}^N(\f_{j,1},\f_{j,2})$ and $\widetilde{\la}\in\RR$ satisfy $J_N\f=\widetilde{\la}\f$. Then, subtracting $1/2$ from both sides, $\f$ satisfies
\begin{align*}
    &-i\f_{j,1}'=(\widetilde{\la}-1/2)\f_{j,1}, \\
    &+i\f_{j,2}'=(\widetilde{\la}-1/2)\f_{j,2},
\end{align*}
for each $j\in\{1,\dots,N\}$. As in Section \ref{s-wedge}, setting $\la:=\widetilde{\la}-1/2$ will be convenient. The generic solution for this system of equations is the same as before, shown in equation \eqref{e-genericeigenfunction}, but there are now $2N$ constants: $c_{j,1},c_{j,2}\in\CC$ for $j\in\{1,\dots,N\}$. These generic eigenfunctions can be plugged into the boundary conditions from equation \eqref{e-angular3} with the relevant angles. For $j\in\{2,\dots,N\}$, this yields that the conditions for $\f$ to be an eigenfunction are:
\begin{equation}\label{e-Nmatrixequations}
\begin{aligned}
    c_{j-1,1}&=c_{j,1}\frac{m_j}{p_j}+c_{j,2}\frac{\tau_je^{-i\w_{j-1}(2\la+1)}}{p_j}, \\
    c_{j-1,2}&=c_{j,1}\frac{\tau_je^{i\w_{j-1}(2\la+1)}}{p_j}+c_{j,2}\frac{m_j}{p_j}.
\end{aligned}
\end{equation}
Additional conditions stemming from the matrix $M_1$ are a bit different, as it relates eigenfunctions with angles $2\pi-\w_1$ and $-\w_1$, respectively. The conditions are
\begin{equation}\label{e-Nmatrixequations2}
\begin{aligned}
    c_{N,1}&=c_{1,1}\frac{m_1e^{-i2\pi\la}}{p_1}+c_{1,2}\frac{\tau_1e^{i[\w_1(2\la+1)-2\pi\la]}}{p_1}, \\
    c_{N,2}&=c_{1,1}\frac{\tau_1e^{-i[\w_1(2\la+1)-2\pi\la]}}{p_1}+c_{1,2}\frac{m_1e^{i2\pi\la}}{p_1}.
\end{aligned}
\end{equation}

The described system of equations will have a non-trivial solution if the determinant of the corresponding $2N\times 2N$ matrix vanishes. This matrix, say $\cM_N$, has three non-zero entries in each row, given above. Unfortunately, there seems to be no way to explicitly state the determinant of $\cM_N$ for general $N$, but the case $N=3$ is possible and will be analyzed in Subsection \ref{ss-threeleads}.

However, separation of variables can still continue along the lines of Subsection \ref{ss-orthowedge}. To this end, for $\wt\la\in\RR$, define the spaces
\begin{align*}
    \cF_{\wt\la}^N :=\ker(J_N-\wt\la).
\end{align*}
As in Proposition \ref{p-2bijection}, it is possible to analyze the effect of the operator
$\cS\colon L^2(\SS^1,\CC^2)\rightarrow L^2(\SS^1,\CC^2)$ acting as $\cS\f=(\sigma\cdot\va{e}\ti{rad})\f$  on the functions in $\cF_{\wt\la}^N$. The proof of the following proposition is analogous
to that of Proposition \ref{p-2bijection}.

\begin{prop}\label{p-sbijection}
	The operator $\cS$ is a bijection between $\cF_{\wt\la}^N$ and $\cF_{-\wt\la}^N$ for all $\wt\la\in\RR$.
\end{prop}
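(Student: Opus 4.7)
The plan is to mimic the three-step strategy of Proposition~\ref{p-2bijection}, with the only additional burden being the bookkeeping over $N$ interfaces rather than two. Fix $\wt\la\in\RR$ and set $\la := \wt\la - 1/2$. Given an arbitrary $\f = \bigoplus_{j=1}^N \f_j \in \cF_{\wt\la}^N$, write its generic form
\[
\f_j(\te) = \begin{pmatrix} c_{j,1}e^{i\la\te} \\ c_{j,2}e^{-i\la\te} \end{pmatrix}, \qquad \te \in \II_j,
\]
with constants $c_{j,k}\in\CC$ satisfying the coupling relations \eqref{e-Nmatrixequations}--\eqref{e-Nmatrixequations2}. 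I first compute $\Phi := \cS\f$ explicitly on each sector. Since $(\sigma\cdot\va{e}\ti{rad})$ swaps components and multiplies by $e^{\pm i\te}$, a direct calculation yields
\[
\Phi_j(\te) = \begin{pmatrix} c_{j,2}e^{i\mu\te} \\ c_{j,1}e^{-i\mu\te} \end{pmatrix}, \qquad \mu := -\la - 1,
\]
so that $\mu + 1/2 = -\wt\la$. Differentiating term-by-term then gives $-i\sigma_3\Phi_j' + \tfrac12\Phi_j = -\wt\la\,\Phi_j$ on each interval $\II_j$, confirming that $\Phi$ formally satisfies the eigenvalue equation $J_N\Phi = -\wt\la\,\Phi$.

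The second and most technical step is to verify that $\Phi \in \dom J_N$, i.e.\ that $\Phi$ obeys the boundary conditions in \eqref{e-angular3}. Denote the coefficients of $\Phi_j$ by $\tilde c_{j,1} = c_{j,2}$ and $\tilde c_{j,2} = c_{j,1}$, and the spectral parameter in the eigenfunction template by $\mu$ in place of $\la$. Substituting $(\tilde c_{j,k}, \mu)$ into \eqref{e-Nmatrixequations}--\eqref{e-Nmatrixequations2} and invoking the identities
\[
2\mu + 1 = -(2\la + 1), \qquad e^{\pm 2\pi i \mu} = e^{\mp 2\pi i \la},
\]
one sees that each pair of scalar equations at the interface $\w_{j-1}$ is simply the original pair for $\f$ with its two rows interchanged. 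Hence the system holds for $\Phi$ precisely because it holds for $\f$, so $\Phi \in \cF_{-\wt\la}^N$ and $\Ran(\cS|_{\cF_{\wt\la}^N}) \subset \cF_{-\wt\la}^N$.

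The third step is immediate: since $(\sigma\cdot\va{e}\ti{rad})^2 = I_{\CC^2}$ pointwise on $\SS^1$, the operator $\cS$ is an involution on $L^2(\SS^1,\CC^2)$, which gives injectivity of $\cS|_{\cF_{\wt\la}^N}$. For surjectivity, given $\wt\f \in \cF_{-\wt\la}^N$, apply the first two steps with $\wt\la$ replaced by $-\wt\la$ to conclude $\cS\wt\f \in \cF_{\wt\la}^N$; then $\cS(\cS\wt\f) = \wt\f$ exhibits $\wt\f$ as an image. The main obstacle is the bookkeeping in the second step: although the computation is elementary, one has to track the interaction of the substitution $(c_{j,1}, c_{j,2}, \la) \mapsto (c_{j,2}, c_{j,1}, -\la-1)$ with each matrix $M_j$ in \eqref{e-betterbcsN} at every interface, and in particular to handle the ``wrap-around'' condition \eqref{e-Nmatrixequations2} at $\Gamma_1$ where the factor $e^{\pm 2\pi i\la}$ appears. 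Once this verification is organized as above, the result follows verbatim from the two-edge argument.
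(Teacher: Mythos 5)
Your proof is correct and follows exactly the same strategy as the paper, which simply cites the $N=2$ argument of Proposition~\ref{p-2bijection} as analogous. You correctly carry out the three steps — computing $\cS\f$ componentwise to obtain the template with parameter $\mu=-\la-1$, checking that the substitution $(c_{j,1},c_{j,2},\la)\mapsto(c_{j,2},c_{j,1},-\la-1)$ turns the coupling relations \eqref{e-Nmatrixequations}--\eqref{e-Nmatrixequations2} into themselves with rows swapped (including the wrap-around factors $e^{\pm 2\pi i\mu}=e^{\mp 2\pi i\la}$), and concluding bijectivity via $\cS^2=I$ — which is precisely the intended extension of the two-edge proof.
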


Of course, the operator $\cS$ being bijective means that $\dim\cF_{\wt\la}^N=\dim\cF_{-\wt\la}^N:=n_{\wt\la}$ and $\cF_{\wt\la}^N$ is non-trivial only for $\widetilde{\la}\in\sigma(J_N)$. Hence, the spectrum of $J_N$ must be symmetric about the origin. The multiplicities of $J_N$ can not be controlled as precisely as in the case of the interaction supported on a broken line.

\begin{prop}
	Let $\{\tau_j\}_{j=1}^N$ and $\{\w_j\}_{j=1}^{N-1}$ be as above. Then the following statements hold.
	\begin{itemize}
		\item [{\rm (i)}]	
		$\dim\cF_{\widetilde{\la}}^N=n_{\widetilde{\la}}\in\{1, 2\}$ for all $\widetilde{\la}\in\sigma(J_N)$. 
		\item [{\rm(ii)}] If $0\in\sigma(J_N)$, then $n_0 = 2$.
	\end{itemize}
\end{prop}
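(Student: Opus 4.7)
The plan is to exploit the explicit recursion structure stemming from the boundary conditions \eqref{e-Nmatrixequations} and \eqref{e-Nmatrixequations2}, generalizing the argument of Proposition~\ref{c-simple}\,(i). For each $j\in\{2,\dots,N\}$, the $2\times 2$ matrix
\[
A_j(\la) := \frac{1}{p_j}\begin{pmatrix} m_j & \tau_j e^{-i\w_{j-1}(2\la+1)} \\ \tau_j e^{i\w_{j-1}(2\la+1)} & m_j \end{pmatrix}
\]
appearing in \eqref{e-Nmatrixequations} has determinant $(m_j^2-\tau_j^2)/p_j^2$. Since $\eta_j = 0$ we have $m_j = 1+\tau_j^2/4$ and $p_j = 1-\tau_j^2/4$, so $m_j^2 - \tau_j^2 = p_j^2$ and hence $\det A_j(\la) = 1$. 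First, I would propagate the recursion $(c_{j-1,1},c_{j-1,2})^\top = A_j(\la)(c_{j,1},c_{j,2})^\top$ to express each pair $(c_{j,1},c_{j,2})$ as a $\CC$-linear function of $(c_{N,1},c_{N,2})\in\CC^2$; the closing relation \eqref{e-Nmatrixequations2} then imposes at most two linear constraints on $(c_{N,1},c_{N,2})$. This gives $\dim\cF_{\wt\la}^N\le 2$ and proves (i).

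For (ii), the key is to use the special structure at $\la = -1/2$, where every angular exponential collapses to $\pm 1$: $e^{\pm i\w_{j-1}(2\la+1)} = 1$, $e^{\mp i2\pi\la} = -1$, and $e^{\pm i[\w_1(2\la+1)-2\pi\la]} = -1$. Under these substitutions the recursion matrices from \eqref{e-Nmatrixequations} become
\[
A_j := \frac{1}{p_j}\begin{pmatrix} m_j & \tau_j \\ \tau_j & m_j \end{pmatrix},\quad j=2,\dots,N,
\]
and the closing condition \eqref{e-Nmatrixequations2} reduces to $c_N = -A_1' c_1$ with $A_1' := \frac{1}{p_1}\begin{pmatrix} m_1 & \tau_1 \\ \tau_1 & m_1 \end{pmatrix}$. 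Each of these matrices has the form $\alpha I + \beta \sigma_1$, so they all commute with $\sigma_1$ and with one another, and are simultaneously diagonalized in the basis $\{(1,1)^\top/\sqrt 2,\ (1,-1)^\top/\sqrt 2\}$ with eigenvalues $\mu_j := (m_j+\tau_j)/p_j$ and $\nu_j := (m_j-\tau_j)/p_j$ respectively. Note that $\mu_j\nu_j = (m_j^2-\tau_j^2)/p_j^2 = 1$ by the same computation as in (i).

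Eliminating all intermediate constants, a nonzero element of $\cF_0^N$ exists precisely when
\[
\bigl(I + A_1' A_2 \cdots A_N\bigr)c_N = 0
\]
admits a nontrivial solution. In the common diagonalizing basis this becomes $\mathrm{diag}\bigl(1+\mu_1\cdots\mu_N,\ 1+\nu_1\cdots\nu_N\bigr)\,c_N = 0$. Because $\nu_1\cdots\nu_N = (\mu_1\cdots\mu_N)^{-1}$, the conditions $\mu_1\cdots\mu_N = -1$ and $\nu_1\cdots\nu_N = -1$ are equivalent, so the two diagonal entries vanish simultaneously or not at all. Under the hypothesis $0 \in \sigma(J_N)$ at least one must vanish, hence both do, and the solution space is two-dimensional, yielding $n_0 = 2$.

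The main obstacle is recognizing the simultaneous diagonalizability of all boundary matrices at $\la = -1/2$, an algebraic coincidence due to the vanishing of the phases $e^{i\w_{j-1}(2\la+1)}$ at that particular value. Combined with the determinant identity $\mu_j\nu_j = 1$ inherited from (i), this forces the $\pm 1$ eigenspaces of $\cS|_{\cF_0^N}$ to be simultaneously nontrivial, ruling out a one-dimensional kernel.
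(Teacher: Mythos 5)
Your proof is correct and follows essentially the same route as the paper's. For part (i), you and the paper both propagate the recursion through the $2\times 2$ boundary matrices to express the whole eigenfunction in terms of the two constants on one sector; the paper uses $(c_{1,1},c_{1,2})$ as the free data while you use $(c_{N,1},c_{N,2})$, which is an immaterial difference. For part (ii), the paper defines $\fM := \prod_{j=1}^N\fM_j$ (the product of the transition matrices at $\wt\la=0$, with a sign absorbed into $\fM_N$), notes $\det\fM=1$, and deduces that $1\in\sigma(\fM)$ together with $\det\fM = 1$ forces $\fM = I$. That step tacitly requires $\fM$ to be diagonalizable, which the paper does not spell out. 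Your version makes this explicit: you observe that every boundary matrix at $\la = -1/2$ is of the form $\alpha I + \beta\sigma_1$, so they commute, are simultaneously diagonalized by $\{(1,1)^\top,(1,-1)^\top\}$, and the product therefore has a reciprocal pair of eigenvalues $\mu_1\cdots\mu_N$ and $(\mu_1\cdots\mu_N)^{-1}$. Forcing one of them to equal $-1$ forces the other, and diagonalizability immediately gives the two-dimensional kernel. In short: same idea, but your write-up closes the small diagonalizability gap that the paper leaves implicit, so it is a slightly more careful presentation of the same proof.
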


\begin{proof}
Let $\f$ be an eigenfunction of $J_N$ corresponding to the eigenvalue $\widetilde\la$ with $c_{1,1}$ and $c_{1,2}$ prescribed. Then, the constants $c_{2,1}$ and $c_{2,2}$ are uniquely determined from the boundary condition on $\Gamma_2$, $c_{3,1}$ and $c_{3,2}$ are then determined from the boundary condition on $\Gamma_3$. Repeating this process, eventually all constants $c_{j,1}$ and $c_{j,2}$ describing the function $\f$ are given; leaving only two degrees of freedom for the function $\f$. Hence, the dimension of $\cF_{\wt\la}^N$ is at most two and item $(i)$ is shown.

If $0\in\sigma(J_N)$, then it is necessary to take a closer look at how these constants are determined for $\f\in\ker(J_N)$. Explicitly, multiplying $(c_{j,1},c_{j,2})^T$, for $j=1,\dots,N-1$, by the matrix \begin{align}\label{e-matrixaround}
    \fM_j:=\begin{pmatrix}
    \frac{m_j}{p_j} & -\frac{\tau_j}{p_j} \\
    -\frac{\tau_j}{p_j} & \frac{m_j}{p_j}
    \end{pmatrix},
\end{align}
yields the constants $c_{j+1,1}$ and $c_{j+1,2}$. The matrix $\fM_N$ has the same form as equation \eqref{e-matrixaround} except is multiplied by a factor of $-1$. Define $\fM:=\prod_{j=1}^N\fM_j$ and notice that $\det(\fM)=1$ because $\det(\fM_j)=1$ for $j=1,\dots,N$. Furthermore, $1\in\sigma(\fM)$ because $0\in\sigma(J_N)$ and thus there is a choice of constants $c_{1,1}$ and $c_{1,2}$ such that starting from these constants it is possible to reconstruct a function in $\ker(J_N)$. We conclude that $\fM=I$.
Hence, for any initial choice of the constants $c_{1,1}$ and $c_{1,2}$ we can reconstruct a function in $\ker(J_N)$ 
by recovering the coefficients $c_{j+1,1}$ and $c_{j+1,2}$, $j=1,2,\dots,N$ using the matrices in $\fM_j$ and the reconstruction is consistent thanks to $\fM = I$.
Thus, there are two degrees of freedom within $\ker(J_N)$ corresponding to the initial choice of the two constants $c_{1,1}$ and $c_{1,2}$  and item $(ii)$ is shown.
\end{proof}

% If $0\in\sigma(J_N)$ then there is a function $\f\in\ker J_N$ and Proposition \ref{p-sbijection} says that $\cS\f\in\ker J_N$. It only remains to show that $\f$ and $\cS\f$ are linearly independent. The general form of the eigenfunctions means that it is sufficient to exclude the case where $\cS\f = \alpha\f$ for some $\alpha\in\TT$. Actually, the structure of the mapping $\cS$ implies it is sufficient to exclude the cases $\alpha = \pm 1$.

% If $\cS\f = \f$, then $c_{j,1} = c_{j,2}$ for all $j\in\{1,2,\dots, N\}$. However, this mutual equality of the constants contradicts the boundary conditions that are satisfied by the function in the $\dom J_N$. Analogously, we can exclude the case $\cS\f = -\f$ and item (ii) is shown.

\begin{rem}\label{rem:basis_for_zero2}
	If $0\in\sigma(J_N)$, it is possible to construct an orthonormal basis $\{\f_0^1,\f_0^2\}$ of $\ker J_N$ such that $\f_0^2 = \cS\f_0^1$ by applying the procedure described in Remark~\ref{rem:basis_for_zero}.
	\hfill$\diamondsuit$
\end{rem}

\begin{rem}
It is not possible to prove the simplicity of eigenvalues of $J_N$ in the intervals $(-1/2,0)$ and $(0,1/2)$, as in Proposition \ref{c-simple}. Indeed, double eigenvalues are observed in these intervals for a star-graph with $N=6$ edges described in Subsection \ref{ss-22}.
\hfill$\diamondsuit$
\end{rem}

We now construct an orthogonal decomposition of the Hilbert space $L^2_{\rm pol}(\RR^2,\CC^2) :=\bigoplus_{j=1}^NL^2_{\rm pol}(\Omega_j,\CC^2)$ in terms of the spectral decomposition of the spin-orbit operator $J_N$. For the sake of simplicity, the dependence of many objects on $N$ will be suppressed so that notation can be reused from Subsection \ref{ss-orthowedge}. Double eigenvalues of $J_N$ and the eigenvalue $0$ will again require careful consideration.

Let $\widetilde{\la}\in\sigma(J_N)\cap(0,\infty)$
with $n_{\widetilde\la} = 1$ and $\f_{\widetilde\la}^1\in\cF_{\widetilde\la}^N$ be normalized. Denote
\begin{align*}
\cE_{\widetilde\la}^1 := L^2_r(\RR_+)\otimes{\rm span}\{\f_{\widetilde\la}^1,\cS\f_{\widetilde\la}^1\}.
\end{align*}
Let $\widetilde{\la}\in\sigma(J_N)\cap(0,\infty)$
with $n_{\widetilde\la} = 2$ and
$\{\f^1_{\widetilde\la},\f^2_{\widetilde\la}\}$ be the orthonormal basis of
$\cF_{\widetilde\la}^N$. Denote
\begin{align*}
\cE_{\widetilde\la}^j :=L^2_r(\RR_+)\otimes{\rm span}\{\f_{\widetilde\la}^j,\cS\f_{\widetilde\la}^j\},\qquad j=1,2.
\end{align*}
For $0\in\sigma(J_N)$, let
$\{\f^1_{0},\f^2_{0}\}$ be the orthonormal basis of
$\cF_{0}^N$ constructed as in Remark~\ref{rem:basis_for_zero2} and define
\begin{align*}
\cE_{0} :=L^2_r(\RR_+)\otimes{\rm span}\{\f_{0}^1,\cS\f_{0}^1\}.
\end{align*}
The following orthogonal decomposition now holds if $0\notin\sigma(J_N)$
\begin{align*}
L^2_{\rm pol}(\RR^2,\CC^2)\simeq L^2_r(\RR_+)\otimes
\left[\bigoplus_{j=1}^NL^2(\II_j,\CC^2)\right]
=
\bigoplus_{
	\widetilde\la\in\sigma(J_N)\cap(0,\infty)}\bigoplus_{j=1}^{n_{\widetilde\la}}\cE_{\widetilde\la}^j,
\end{align*}
while if $0\in\sigma(J_N)$ we have a slightly modified decomposition
\begin{align*}
L^2_{\rm pol}(\RR^2,\CC^2)\simeq \cE_0\oplus\left(
\bigoplus_{
	\widetilde\la\in\sigma(J_N)\cap(0,\infty)}\bigoplus_{j=1}^{n_{\widetilde\la}}\cE_{\widetilde\la}^j\right).
\end{align*}
Finally, it is convenient to introduce the
unitary transformations $W_{\wt\la}^j\colon\cE_{\wt\la}^j\to L^2(\RR_+,\CC^2)$, for $\wt\la\in\sigma(J_N)\cap(0,\infty)$ and $1\le j\le n_{\wt\la}$, that act via
\begin{align}\label{eq:WNj}
(W_{\widetilde\la}^j u)(r):=\sqrt{r}
\begin{pmatrix}
\langle u(r,\cdot),\f_{\widetilde\la}^j\rangle_{\SS^1} \\
i\langle u(r,\cdot),\cS\f_{\widetilde\la}^j\rangle_{\SS^1}
\end{pmatrix}.
\end{align}
If $0\in\sigma(J_N)$, then the unitary mapping $W_0\colon\cE_0\rightarrow L^2(\RR_+,\CC^2)$ acts analogously via 
\begin{align*}
(W_{0} u)(r):=\sqrt{r}
\begin{pmatrix}
\langle u(r,\cdot),\f_{0}^1\rangle_{\SS^1} \\
i\langle u(r,\cdot),\cS\f_{0}^1\rangle_{\SS^1}
\end{pmatrix}.
\end{align*}
Also recall the half-line Dirac operators with off-diagonal Coulomb potentials denoted by $\bd_{\wt{\la}}$ given by equation \eqref{eq:1dDirac}. The operator $\fD_N$ can now be decomposed into an orthogonal sum of these operators $\bd_{\wt{\la}}$. The proof of the theorem is completely analogous to that of Theorem~\ref{p-decomp}.
\begin{theo}\label{p-decomp2}
	For $\wt\la\in\sigma(J_N)\cap(0,\infty)$, the operators
\begin{align*}
d_{\wt\la}^j u:=\widetilde{\cD}u, \hspace{3em} \dom d_{\wt\la}^j:=\dom\,\widetilde{\fD}_N\cap\cE_{\wt\la}^j,~~~\text{ for }~1\le j\le n_{\wt\la},
\end{align*}
and, in the case that $0\in\sigma(J_N)$, the operator
\begin{align*}
d_0 u:=\widetilde{\cD}u, \hspace{3em} \dom d_0:=\dom\,\widetilde{\fD}_N\cap\cE_0,
\end{align*}
are well defined in the Hilbert spaces $\cE_{\wt\la}^j$ and $\cE_0$, respectively. The operator $d_{\wt\la}^j$ is unitarily equivalent via $W_{\wt\la}^j$
	to the half-line Dirac operator $\bd_{\wt\la}$ in $L^2(\RR_+,\CC^2)$, while the operator $d_0$ is unitarily equivalent via $W_0$ to $\bd_0$.
	In particular, the decomposition
	\begin{align*}
	\fD_N\simeq
	\begin{cases}\bigoplus_{\wt\la\in
		\sigma(J_N)\cap (0,\infty)}\bigoplus_{j=1}^{n_{\wt\la}}\bd_{\wt\la},& 0\notin\sigma(J_N),\\
	\bd_0\oplus
	\left(\bigoplus_{\wt\la\in\sigma(J_N)\cap (0,\infty)}\bigoplus_{j=1}^{n_{\wt\la}}\bd_{\wt\la}\right),& 0\in\sigma(J_N),
	\end{cases}
	\end{align*}
	holds, the operator $\fD_N$ is closed and its deficiency indices can be computed as 
	\[
	n_{\pm}(\fD_N)=
	\begin{cases}
	\sum_{\wt\la\in\sigma(J_N)\cap(0,\infty)}
	n_{\wt\la}\cdot n_{\pm}(\bd_{\wt\la}),& 0\notin\sigma(J_N),\\
	n_\pm(\bd_0)+\sum_{\wt\la\in\sigma(J_N)\cap(0,\infty)}
	n_{\wt\la}\cdot n_{\pm}(\bd_{\wt\la}),& 0\in\sigma(J_N).
	\end{cases}
	\]
\end{theo}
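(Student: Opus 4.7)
The plan is to mirror the proof of Theorem~\ref{p-decomp} sector by sector, exploiting the fact that both $\f_{\wt\la}^j$ and $\cS\f_{\wt\la}^j$ already encode all $N$ interface conditions through their membership in $\dom J_N$. Fix $\wt\la\in\sigma(J_N)\cap(0,\infty)$ and $1\le j\le n_{\wt\la}$.

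First I would parametrize any $u\in\dom\wt{\fD}_N\cap\cE_{\wt\la}^j$ as
\[
u(r,\te)=\frac{\psi_1(r)}{\sqrt{r}}\f_{\wt\la}^j(\te)-i\frac{\psi_2(r)}{\sqrt{r}}(\cS\f_{\wt\la}^j)(\te),
\]
with radial profiles $\psi_1,\psi_2\colon\RR_+\to\CC$. The orthogonality and non-degeneracy relations from~\eqref{eq:fSf} extend directly to the star-graph setting because they are verified piecewise on each angular interval $\II_k$ using the explicit form $\f_{\wt\la}^j|_{\II_k}=(c_{k,1}e^{i\la\te},c_{k,2}e^{-i\la\te})^\top$ with $\la=\wt\la-1/2$. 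Combined with the polar expansion of the gradient, this translates $u\in\bigoplus_{k=1}^NH^1_{\rm pol}(\Omega_k,\CC^2)$ into the same radial conditions as in Theorem~\ref{p-decomp}, which the Hardy-type estimates from~\cite[Propositions 2.2 and 2.4]{CP} convert into $(\psi_1,\psi_2)\in\dom\bd_{\wt\la}$ as defined in~\eqref{eq:1dDirac}.

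Next I would verify invariance $\wt{\fD}_N(\dom\wt{\fD}_N\cap\cE_{\wt\la}^j)\subset\cE_{\wt\la}^j$ by applying the polar Dirac expression~\eqref{e-polaraction} directly. Using that $(\sigma\cdot\va{e}\ti{rad})$ anticommutes with $\sigma_3\partial_\te$ on each sector and that $(J_N-\tfrac12)\f_{\wt\la}^j=\la\f_{\wt\la}^j$ there, a short calculation gives
\[
\wt{\fD}_N u=\frac{1}{\sqrt{r}}\left[-i(\cS\f_{\wt\la}^j)\left(\partial_r\psi_1-\frac{\wt\la}{r}\psi_1\right)+\f_{\wt\la}^j\left(-\partial_r\psi_2-\frac{\wt\la}{r}\psi_2\right)\right],
\]
which manifestly lies in $\cE_{\wt\la}^j$, and whose radial profile is exactly $\bd_{\wt\la}(\psi_1,\psi_2)^\top$. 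This yields $W_{\wt\la}^j d_{\wt\la}^j(W_{\wt\la}^j)^{-1}=\bd_{\wt\la}$; the treatment of $d_0$ is identical using the orthonormal basis $\{\f_0^1,\cS\f_0^1\}$ from Remark~\ref{rem:basis_for_zero2}.

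The most delicate step, and the only one that genuinely differs from the broken-line case, is verifying compatibility of the radial--angular split with the $N$ interface conditions on the rays $\Gamma_k$. I would address this by observing that each condition in~\eqref{Dirac_N} is pointwise in $r$ and linear in the two spinor components, so evaluating it at fixed $r>0$ reduces precisely to the corresponding angular condition in~\eqref{e-angular3}, which $\f_{\wt\la}^j\in\cF_{\wt\la}^N$ and $\cS\f_{\wt\la}^j\in\cF_{-\wt\la}^N$ satisfy by construction and by Proposition~\ref{p-sbijection}. Hence the $N$-ray interface conditions for $u$ are automatically inherited from those for the angular eigenfunctions, independently of the scalar factors $\psi_1,\psi_2$. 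With this in place, the orthogonal decomposition of $L^2_{\rm pol}(\RR^2,\CC^2)$ furnished by the spectral theorem applied to the self-adjoint compact-resolvent operator $J_N$ (Proposition~\ref{p-sathree}) yields the claimed decomposition of $\fD_N$; closedness of $\fD_N$ then follows from the closedness of each summand $\bd_{\wt\la}$ (Appendix~\ref{app}), and the deficiency index formula is read off from~\cite[Theorems~1.1 and 1.2(i)]{CP}, which give $n_\pm(\bd_{\wt\la})=0$ for $\wt\la\ge 1/2$ and $n_\pm(\bd_{\wt\la})=1$ for $0\le\wt\la<1/2$, summed against the multiplicities $n_{\wt\la}$.
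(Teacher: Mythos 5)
Your proposal follows essentially the same route as the paper, which explicitly states that the proof of Theorem~\ref{p-decomp2} is ``completely analogous'' to that of Theorem~\ref{p-decomp}: you parametrize $u\in\dom\widetilde{\fD}_N\cap\cE_{\wt\la}^j$ via the radial profiles $\psi_1,\psi_2$, extend the orthogonality relations~\eqref{eq:fSf} piecewise to each angular interval, invoke the Hardy-type estimates of Cassano--Pizzichillo to translate the $H^1_{\rm pol}$ membership into $\psi_1,\psi_2\in\dom\bd_{\wt\la}$, verify invariance and compute the intertwining $W_{\wt\la}^j d_{\wt\la}^j(W_{\wt\la}^j)^{-1}=\bd_{\wt\la}$. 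The one place you go beyond the paper's (terse) argument is the explicit verification that the $N$ ray-interface conditions in~\eqref{Dirac_N} are automatically satisfied for any radial profiles $\psi_1,\psi_2$, because at each fixed $r$ they reduce linearly to the angular conditions in~\eqref{e-angular3} already built into $\f_{\wt\la}^j$ and $\cS\f_{\wt\la}^j$; this is the observation that actually justifies the parametrization step, and it is useful to have it spelled out. That said, it is worth noting the deficiency-index formula stated in the theorem is just additivity of deficiency indices over an orthogonal sum of closed symmetric operators; the identification $n_\pm(\bd_{\wt\la})=1$ for $\wt\la\in[0,1/2)$ and $=0$ otherwise is the content of Proposition~\ref{prop:def_indices_bl2}, not of this theorem, so you are proving slightly more than is claimed here.
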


The deficiency indices of $\fD_N$ can now be characterized by arguing analogously to Proposition~\ref{prop:def_indices_bl}.

\begin{prop}\label{prop:def_indices_bl2}
	The deficiency indices $n_\pm(\fD_N)$ of the Dirac operator $\fD_N$ are both equal to half of the number of eigenvalues of the spin-orbit operator $J_N$ lying in the interval $(-1/2,1/2)$ with multiplicities taken into account.
\end{prop}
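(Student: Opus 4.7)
The plan is to derive the proposition as a direct consequence of Theorem~\ref{p-decomp2} combined with standard facts about the half-line Dirac operator $\bd_{\wt\la}$ appearing in the decomposition, following essentially the same blueprint as in the proof of Proposition~\ref{prop:def_indices_bl}. The strategy is purely bookkeeping: an orthogonal sum of densely defined symmetric operators has deficiency indices equal to the sum of the deficiency indices of its summands, and each summand appearing in Theorem~\ref{p-decomp2} has known deficiency indices.

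First, I would record what is already known about the summands. By~\cite[Theorem 1.1]{CP}, the operator $\bd_{\wt\la}$ is self-adjoint for every $\wt\la \geq 1/2$, so $n_\pm(\bd_{\wt\la}) = 0$ in that range. By~\cite[Theorem 1.2(i)]{CP}, $\bd_{\wt\la}$ has deficiency indices $(1,1)$ for $\wt\la \in (0,1/2)$, and the classical analysis of the free half-line Dirac operator shows that $\bd_0$ also has deficiency indices $(1,1)$. In summary, $n_\pm(\bd_{\wt\la}) = 1$ if $\wt\la \in [0,1/2)$ and $n_\pm(\bd_{\wt\la}) = 0$ otherwise.

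Plugging these values into the two cases of Theorem~\ref{p-decomp2} and pruning the indices $\wt\la \geq 1/2$ from the sum, one obtains
\[
n_\pm(\fD_N) = \sum_{\wt\la \in \sigma(J_N)\cap (0,1/2)} n_{\wt\la} \quad \text{if } 0 \notin \sigma(J_N),
\]
and
\[
n_\pm(\fD_N) = 1 + \sum_{\wt\la \in \sigma(J_N)\cap (0,1/2)} n_{\wt\la} \quad \text{if } 0 \in \sigma(J_N).
\]
At this stage I would invoke the bijection $\cS\colon \cF_{\wt\la}^N \to \cF_{-\wt\la}^N$ of Proposition~\ref{p-sbijection}, giving $n_{\wt\la} = n_{-\wt\la}$ for every $\wt\la \in \RR$, so that the total number of eigenvalues of $J_N$ in $(-1/2,1/2)$ (with multiplicities) is $2\sum_{\wt\la \in \sigma(J_N)\cap (0,1/2)} n_{\wt\la}$ when $0 \notin \sigma(J_N)$ and $n_0 + 2\sum_{\wt\la \in \sigma(J_N)\cap (0,1/2)} n_{\wt\la}$ when $0 \in \sigma(J_N)$. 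In the latter case $n_0 = 2$ by the previous proposition, and dividing by two matches the formula for $n_\pm(\fD_N)$ in both cases.

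There is no genuine obstacle here: the statement is essentially a corollary of the orthogonal decomposition and~\cite{CP}. The only subtle point worth flagging is the bookkeeping at the eigenvalue $0$, where $n_0 = 2$ but the summand $\bd_0$ appears only once in the decomposition of Theorem~\ref{p-decomp2} (since $\cE_0$ is built from the single pair $\{\f_0^1, \cS\f_0^1\}$). The factor of one $\bd_0$ contributing one unit to $n_\pm(\fD_N)$ is precisely what is needed to balance against the double multiplicity of the eigenvalue $0$ of $J_N$ when forming the "half the number of eigenvalues" count.
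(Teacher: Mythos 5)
Your proof is correct and follows essentially the same blueprint the paper invokes by referring the reader back to Proposition~\ref{prop:def_indices_bl}: read off the deficiency indices of the half-line summands $\bd_{\wt\la}$ from \cite{CP}, feed them into the formula in Theorem~\ref{p-decomp2}, and use the $\cS$-symmetry of Proposition~\ref{p-sbijection} to convert the resulting sum into half the eigenvalue count in $(-1/2,1/2)$. In fact you make the bookkeeping at $\wt\la = 0$ (where $n_0=2$ but $\bd_0$ appears only once) more explicit than the paper does.
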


In the most general case, the deficiency indices of $\fD_N$ can be estimated from above by using perturbation theory.

\begin{prop}\label{prop:def_bnd2}
	For any vector of angles $\{\w_j\}_{j=1}^{N-1}$ satisfying the above assumptions and for $\tau_j \in\RR\setminus\{-2,2\}$ for all $j\in\{1,2,\dots,N\}$, the operator $J_N$ has at most $2N$ eigenvalues in the interval $(-1/2,1/2)$, with multiplicities taken into account. In particular, the deficiency indices of the Dirac operator $\fD_N$ satisfy $n_+(\fD_N) = n_-(\fD_N) \le N$.
\end{prop}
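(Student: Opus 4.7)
The plan is to bound the number of eigenvalues of $J_N$ in $(-1/2,1/2)$ via a finite-rank resolvent perturbation argument that compares $J_N$ with the \emph{free} spin-orbit operator on the circle. Once this count is controlled, the claim $n_\pm(\fD_N) \le N$ follows directly from Proposition~\ref{prop:def_indices_bl2}.

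Let $J_N^0$ denote the operator defined as in~\eqref{e-angular3} but with all coupling constants $\tau_j$ set to zero. Then each matrix $M_j$ in~\eqref{e-betterbcsN} reduces to the identity, the matching conditions become plain continuity across every vertex, and $J_N^0$ is simply $-i\sigma_3\partial_\theta + 1/2$ on $H^1(\SS^1,\CC^2)$ with periodic boundary conditions. A direct eigenfunction computation (the two components are pure exponentials, and periodicity forces $\wt\la - 1/2 \in \ZZ$) yields $\sigma(J_N^0) = \ZZ + 1/2$, so in particular $\sigma(J_N^0)\cap(-1/2,1/2) = \varnothing$. Next, observe that $J_N$ and $J_N^0$ are both self-adjoint extensions of the common symmetric operator $J_N^{\min}$ acting as $-i\sigma_3\partial_\theta + 1/2$ on the domain $\bigoplus_{j=1}^N H^1_0(\II_j,\CC^2)$, i.e., on spinors that vanish at both endpoints of every arc (so that the matching conditions in~\eqref{e-angular3}, which involve only the vertex traces, are automatically satisfied for any choice of $\tau_j$). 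Computing interval by interval, the scalar first-order operators $\mp i\partial_\theta$ on $H^1_0(\II_j)$ each have deficiency indices $(1,1)$ (with deficiency functions $e^{\mp\theta}$), so the two-component operator on each arc has deficiency $(2,2)$ and $J_N^{\min}$ has deficiency indices $(2N,2N)$.

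Krein's resolvent formula then yields
\[
\operatorname{rank}\bigl((J_N - \mu)^{-1} - (J_N^0 - \mu)^{-1}\bigr) \le 2N
\qquad \text{for every } \mu \in \rho(J_N)\cap\rho(J_N^0).
\]
A standard min--max / spectral-projection argument translates this into the additive estimate
\[
\bigl|N_{J_N}\bigl((a,b)\bigr) - N_{J_N^0}\bigl((a,b)\bigr)\bigr| \le 2N
\]
for the eigenvalue counting functions on any open interval $(a,b)\subset\RR$. Applied to $(-1/2,1/2)$, where the reference operator has no spectrum, this yields at most $2N$ eigenvalues of $J_N$ in that interval, counted with multiplicity, and invoking Proposition~\ref{prop:def_indices_bl2} completes the proof. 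The main technical point is the passage from the rank bound on the resolvent difference to the additive eigenvalue estimate on intervals — a standard but slightly delicate piece of abstract spectral theory; the identification of $J_N^{\min}$ as a common symmetric restriction and the determination of its deficiency indices is, by contrast, entirely routine.
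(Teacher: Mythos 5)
Your argument is essentially the paper's: both identify the common closed symmetric restriction $\bigoplus_j H^1_0(\II_j,\CC^2)\ni\varphi\mapsto -i\sigma_3\varphi'+\varphi/2$ with deficiency indices $(2N,2N)$, compare $J_N$ to the free periodic extension with spectrum $\ZZ+1/2$ (hence empty in $(-1/2,1/2)$), and invoke the fact that two self-adjoint extensions of a symmetric operator with deficiency indices $(m,m)$ can have spectra differing by at most $m$ eigenvalues in any interval, then appeal to Proposition~\ref{prop:def_indices_bl2}. The only difference is that where you sketch a first-principles derivation of this abstract perturbation fact via Krein's resolvent formula and a min--max argument (a step you rightly flag as the delicate one), the paper simply cites it as a known theorem from Weidmann's book.
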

\begin{proof}
	Consider the symmetric operator
	\[
		A\varphi := -i\sigma_3\varphi' +\frac{\varphi}{2},\qquad\dom A := \bigoplus_{j=1}^N H^1_0(\II_j,\CC^2), 
	\] 
	which is closed and densely defined in the Hilbert space $L^2(\SS^1,\CC^2)$. The operator $H^1_0(\II_j)\ni\varphi\mapsto -i\varphi'$ acting in the Hilbert space $L^2(\II_j)$ has deficiency indices $(1,1)$, so the deficiency indices of the operator $A$ can be identified as $(2N,2N)$.
	
Consider the self-adjoint extension of the symmetric operator $A$ given by
	\[
		B\varphi := -i\sigma_3\varphi' +\frac{\varphi}{2},\qquad\dom B := H^1(\mathbb{S}^1,\CC^2).
	\]  
	The spectrum of $B$ can be explicitly computed as $\sigma(B) = \ZZ +\frac12$ and, in particular, $\sigma(B)\cap (-\frac12,\frac12) = \varnothing$. The spin-orbit operator $J_N$ is also a self-adjoint extension of $A$ though, so we conclude from~\cite[Satz 10.18]{W00} that
	the number of eigenvalues of $J_N$ in the interval $(-1/2,1/2)$, with multiplicities taken into account, is at most $2N$. Finally, the upper bound on the deficiency indices for $\fD_N$ follows by applying Proposition~\ref{prop:def_indices_bl2}.
\end{proof}

%%%%%%%%%%%%%%%%%%%%%%%%%%%%%
%%%%%%%%%%%%%%%%%%%%%%%%%%%%%
\subsection{Example: star-graphs with three edges}\label{ss-threeleads}
%%%%%%%%%%%%%%%%%%%%%%%%%%%%%
%%%%%%%%%%%%%%%%%%%%%%%%%%%%% 

The explicit analysis of the spin-orbit operator in Subsections \ref{ss-twoleads} and \ref{ss-twoleads2} is not feasible for general star-graphs. In this subsection, however, we lay the groundwork for such an analysis when the star-graph has $N=3$ edges. This analysis will be sufficient to determine deficiency indices of symmetric star-graphs with special configurations of interaction parameters in Subsection \ref{ss-threeequalleads}.

For simplicity, we denote $\w_1=\w$. General eigenfunctions must satisfy boundary conditions from the matrices $M_j$, for $j=1,2,3$, which can be written down by using equation \eqref{e-Nmatrixequations}. The notation $p_j$ and $m_j$ will denote the obvious analogs of equation \eqref{e-pm} for $j\in\{1,2,3\}$. Explicitly, eigenfunctions must satisfy:
\begin{equation}\label{e-3matrixequations}
    \begin{aligned}
    c_{1,1}&=c_{3,1}\frac{m_1e^{i2\pi\la}}{p_1}-c_{3,2}\frac{\tau_1e^{i[\w(2\la+1)-2\pi\la]}}{p_1}, \\
    c_{1,2}&=-c_{3,1}\frac{\tau_1e^{-i[\w(2\la+1)-2\pi\la]}}{p_1}+c_{3,2}\frac{m_1e^{-i2\pi\la}}{p_1}, \\
    c_{1,1}&=c_{2,1}\frac{m_2}{p_2}+c_{2,2}\frac{\tau_2e^{-i\w(2\la+1)}}{p_2}, \\
    c_{1,2}&=c_{2,1}\frac{\tau_2e^{i\w(2\la+1)}}{p_2}+c_{2,2}\frac{m_2}{p_2}, \\ 
    c_{2,1}&=c_{3,1}\frac{m_3}{p_3}+c_{3,2}\frac{e^{-i\w_2(2\la+1)}\tau_3}{p_3}, \\
    c_{2,2}&=c_{3,1}\frac{e^{i\w_2(2\la+1)}\tau_3}{p_3}+c_{3,2}\frac{m_3}{p_3}.
\end{aligned}
\end{equation}
Note that the first two equations do not match the two equations given by equation \eqref{e-Nmatrixequations2} but are easily shown to be equivalent. This system of equations can be used to find the eigenvalues of the operator $J_3$. 

\begin{prop}
Let $\tau_j\neq\pm2$ for any $j\in\{1,2,3\}$ and $J_3$ be given by equation \eqref{e-angular3}. Then,  $\la$ is an eigenvalue of $J_3-1/2$
if and only if it satisfies the equation
\begin{equation}\label{e-3eigenvalues}
\begin{aligned}
0=p_1p_2p_3&-m_1m_2m_3\cos(2\pi\la)-m_1\tau_2\tau_3\cos[(\w_2-\w)(2\la+1)-2\pi\la] \\
&-m_2\tau_1\tau_3\cos[-(\w+\w_2)(2\la+1)+2\pi\la]-m_3\tau_1\tau_2\cos[2\w(2\la+1)-2\pi\la].
\end{aligned}
\end{equation}
\end{prop}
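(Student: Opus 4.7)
The plan is to imitate the proof of Proposition~\ref{p-eigenvalues2}, but now for the $6\times 6$ system \eqref{e-3matrixequations}. Given an eigenvalue $\lambda$ of $J_3-1/2$, any eigenfunction $\varphi$ must be of the generic form \eqref{e-genericeigenfunction} on each of the three sectors, and thus is fully determined by the six complex constants $(c_{1,1},c_{1,2},c_{2,1},c_{2,2},c_{3,1},c_{3,2})^{\top}\in\CC^6$. The six linear equations in \eqref{e-3matrixequations} arise from matching the boundary conditions across $\Gamma_2$, $\Gamma_3$, and $\Gamma_1$, respectively. Rewriting them as a homogeneous linear system $\cM_3 c = 0$, we conclude that $\lambda$ is an eigenvalue of $J_3-1/2$ if and only if $\det\cM_3 = 0$.

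I would then read off the coefficient matrix directly from \eqref{e-3matrixequations} in the form
\[
	\cM_3=\begin{pmatrix}
	1 & 0 & 0 & 0 & -\frac{m_1e^{i2\pi\lambda}}{p_1} & \frac{\tau_1e^{i[\w(2\lambda+1)-2\pi\lambda]}}{p_1}\\
	0 & 1 & 0 & 0 & \frac{\tau_1 e^{-i[\w(2\lambda+1)-2\pi\lambda]}}{p_1} & -\frac{m_1e^{-i2\pi\lambda}}{p_1}\\
	1 & 0 & -\frac{m_2}{p_2} & -\frac{\tau_2 e^{-i\w(2\lambda+1)}}{p_2} & 0 & 0\\
	0 & 1 & -\frac{\tau_2 e^{i\w(2\lambda+1)}}{p_2} & -\frac{m_2}{p_2} & 0 & 0\\
	0 & 0 & 1 & 0 & -\frac{m_3}{p_3} & -\frac{\tau_3 e^{-i\w_2(2\lambda+1)}}{p_3}\\
	0 & 0 & 0 & 1 & -\frac{\tau_3 e^{i\w_2(2\lambda+1)}}{p_3} & -\frac{m_3}{p_3}
	\end{pmatrix}.
\]
The natural $2\times 2$ block structure suggests computing the determinant by Laplace expansion along, say, the last two columns, or equivalently by using the identity $c_{1}=\wt M_2 c_2$, $c_{2}=\wt M_3 c_3$, and $c_1 = \wt M_1 c_3$, where the matrices $\wt M_j$ are obtained from the rows of \eqref{e-3matrixequations} concerning $\Gamma_j$; then $\det\cM_3=0$ is equivalent to $\det(\wt M_2\wt M_3-\wt M_1)=0$, a $2\times 2$ determinant.

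Either way, one performs the algebra, groups the resulting six summands according to the three trigonometric combinations that appear, and uses Euler's formula $e^{i\alpha}+e^{-i\alpha}=2\cos\alpha$ together with $m_j^2-\tau_j^2\cdot(\text{prefactor})$ simplifications coming from $p_jm_j=1-\tfrac{1}{16}\eps_j^2$ and $p_j^2-m_j^2 = \eps_j$. After dividing through by the overall factor coming from $p_1p_2p_3$ in the denominators, the determinant condition reduces to \eqref{e-3eigenvalues}.

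The main obstacle is purely the bookkeeping of this computation: eighteen cross-terms emerge, and they must be collapsed into the four cosines appearing in \eqref{e-3eigenvalues}. To keep the calculation under control I would exploit the symmetry of the expression under cyclic relabeling $(1,2,3)\mapsto(2,3,1)$ combined with the corresponding shift of $\w_{j-1}$, which predicts in advance both the structure of the four terms (the $p_1p_2p_3$ constant, one pure $m_1m_2m_3$ term with argument $2\pi\lambda$, and three $m_i\tau_j\tau_k$ terms whose arguments are linear combinations of $\w$, $\w_2$, and $2\pi\lambda$ matching the arcs bounded by the corresponding pair of rays) and the signs. Verification at a single test point, e.g.\ $\tau_2=\tau_3=0$ to reduce to the ray case of Proposition~\ref{p-slit} or $\w_2=2\pi-\w$ to effectively collapse to the $N=2$ case and recover \eqref{e-2eigenvalues}, then serves as a sanity check on the final formula.
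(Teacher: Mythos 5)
Your proposal is correct and takes essentially the same route as the paper: set up the generic eigenfunction on each sector, match boundary conditions to obtain the $6\times 6$ homogeneous system \eqref{e-3matrixequations}, and set $\det\cM_3=0$, which after simplification gives \eqref{e-3eigenvalues}. The paper simply states "a tedious calculation yields the desired expression," whereas your reduction $\det\cM_3=\pm\det(\wt M_2\wt M_3-\wt M_1)$ is a sensible concrete way to organize that calculation.
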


\begin{proof}
The system of equations given by  \eqref{e-3matrixequations} has a solution if and only if the following determinant vanishes
\begin{align*}
    \begin{vmatrix}
    1 & 0 & 0 & 0 & -\frac{m_1e^{i2\pi\la}}{p_1} & \frac{\tau_1e^{i[\w(2\la+1)-2\pi\la]}}{p_1} \\
    0 & 1 & 0 & 0 & \frac{\tau_1e^{-i[\w((2\la+1)-2\pi\la]}}{p_1} & -\frac{m_1e^{-i2\pi\la}}{p_1} \\
    1 & 0 & -\frac{m_2}{p_2} & -\frac{\tau_2e^{-i\w(2\la+1)}}{p_2} & 0 & 0 \\
    0 & 1 & -\frac{\tau_2e^{i\w(2\la+1)}}{p_2} & -\frac{m_2}{p_2} & 0 & 0 \\
    0 & 0 & 1 & 0 & -\frac{m_3}{p_3} & -\frac{\tau_3e^{-i\w_2(2\la+1)}}{p_3} \\
    0 & 0 & 0 & 1 & -\frac{\tau_3e^{i\w_2(2\la+1)}}{p_3} & -\frac{m_3}{p_3}
    \end{vmatrix}.
\end{align*}
A tedious calculation yields the desired expression.
\end{proof}

There are several consequences that can immediately be drawn from equation \eqref{e-3eigenvalues}. First, if $\tau_3=0$ then the equation collapses and is identical to equation \eqref{e-2eigenvalues}, representing the broken line after replacing $\tau_2$ by $-\tau_2$. This replacement is needed because the convention for the direction of the normal vectors was different for the setting of the broken line. Second, the contributions of the parameters $\w$ and $\w_2$ are conveniently represented: the values $2\w$, $\w_2-\w$ and $2\pi-\w-\w_2$ are the angles of the three sectors, respectively. Finally, despite the somewhat simple looking nature of equation \eqref{e-3eigenvalues}, we will be forced again to analyze more special cases in order to recover concrete results.

%%%%%%%%%%%%%%%%%%%%%%%%%%%%%
%%%%%%%%%%%%%%%%%%%%%%%%%%%%%
\subsection{Example: symmetric star-graph with three edges}\label{ss-threeequalleads}
%%%%%%%%%%%%%%%%%%%%%%%%%%%%%
%%%%%%%%%%%%%%%%%%%%%%%%%%%%% 

This subsection further analyzes Dirac operators with singular interactions on star-graphs with $N=3$ edges with particular configurations of parameters. In particular, we attempt to determine the number of eigenvalues
for $J_3$, with multiplicities taken into account, lying in the interval $(-1/2,1/2)$. This knowledge will allow us to invoke Proposition~\ref{prop:def_indices_bl2} and compute the deficiency indices of $\fD_3$ with the chosen parameters.

A natural example is when all three edges of the star-graph are equally spaced in terms of angles. Hence, let $\w=\pi/3$ and $\w_2=\pi$ so that each of the three sectors has an opening angle of $2\pi/3$. Algebraic difficulties can be further simplified by letting $\tau_1 = \tau_2 = \tau_3 = \tau$ so that the singular interactions strengths on each edge are equal. Equation \eqref{e-3eigenvalues} then reduces to 
\begin{align}\label{e-threeequal}
    (4+\tau^2)^3\cos(2\pi\la)-(4-\tau^2)^3=48(4+\tau^2)\tau^2\cos\left[\frac{\pi}{3}(2\la+1)\right].
\end{align}

This equation \eqref{e-threeequal} can now be analyzed in the same way as equations \eqref{e-2equal} and \eqref{e-2opposite} from Subsections \ref{ss-twoleads} and \ref{ss-twoleads2}, respectively.

\begin{prop}\label{p-3equal}
Let the above assumptions hold and $\tau\neq\pm 2$. If $\tau<-2\sqrt{3}$ or $\tau>2\sqrt{3}$, then there exists exactly one $\la_1\in(-1,-1/2)$ and one $\la_2\in(-1/2,0)$ that satisfy equation~\eqref{e-threeequal}, while
$\la =-1/2$ does not satisfy~\eqref{e-threeequal}.
If $\tau\in[-2\sqrt{3},2\sqrt{3}]$, then there are no $\la\in(-1,0)$ satisfying~\eqref{e-threeequal}.
\end{prop}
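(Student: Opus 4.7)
The plan is to follow the same template as Propositions~\ref{p-2equal} and~\ref{p-2opposite} --- symmetry about $\lambda=-1/2$ plus sign analysis at the endpoints --- but to add an extra monotonicity ingredient, since the a priori bound from Proposition~\ref{prop:def_bnd2} (which in this setting only limits the number of zeros on $(-1/2,0)$ by three) is no longer sharp enough for $N=3$.

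Define $F_\tau(\lambda)$ to be the difference between the two sides of~\eqref{e-threeequal}. Using evenness and $2\pi$-periodicity of $\cos$, one verifies $F_\tau(-1-\lambda) = F_\tau(\lambda)$, so $F_\tau$ is symmetric about $\lambda=-1/2$; it therefore suffices to count zeros on $(-1/2,0)$ and handle $\lambda=-1/2$ separately. I would then evaluate $F_\tau$ at the endpoints. The value $F_\tau(-1/2)$ is a sum of manifestly non-positive pieces, hence strictly negative for every admissible $\tau$, so $\lambda=-1/2$ is never a solution. A short simplification gives $F_\tau(0) = 2\tau^4(\tau^2-12)$, whose sign changes exactly at $|\tau|=2\sqrt{3}$.

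The heart of the proof --- and the step I expect to be the main obstacle --- is establishing strict monotonicity of $F_\tau$ on $(-1/2,0)$. Differentiating termwise yields
\[
F'_\tau(\lambda) = -2\pi(4+\tau^2)^3\sin(2\pi\lambda) + 32\pi(4+\tau^2)\tau^2\sin\left[\tfrac{\pi}{3}(2\lambda+1)\right].
\]
On $(-1/2,0)$ one has $2\pi\lambda \in (-\pi,0)$, hence $\sin(2\pi\lambda) < 0$, making the first summand strictly positive; simultaneously $\tfrac{\pi}{3}(2\lambda+1) \in (0,\pi/3)$, so the second summand is also strictly positive. Therefore $F'_\tau > 0$ on $(-1/2,0)$, so $F_\tau$ is strictly increasing there and has at most one zero on this interval.

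Combining the ingredients finishes the argument. For $|\tau| > 2\sqrt{3}$, the signs $F_\tau(-1/2) < 0 < F_\tau(0)$ and strict monotonicity produce exactly one zero $\lambda_2 \in (-1/2,0)$, and the symmetry $F_\tau(-1-\lambda) = F_\tau(\lambda)$ reflects it to a unique $\lambda_1 \in (-1,-1/2)$. For $\tau \in [-2\sqrt{3},2\sqrt{3}]$ with $\tau\ne\pm 2$, monotonicity together with $F_\tau(0) \le 0$ forces $F_\tau < 0$ throughout $(-1/2,0)$; combined with $F_\tau(-1/2) < 0$ and symmetry, this yields no zeros on $(-1,0)$.
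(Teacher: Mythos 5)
Your proof is correct, and it takes a genuinely different and noticeably cleaner route than the paper's. The paper's argument explicitly solves equation~\eqref{e-threeequal} (via what it itself describes as a ``surprisingly complicated'' computation) and presents the roots as a list of arctangent expressions, then inspects which of them can land in $(-1,0)$ depending on $\tau$. Your approach replaces all of that with three elementary observations: the symmetry $F_\tau(-1-\lambda)=F_\tau(\lambda)$ about $\lambda=-1/2$, strict monotonicity of $F_\tau$ on $(-1/2,0)$ (your $F'_\tau$ is correct, and both summands are manifestly nonnegative there with the first strictly positive, since $\sin(2\pi\lambda)<0$ and $\tfrac{\pi}{3}(2\lambda+1)\in(0,\pi/3)$), and the endpoint values $F_\tau(-1/2)=-128-192\tau^2-72\tau^4<0$ and $F_\tau(0)=2\tau^4(\tau^2-12)$. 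You correctly identify that the parity-plus-upper-bound template from Propositions~\ref{p-2equal} and~\ref{p-2opposite} is not sharp enough here (the bound from Proposition~\ref{prop:def_bnd2} only gives at most three zeros in $(-1/2,0)$, which combined with odd parity would leave one or three as possibilities), so the monotonicity step is indeed the essential new ingredient. One tiny imprecision: $F_\tau(-1/2)$ is not literally a sum of termwise non-positive pieces once you separate $-(4-\tau^2)^3$, but grouping $-(4+\tau^2)^3-(4-\tau^2)^3=-8(16+3\tau^4)$ makes the claim exact, and the computed value is negative in any case. The trade-off between the two approaches: the paper's explicit solutions give more information (the actual eigenvalues of $J_3-1/2$), while your argument is shorter, more robust, and more clearly exposes why $2\sqrt{3}$ is the transition point via the clean factorization $F_\tau(0)=2\tau^4(\tau^2-12)$.
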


\begin{proof}
Analysis of equation \eqref{e-threeequal} is surprisingly complicated, so for the sake of brevity we include only the solutions and relevant conclusions. The solutions of equation \eqref{e-threeequal}
with respect to $\la$ are given for all $k\in\ZZ$ by
\begin{align}\label{e--4/3}
\la=
\begin{cases}
    3k+1/2  \\
    3k+3/2  \\
    3k-3\pi^{-1}\arctan\left(\frac{2+\sqrt{5}}{\sqrt{3}}\right)  \\
    3k+3\pi^{-1}\arctan\left(\frac{\sqrt{5}-2}{\sqrt{3}}\right) 
    \end{cases}
    \text{ for }\tau=\pm \frac{2}{\sqrt{3}},
\end{align}
and
\begin{align}\label{e-othersolns}
\la=
\begin{cases}
    3k+3\pi^{-1}\arctan\left(
    \frac{6-\sqrt{3}\tau}{3\tau+2\sqrt{3}}\right)  \\
    3k+3\pi^{-1}\arctan\left(\frac{6+\sqrt{3}\tau}{2\sqrt{3}-3\tau}\right)  \\
    3k-3\pi^{-1}\arctan\left(\frac{12+3\tau^2+\sqrt{3}\sqrt{48+40\tau^2+3\tau^4}}{8\sqrt{3}}\right) \\
    3k+3\pi^{-1}\arctan\left(\frac{-12-3\tau^2+\sqrt{3}\sqrt{48+40\tau^2+3\tau^4}}{8\sqrt{3}}\right)
    \end{cases}
    \text{ for }\tau\neq\pm 2,\pm \frac{2}{\sqrt{3}}.
\end{align}

The solutions in equation \eqref{e--4/3} do not allow for $\la\in(-1,0)$ for any choice of $k\in\ZZ$. Equation \eqref{e-othersolns} can  produce solutions in the interval $(-1,0)$ only for $k=0$. The first two solutions in equation \eqref{e-othersolns} with $k=0$ yield one solution in the interval $(-1,-1/2)$ and another solution in the interval $(-1/2,0)$, respectively, only when $\tau<-2\sqrt{3}$ or $\tau>2\sqrt{3}$. For $-2\sqrt{3}<\tau <2\sqrt{3}$, none of these solutions yields a root in the interval $(-1,0)$. The third and fourth solutions of \eqref{e-othersolns} do not fall in the interval $(-1,0)$ for any values of $\tau$. 
\end{proof}

It remains to check the multiplicity of the eigenvalues of $J_3$ within the interval $(-1/2,1/2)$, as this may increase the deficiency indices of $\fD_3$. We abbreviate $m_1=m_2=m_3=m$ and $p_1=p_2=p_3=p$ to simplify notation in the proof.

\begin{prop}\label{p-threemult}
	Let the above assumptions hold and $\tau\ne\pm 2$.
	The eigenvalues of $J_3$ within the interval $(-1/2,1/2)$ are simple.
\end{prop}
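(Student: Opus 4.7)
The plan is to reduce simplicity of each eigenvalue in $(-1/2,1/2)$ to the non-vanishing of an explicit $2\times 2$ matrix and then rule that out by direct algebraic elimination. When $|\tau|\le 2\sqrt{3}$ the statement is vacuous by Proposition~\ref{p-3equal}, so the real work is in the regime $|\tau|>2\sqrt{3}$, where that proposition provides exactly one eigenvalue of $J_3-1/2$ in each of $(-1,-1/2)$ and $(-1/2,0)$.

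First I would rewrite the system \eqref{e-3matrixequations} as a propagation around the star-graph: from the boundary matrices one reads off $\vec c_1=A_2(\la)\vec c_2$, $\vec c_2=A_3(\la)\vec c_3$ and, separately, $\vec c_1=A_1(\la)\vec c_3$, where each $A_j(\la)$ is the $2\times 2$ matrix from \eqref{e-betterbcsN} with the appropriate phase. Consistency forces $K(\la)\vec c_3=0$ with
\[
K(\la):=A_2(\la)A_3(\la)-A_1(\la),
\]
and since $\vec c_3$ determines the whole eigenfunction uniquely, one has $\dim\cF_{\widetilde\la}^3=2-\operatorname{rank} K(\la)$. Thus $\la$ produces an eigenvalue of $J_3-1/2$ precisely when $\det K(\la)=0$, and a double eigenvalue requires $K(\la)$ to be the zero matrix.

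Next, I would carry out the product explicitly in the symmetric case $m_j=m$, $p_j=p$, $\tau_j=\tau$, $\w=\pi/3$, $\w_2=\pi$. Introducing $a:=e^{i\pi(2\la+1)/3}$, so that $a^3=e^{i\pi(2\la+1)}=-e^{i2\pi\la}$, a direct multiplication yields
\[
p^2K(\la)=\begin{pmatrix}m^2+\tau^2 a^2+mp\,a^3 & \tau a^{-3}(ma^2-pa+m) \\[2pt] \tau a\,(ma^2-pa+m) & m^2+\tau^2 a^{-2}+mp\,a^{-3}\end{pmatrix}.
\]
Because $|a|=1$ for real $\la$, the $(2,1)$ entry is the complex conjugate of the $(1,2)$ entry and the $(2,2)$ entry is the conjugate of the $(1,1)$; hence $K(\la)=0$ is equivalent to the two scalar conditions
\[
ma^2-pa+m=0\qquad\text{and}\qquad m^2+\tau^2a^2+mp\,a^3=0.
\]

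The final step is a short elimination. The first equation gives $ma^2=pa-m$ and hence $mp\,a^3=p^3a/m-p^2-mpa$; substituting into the second and using the identity $p^2-m^2=\tau^2$ (which holds because $p=1+\tau^2/4$, $m=1-\tau^2/4$) collapses the second condition to $(2\tau^2/m)(pa-m)=0$, i.e.\ $a=m/p$. But $m/p$ is real with $m+p=2$ and $m-p=-\tau^2/2\ne 0$, so $a=m/p\in\RR\setminus\{\pm1\}$, contradicting $|a|=1$. Hence $K(\la)\neq 0$ at every eigenvalue in $(-1/2,1/2)$ and the multiplicity is one. The main obstacle is arranging the algebra so that the cancellation is visible; recognizing $p^2-m^2=\tau^2$ and applying it after eliminating $a^2$ and $a^3$ is what produces the sharp contradiction.
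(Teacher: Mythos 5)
Your proof is correct, and it takes a genuinely different and somewhat cleaner route than the paper's. The paper also reduces the question to the vanishing of a rank-deficient $2\times 2$ condition (its four functions $f,g,\widetilde f,\widetilde g$ are essentially the entries of your $K(\la)$ up to normalization), but then it equates $\widetilde f = g$, extracts a transcendental identity in sines, and rules it out by a sign argument that is \emph{localized} to $\la\in(-1/2,0)$ with $|\tau|\ge 2\sqrt3$, finishing by invoking the spectral symmetry about $\la=-1/2$ and the fact that $-1/2$ is not an eigenvalue. You instead observe the conjugate-pair structure of $K(\la)$ on $|a|=1$ and run a purely algebraic elimination driven by the identity $p^2-m^2=\tau^2$, which collapses the two scalar conditions to $a=m/p\in\RR\setminus\{\pm1\}$, incompatible with $|a|=1$. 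Two advantages of your route: it avoids case checking tied to the interval and the magnitude of $\tau$, and it actually proves the stronger statement that in this symmetric three-edge configuration \emph{all} eigenvalues of $J_3$ are simple, not just those in $(-1/2,1/2)$. Two small points worth making explicit if this were to replace the paper's proof: the identification $\dim\cF_{\widetilde\la}^3 = 2-\operatorname{rank}K(\la)$ relies on the fact that a choice of $\vec c_3$ propagates uniquely to $\vec c_2$ and $\vec c_1$, so the kernel of $K(\la)$ genuinely parametrizes the eigenspace; and the divisions by $m$ in the elimination are harmless since $m=0$ forces $\tau=\pm2$, which is excluded, while $\tau=0$ makes the problem trivial (no eigenvalues in the relevant range).
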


\begin{proof}
	The proof is analogous to that of Proposition \ref{c-simple}, with the exception that we now rely on the conditions stated in equation \eqref{e-3matrixequations}. Substitute the last two equations into the third and fourth, respectively. Eliminate the constants $c_{1,1}$ and $c_{1,2}$ by setting this new pair of equations equal to the first two equations in equation \eqref{e-3matrixequations}. This results in two conditions, which can be written as $c_{3,1}f(\tau,\la)=-c_{3,2}g(\tau,\la)$ and $c_{3,1}\widetilde{f}(\tau,\la)=-c_{3,2}\widetilde{g}(\tau,\la)$, where
	\begin{equation}\label{e-threemultcond}
	\begin{aligned}
	f(\tau,\la)&=\frac{m^2}{p^2}+\frac{\tau^2}{p^2}e^{i(2\la+1)(2\pi/3)}-\frac{m}{p}e^{2i\pi\la}, \\
	g(\tau,\la)&=\frac{m\tau}{p^2}\left(e^{-i\pi(2\la+1)}+e^{-i(2\la+1)(\pi/3)}\right)+\frac{\tau}{p}e^{i(-4\pi\la/3+\pi/3)}, \\
	\widetilde{f}(\tau,\la)&=\frac{m\tau}{p^2}\left(e^{i\pi(2\la+1)}+e^{i(2\la+1)(\pi/3)}\right)+\frac{\tau}{p}e^{-i(-4\pi\la/3+\pi/3)}, \\
	\widetilde{g}(\tau,\la)&=\frac{m^2}{p^2}+\frac{\tau^2}{p^2}e^{-i(2\la+1)(2\pi/3)}-\frac{m}{p}e^{-2i\pi\la}.
	\end{aligned}
	\end{equation}
	In order for an eigenvalue to have multiplicity two, all four of the functions in equation \eqref{e-threemultcond} must be equal to $0$, thereby giving eigenfunctions two degrees of freedom. In particular, this means that we can set $\widetilde{f}(\tau,\la)=g(\tau,\la)$ so that exponentials can be combined. Simplification yields that a necessary condition for an eigenvalue to have multiplicity two is
	\begin{align}\label{e-3mult2}
	\frac{p}{m}\sin\left(-\frac{4\pi}{3}\la+\frac{\pi}{3}\right)=\sin\left[\frac{\pi}{3}(2\la+1)\right]+\sin[\pi(2\la+1)].
	\end{align}
	For $\la\in(-1/2,0)$ and $|\tau|\geq2\sqrt{3}$, the left-hand side of equation \eqref{e-3mult2} is negative while the right-hand side is a sum of two positive numbers. As eigenvalues of $J_3-1/2$ with multiplicities taken into account are symmetric about $\la=-1/2$, and $\la=-1/2$ is not an eigenvalue by Proposition \ref{p-3equal}, we conclude that the spectrum of $J_3-1/2$ is simple within the interval $(-1,0)$. 
\end{proof}
Multiplicity of eigenvalues of $J_3$ within the interval $(-1/2,1/2)$ thus accounted for, it is possible to state the deficiency indices of $\fD_3$ for the given setup.

\begin{cor}\label{c-3defindices}
Let $\w=\pi/3$, $\w_2=\pi$ and $\tau_1=\tau_2=\tau_3 = \tau\neq\pm2$. If $\tau<-2\sqrt{3}$ or $\tau>2\sqrt{3}$, then the deficiency indices of $\fD_3$ are $n_+(\fD_3) = n_-(\fD_3)=(1,1)$. If $\tau\in[-2\sqrt{3},2\sqrt{3}]$, then the
operator $\fD_3$ is self-adjoint.
\end{cor}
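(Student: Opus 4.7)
The plan is to simply assemble the ingredients already developed in the subsection: the deficiency index count from Proposition~\ref{prop:def_indices_bl2}, the eigenvalue count from Proposition~\ref{p-3equal}, and the simplicity statement from Proposition~\ref{p-threemult}. No new estimates are required.

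First, Proposition~\ref{prop:def_indices_bl2} reduces the computation of $n_\pm(\fD_3)$ to counting the eigenvalues of the spin-orbit operator $J_3$ in the open interval $(-1/2,1/2)$, with multiplicities. Under the shift $\la = \widetilde\la - 1/2$, this is the same as counting solutions of the determinantal condition~\eqref{e-3eigenvalues} that lie in $\la\in(-1,0)$, which, after substituting $\tau_1=\tau_2=\tau_3=\tau$, $\w=\pi/3$, $\w_2=\pi$, collapses to~\eqref{e-threeequal}.

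Second, I apply Proposition~\ref{p-3equal}. If $\tau\in[-2\sqrt{3},2\sqrt{3}]$ (excluding $\pm 2$ by standing assumption), the proposition says \eqref{e-threeequal} has no roots in $(-1,0)$ at all; in particular none at $-1/2$. So $\sigma(J_3)\cap(-1/2,1/2)=\varnothing$ and Proposition~\ref{prop:def_indices_bl2} gives $n_+(\fD_3)=n_-(\fD_3)=0$, i.e.\ $\fD_3$ is self-adjoint. If $|\tau|>2\sqrt{3}$, Proposition~\ref{p-3equal} gives exactly one root $\la_1\in(-1,-1/2)$ and one root $\la_2\in(-1/2,0)$, while $\la=-1/2$ is not a root; translating back, this is one eigenvalue of $J_3$ in $(-1/2,0)$ and one in $(0,1/2)$ (consistent with the symmetry of $\sigma(J_3)$ about the origin established via Proposition~\ref{p-sbijection}), and $0\notin\sigma(J_3)$.

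Third, Proposition~\ref{p-threemult} asserts that every eigenvalue of $J_3$ inside $(-1/2,1/2)$ is simple for this symmetric three-edge configuration. Combining these two simple eigenvalues gives a total count of $2$ in $(-1/2,1/2)$, and Proposition~\ref{prop:def_indices_bl2} yields $n_+(\fD_3)=n_-(\fD_3)=1$, completing the proof. The only ``step'' with any content is the invocation of the two previous propositions, whose nontrivial work (the root count in~\eqref{e-othersolns} and the sign argument in~\eqref{e-3mult2}) has already been done; there is no real obstacle beyond bookkeeping the shift $\la\leftrightarrow\widetilde\la$.
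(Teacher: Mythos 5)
Your proposal is correct and follows exactly the same route as the paper's proof, which simply states that the result follows by combining Propositions~\ref{prop:def_indices_bl2}, \ref{p-3equal}, and~\ref{p-threemult}. You have merely spelled out the bookkeeping (the shift $\la\leftrightarrow\widetilde\la$ and the translation of the two root locations into one eigenvalue in $(-1/2,0)$ and one in $(0,1/2)$) that the paper leaves implicit.
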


\begin{proof}
The result follows by combining Propositions~\ref{prop:def_indices_bl2}, \ref{p-3equal}, and~\ref{p-threemult}.
\end{proof}

%%%%%%%%%%%%%%%%%%%%%%%%%%%%%
%%%%%%%%%%%%%%%%%%%%%%%%%%%%%
\subsection{Alternative spectral condition}\label{ss-altspec}
%%%%%%%%%%%%%%%%%%%%%%%%%%%%%
%%%%%%%%%%%%%%%%%%%%%%%%%%%%% 

As shown in Proposition~\ref{prop:def_indices_bl2}, computing the deficiency indices of the Dirac operator $\fD_N$ corresponding to the star-graph with $N$ leads boils down to determining how many eigenvalues of the operator $J_N$, counting multiplicities, are in the interval $(-1/2,1/2)$. Instead of determining a system of equations that yields conditions on generic eigenfunctions of $J_N$ and finding when this system has a non-trivial solution, as elsewhere in the manuscript, we can further analyze the momentum operator $J_N -1/2$
on the graph discussed in Proposition~\ref{p-sathree}. Recall that the respective graph has $2N$ directed edges and $N$ vertices (given by $\w_{j-1}$ for $j=1,\dots,N$) and $J_N -1/2$ operates as the differential expression $-i\frac{d}{dx}$ on each edge. Each vertex has two incoming and two outgoing edges, one to each of the neighboring vertices. See the proof of Proposition~\ref{p-sathree} for more details on this setup.

The key object from this construction is the unitary matrix $\cU_N$ which realizes
the boundary condition $\cU_N\psi\ti{in}=\psi\ti{out}$ and whose entries are given explicitly in equation \eqref{e-Uentries}. In the special case where the angles $\w_j$ are chosen so that $\RR^2$ is split into equal sectors, the spectral analysis of $J_N-1/2$ can be reduced to finding the eigenvalues of $\cU_N$. In this symmetric case, the operator $J_N - 1/2$ can be identified with the following operator acting in the Hilbert space $\bigoplus_{j=1}^{2N} L^2(0,2\pi/N)$ via
\begin{equation}\label{eq:momentum_J}
	\left
	\{\psi=\oplus_{j=1}^{2N}\psi_j\in \bigoplus_{j=1}^{2N}H^1(0,2\pi/N)\colon 
	\cU_N\psi(2\pi/N) = \psi(0)\right\}\mapsto \bigoplus_{j=1}^{2N}(-i\psi_j').
\end{equation}

This interpretation is inspired by \cite{E}, and the proof of the following Proposition is a slight generalization of the first step of the proof \cite[Theorem 5.4]{E}, as it takes multiplicities into account.

\begin{prop}\label{p-altspec}
Let $\w_j-\w_{j-1}=2\pi/N$ for all $j=1,\dots,N$. Then $\la$ is an eigenvalue of $J_N-1/2$ if and only if $e^{-2i\pi\la/N}$ is an eigenvalue of $\cU_N$
and the multiplicities of these eigenvalues coincide.
\end{prop}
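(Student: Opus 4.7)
The plan is to solve the eigenvalue problem directly for the momentum-type operator appearing in \eqref{eq:momentum_J}, which, thanks to the symmetry assumption $\w_j-\w_{j-1}=2\pi/N$, is unitarily equivalent to $J_N-1/2$. The equal-spacing assumption is essential here: it guarantees that all $2N$ directed edges of the underlying graph (constructed in the proof of Proposition~\ref{p-sathree}) have the same length $2\pi/N$, so that the joint boundary condition can be written compactly as $\cU_N\psi(2\pi/N)=\psi(0)$ on $\bigoplus_{j=1}^{2N}L^2(0,2\pi/N)$.

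First I would parametrise the eigenfunctions. The equation $-i\psi_j'=\la\psi_j$ on each interval $(0,2\pi/N)$ is solved by $\psi_j(x)=c_j e^{i\la x}$ for some $c_j\in\CC$. Writing $c:=(c_1,\dots,c_{2N})^\top$, we obtain $\psi(0)=c$ and $\psi(2\pi/N)=e^{2i\pi\la/N}c$.

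Second, I would substitute these values into the boundary condition. The condition $\cU_N\psi(2\pi/N)=\psi(0)$ becomes
\[
    e^{2i\pi\la/N}\cU_N c = c, \qquad\text{equivalently}\qquad \cU_N c = e^{-2i\pi\la/N} c.
\]
Hence the linear map $c\mapsto \psi$ defined by $\psi_j(x):=c_j e^{i\la x}$ sends $\ker(\cU_N-e^{-2i\pi\la/N}I)$ into $\ker(J_N-1/2-\la)$. It is obviously injective (since $c=\psi(0)$), and it is surjective because any solution of $-i\psi_j'=\la\psi_j$ on an interval is necessarily of the exponential form above. Consequently the map is a bijection, and the dimensions of the two kernels coincide, which yields the advertised equality of multiplicities.

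There is no real obstacle beyond checking that the identification with the operator in \eqref{eq:momentum_J} is valid; this is built into the construction of $\cU_N$ via the outgoing/incoming boundary values in Proposition~\ref{p-sathree}, and is precisely why the equal-opening-angle hypothesis is imposed. The remainder is a one-line eigenvalue calculation of the type used in \cite[Theorem 5.4]{E}, with the multiplicity statement being an immediate consequence of the bijectivity of the parametrisation $c\leftrightarrow\psi$.
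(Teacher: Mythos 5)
Your proof is correct and takes essentially the same approach as the paper: both construct the map $\mathbf{c}\mapsto\psi$, $\psi_j(x)=c_je^{i\lambda x}$, verify that the boundary condition $\cU_N\psi(2\pi/N)=\psi(0)$ translates to $\cU_N\mathbf{c}=e^{-2i\pi\lambda/N}\mathbf{c}$, and conclude the multiplicity statement from bijectivity of this parametrisation. Your surjectivity argument (every eigenfunction of $-i\,d/dx$ on an interval is exponential) is cleanly stated, and in fact avoids a small notational slip in the paper's surjectivity check.
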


\begin{proof}
We claim that the mapping
\[
	\cR\colon\CC^{2N} \rightarrow 
	\bigoplus_{n=1}^{2N} L^2(0,2\pi/N),\qquad 
	\cR{\bf c} = \bigoplus_{j=1}^{2N} c_je^{i\la\te},\qquad {\bf c}= (c_1,c_2,\dots, c_{2N})^\top.
\]
is a bijection between $\ker(\cU_N - e^{-2i\pi\la/N})$ and $\ker(J_N - 1/2-\la)$, where the operator $J_N-1/2$ is interpreted as in~\eqref{eq:momentum_J}.

Let ${\bf c}\in\ker(\cU_N - e^{-2i\pi\la/N})$. Then, $-i(\cR{\bf c})' = \la\cR{\bf c}$ and the boundary condition 
\[
\cU_N((\cR{\bf c})(2\pi/N)) = e^{i2\pi\la/N}\cU_N((\cR{\bf c})(0))= \cR{\bf c}(0)
\]
holds. Hence, we conclude that $\cR{\bf c}\in\ker(J_N - 1/2-\la)$. Since $\ker\cR = \{0\}$, the mapping $\cR$ is injective and it only remains to check that $\cR$ maps $\ker(\cU_N - e^{-2i\pi\la/N})$ onto $\ker(J_N -1/2-\la)$. Let $\psi \in \ker(J_N - 1/2 - \la)$ be arbitrary. Then, $\psi(0) = e^{-i2\pi\la/N}\psi(2\pi\la/N)$ and the boundary condition $\cU_N\psi(0) = \psi(2\pi\la/N)$ imply that $\psi(0)\in \ker(\cU_N - e^{-2i\pi\la/N})$. Finally, this means that $\cR(\psi(0)) = \psi$ and $\cR$ is surjective.
\end{proof}

Proposition \ref{p-altspec} says that when the star-graph is symmetric instead of searching for those $\la\in(-1,0)$ that are eigenvalues of $J_N-1/2$, we can search for eigenvalues of $\cU_N$ that fall on the arc $\cA_N := \{z\in\TT\colon 0 < {\rm arg}\,(z) < 2\pi/N\}$ of the unit circle. The deficiency indices of $\fD_N$ are then equal to half of the number of eigenvalues of the matrix $\cU_N$ on the arc $\cA_N$ with multiplicities taken into account.
In practice, the spectrum of $\cU_N$ is much easier to determine than by using the general methods of Section~\ref{s-star} and allows for more sophisticated examples to be analyzed.

%%%%%%%%%%%%%%%%%%%%%%%%%%%%%
%%%%%%%%%%%%%%%%%%%%%%%%%%%%%
\subsection{Further examples and comments}\label{ss-22}
%%%%%%%%%%%%%%%%%%%%%%%%%%%%%
%%%%%%%%%%%%%%%%%%%%%%%%%%%%% 

Thanks to the alternative spectral condition given in Proposition \ref{p-altspec} and the explicit entries of $\cU_N$ given in equation \eqref{e-Uentries}, it is possible to compute the eigenvalues of $J_N-1/2$ for higher values of $N$ in the special case when $\w_j-\w_{j-1}=2\pi/N$ for all $j=1,\dots,N$ and the other parameters are given. We present several such cases that are of interest. Although the results are primarily numerical, they can be easily verified.

Let $N=6$. Following Proposition ~\ref{p-altspec}, the edges of the star-graph are chosen to divide $\RR^2$ into $6$ equal sectors, each with opening angle $\pi/3$. If, additionally, the interaction parameters are chosen to be 
\begin{align}\label{e-6doubles}
\tau_1=\tau_3=\tau_4=\tau_6=1 \text{ and } \tau_2=\tau_5=-1
\end{align}
then we find that there are two double eigenvalues of the matrix $\cU_6$ lying on the arc $\cA_6$. These eigenvalues are approximately given by $0.976136+0.217162i$ and $0.676136+0.736777i$, see Figure \ref{f-6double}. Hence, for this configuration $\fD_6$ has deficiency indices $(2,2)$. Surprisingly, with these interaction parameters, all eigenvalues of $\cU_6$ are double. 

\begin{figure}[ht]\label{f-6double}
\includegraphics[scale=.4]{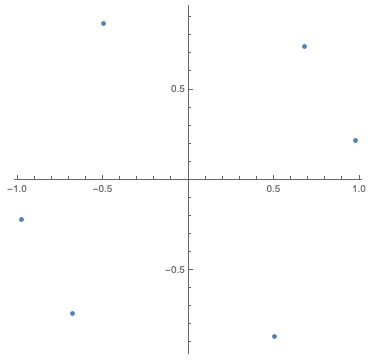}
   \caption{The eigenvalues of $\cU_6$ with parameters given by equation \eqref{e-6doubles}.}
   \centering
\end{figure}

It can be inferred that alternating the interaction strengths on each edge between positive and negative values may be the crucial element which gives rise to these multiple eigenvalues and increased deficiency indices. After all, the explicit analyses for $N=2,3$ did not cover such a property and the conclusions were that eigenvalues were simple and deficiency indices were $(1,1)$ or lower. However, configurations similar to the above example, where interaction strengths alternated, did not lead to similar results when $N=4$ or $N=5$. Indeed, the case $N=6$ was the lowest value for which we were able to observe $4$ eigenvalues, counting multiplicities, lying on the arc $\cA_N$, corresponding to $\fD_N$ having deficiency indices $(2,2)$. Of course, it may be possible to have deficiency indices higher than $(1,1)$ for lower values of $N$, but tests to find such eigenvalues were unsuccessful. 

It is also possible to observe multiple transitions for the deficiency indices by changing only two interaction strengths. For example, let $\tau_2=\tau_4=\tau_5=\tau_6=1$ and denote $t:=\tau_1=\tau_3$. It is possible to conjecture after many numerical tests that for large and small values of $t$, examples where $t<-24$ or $t>-1/6$, that $\fD_6$ is self-adjoint. Furthermore, examples where $t\in(-24,-\sqrt{21})\cup(-7/8,-1/6)$ found deficiency indices $(1,1)$ and $t\in(-\sqrt{21},-7/8)$ found deficiency indices $(2,2)$. Of course, the transition points are only rough estimates, but nevertheless the range of possibilities from varying $t$ is interesting.

%%%%%%%%%%%%%%%%%%%%%%%%%%%%%
%%%%%%%%%%%%%%%%%%%%%%%%%%%%%
\section{Characterization of self-adjoint extensions}\label{s-distinguished}
%%%%%%%%%%%%%%%%%%%%%%%%%%%%%
%%%%%%%%%%%%%%%%%%%%%%%%%%%%% 

The Dirac operator $\fD_N$ presented in Section \ref{s-star} has equal deficiency indices by Proposition~\ref{prop:def_indices_bl2}, so it possesses a family of self-adjoint extensions. These extensions can be parametrized with the help of the orthogonal decomposition from Theorem~\ref{p-decomp2} and classical von Neumann extension theory; see e.g.~\cite[\S 13.2]{S12} and~\cite[\S 6.1]{EE}. In applications, it is important to fix a specific self-adjoint extension to work with and there are many ways of doing this depending on the context. One common way to choose such an extension from the parametrized family is to find the most regular operator domain, in terms of the scale of Sobolev spaces. In the literature, such an extension is usually referred to as the `distinguished' self-adjoint extension. Under the assumption $0\notin\sigma(J_N)$, it is possible for us to identify a unique distinguished self-adjoint extension by requiring that its operator domain is contained in the Sobolev space $\bigoplus_{j=1}^N H^{1/2}(\Omega_j,\CC^2)$. 

The first step to achieving a parametrization of all self-adjoint extensions is to explicitly find defect elements for the operator $\fD_N$. Theorem~\ref{p-decomp2} shows that deficiency indices of $\fD_N$ come solely from the operators $\bd_{\wt\la}$ in its orthogonal decomposition related to the eigenvalues $\wt\la$ of $J_N$ lying in the interval $(-1/2,1/2)$.  All other operators in the orthogonal decomposition of $\fD_N$ are self-adjoint. 

The next lemma characterizes the defect subspaces of $\bd_{\wt\la}$ for $\wt\la \in [0,1/2)$. This analysis is partially carried out in \cite[Equation (1.25)]{CP} and \cite[Proof of Lemma 2.5]{TOB} but we provide some details here for the convenience of the reader.

\begin{lem}\label{p-decompdefelements}
Let the symmetric operator $\bd_{\wt\la}$ for $\wt\la \in [0,1/2)$,	acting in the Hilbert space $L^2(\RR_+,\CC^2)$ be as in equation \eqref{eq:1dDirac}.
Then the defect subspaces are characterized by $\Ker(\bd^*_{\wt\la}\pm i) = {\rm span}\,\{f_{\wt\la}^\pm\}$ where
	\begin{align*}
	    f_{\wt\la}^+ :=
	    \begin{pmatrix} 
	    r^{1/2}K\ci{\widetilde{\la}-1/2}(r) \\ 
	    -ir^{1/2}K\ci{\widetilde{\la}+1/2}(r) 
	    \end{pmatrix}
	    ,\hspace{2em}
	    f_{\wt\la}^-:=
	    \begin{pmatrix} 
	    r^{1/2}K\ci{\widetilde{\la}-1/2}(r) \\ 
	    ir^{1/2}K\ci{\widetilde{\la} + 1/2}(r) 
	    \end{pmatrix},
	\end{align*}
and $K_\nu(\cdot)$ is the modified Bessel function of the second kind and order $\nu\in\RR$. 
\end{lem}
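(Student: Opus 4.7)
The plan is to solve the ODE system $\bd^*_{\wt\la} f = \pm i f$ for $f = (f_1,f_2)^\top \in L^2(\RR_+,\CC^2)$, reduce it to a modified Bessel equation, and then pick out the unique $L^2$-solution. Since $\bd_{\wt\la}$ acts via the formal differential expression displayed in equation~\eqref{eq:1dDirac}, the adjoint acts via the same expression on the maximal domain, so the defect element condition $\bd^*_{\wt\la} f = \pm i f$ reads componentwise as
\[
-f_2' - \tfrac{\wt\la}{r}f_2 = \pm i f_1,\qquad f_1' - \tfrac{\wt\la}{r}f_1 = \pm i f_2.
\]
I will focus on the $+i$ case, with the $-i$ case being analogous.

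First, I substitute $f_j(r) = r^{1/2} g_j(r)$ for $j=1,2$ and set $\nu := \wt\la - 1/2$. A direct computation reduces the system to
\[
g_1' - \tfrac{\nu}{r} g_1 = i g_2,\qquad g_2' + \tfrac{\nu+1}{r} g_2 = -i g_1.
\]
Differentiating the first equation and substituting the second, the function $g_1$ satisfies
\[
g_1'' + \tfrac{1}{r} g_1' - \left(1 + \tfrac{\nu^2}{r^2}\right) g_1 = 0,
\]
which is the modified Bessel equation of order $\nu$, with general solution $g_1 = \alpha I_\nu(r) + \beta K_\nu(r)$ for constants $\alpha,\beta \in \CC$. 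Since $I_\nu(r)$ grows exponentially as $r\to\infty$, the factor $r^{1/2} I_\nu(r)$ cannot lie in $L^2(\RR_+)$, so I set $\alpha = 0$ and obtain $g_1(r) = K_{\wt\la - 1/2}(r)$ up to normalization.

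Next, I recover $f_2$ from $f_1$ using the Bessel recurrence $K_\nu'(r) - \frac{\nu}{r} K_\nu(r) = -K_{\nu+1}(r)$. A brief calculation gives
\[
f_1' - \tfrac{\wt\la}{r} f_1 = r^{1/2}\left(K_\nu'(r) - \tfrac{\nu}{r}K_\nu(r)\right) = -r^{1/2} K_{\wt\la + 1/2}(r),
\]
so the second component is (up to sign convention) $f_2 = \mp i\, r^{1/2} K_{\wt\la + 1/2}(r)$, matching the claimed form of $f_{\wt\la}^\pm$ up to overall normalization. Uniqueness of the defect subspace modulo scaling follows from the fact that the dimension of $\ker(\bd_{\wt\la}^*\mp i)$ is at most one after discarding $I_\nu$, so $\Ker(\bd_{\wt\la}^* \pm i) = \spa\{f_{\wt\la}^\pm\}$.

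It remains to verify $f_{\wt\la}^\pm \in L^2(\RR_+,\CC^2)$. The behavior as $r\to\infty$ is immediate from the decay $K_\nu(r) \sim \sqrt{\pi/(2r)}\, e^{-r}$. The main point to be careful about is the behavior near $r = 0$: for $|\nu| > 0$ one has $K_\nu(r) \sim \frac{1}{2}\Gamma(|\nu|)(r/2)^{-|\nu|}$, so $r^{1/2} K_\nu(r) \in L^2$ near $0$ precisely when $|\nu| < 1/2$; for $\nu = 0$ the logarithmic singularity $K_0(r) \sim -\ln r$ is also square integrable against the weight $r\,dr$ near $0$. Here $\nu = \wt\la - 1/2 \in [-1/2, 0]$ and $\wt\la + 1/2 \in [1/2, 1)$, both of which have $|\nu| \le 1/2$, with the case $\wt\la + 1/2 = 1/2$ handled by the explicit formula $K_{1/2}(r) = \sqrt{\pi/(2r)}\, e^{-r}$ giving $r^{1/2} K_{1/2}(r) = \sqrt{\pi/2}\, e^{-r}$ which is bounded near $0$. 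The subtlest step is this integrability check: the hypothesis $\wt\la \in [0,1/2)$ is exactly what keeps both orders of Bessel function within the admissible range.
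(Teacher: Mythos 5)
Your proof is correct and follows essentially the same approach as the paper: both reduce the first-order coupled system $\bd_{\wt\la}^* f = \mp i f$ to a second-order Bessel-type ODE for the first component, discard the growing solution $I_\nu$ to isolate the unique $L^2$ branch $K_{\wt\la-1/2}$, and recover the second component via the recurrence. The only substantive difference is cosmetic: you pull out the $r^{1/2}$ factor up front (working with $g_j = r^{-1/2}f_j$, giving the textbook modified Bessel equation), while the paper keeps $f_1$ and lands on the equivalent equation $f_1'' - (1+(\wt\la^2-\wt\la)/r^2)f_1 = 0$ directly; you also spell out the $L^2$ integrability check near $0$ more explicitly, which the paper leaves implicit. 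One small correction: the standard convention is $\Ker(\bd_{\wt\la}^* \pm i) = \{f : \bd_{\wt\la}^* f = \mp i f\}$, so you should solve $\bd_{\wt\la}^* f = \mp i f$ rather than $\pm i f$ as written; your hedge ``(up to sign convention)'' covers it, but the signs then come out without ambiguity to $f_2 = \mp i\,r^{1/2}K_{\wt\la+1/2}(r)$ as in the claimed $f^\pm_{\wt\la}$.
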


\begin{proof}
We will first characterize $\ker(\bd_{\wt\la}^*+i)$. Simplification gives
\begin{equation}
\label{e-posdefect}
\begin{pmatrix}
    f_1 \\
    f_2
    \end{pmatrix}
    \in\Ker(\bd^*_{\wt\la} +i)\quad\iff\quad
    \begin{cases}
    if_1+\left(-\dfrac{d}{dr}-\dfrac{\widetilde{\la}}{r}\right)f_2=0, \\
    \left(\dfrac{d}{dr}-\dfrac{\widetilde{\la}}{r}\right)f_1+if_2=0.
    \end{cases}
\end{equation}
Solving the second equation for $f_2$ and substituting back into the first equation yields the differential equation
\begin{align*}
    f_1''+\left(-1-\dfrac{\widetilde{\la}^2-\widetilde{\la}}{r^2}\right)f_1=0.
\end{align*}
The square-integrable solution is given up to multiplication by a constant by \cite[Equation 10.13.1]{DLMF} and rewritten by \cite[Equation 10.27.8]{DLMF} as
\begin{align*}
    f_1=r^{1/2}K\ci{\widetilde{\la}-1/2}(r).
\end{align*}
Plugging this back into the system of differential equations in~\eqref{e-posdefect} and using \cite[Equation 10.29.2]{DLMF} yields 
\begin{align*}
    f_2=-ir^{1/2}K\ci{\widetilde{\la}+1/2}(r).
\end{align*}
Characterization of $\ker(\bd_{\wt\la}^*-i)$ proceeds similarly, with the same differential equation providing for $f_1$, but with an additional factor of $-1$ present in $f_2$. 
\end{proof}

Lemma~\ref{p-decompdefelements} immediately allows us to describe all possible self-adjoint extensions of the operator $\fD_N$. To this end, let $\cD_N^{\pm}=\ker(\fD_N^*\mp i)$ denote the positive and negative defect spaces of $\fD_N$. Also recall that $\f_{\wt\la}^j$, $1\le j\le n_{\wt\la}$
is the orthonormal basis of $\ker(J_N-\wt\la)$ for $\wt\la \in \sigma(J_N)\cap (0,\infty)$ of multiplicity $n_{\wt\la}\in\{1,2\}$. The orthonormal basis of $\ker J_N$ is $\{\f_0^1,\f_0^2\}$ and is constructed as in Remark~\ref{rem:basis_for_zero2}. 

\begin{theo}\label{t-nbreakdown}
	Let the angles $\{\w_j\}_{j=1}^{N-1}$ and the interaction strengths $\{\tau_j\}_{j=1}^N$ be chosen as in Section~\ref{s-star} so that the operator $\fD_N$ has deficiency indices $n_+(\fD_N)=n_-(\fD_N)=n\ge1$. All self-adjoint extensions of $\fD_N$ are in one-to-one correspondence with unitary $n\times n$ matrices $\cU=\{u_{m,k}\}_{k,m=1}^{n,n}$. 
	If $0\notin\sigma(J_N)$, the self-adjoint extension $\fD_{N}^\cU$ of $\fD_N$ corresponding to the matrix $\cU$ is characterized by
	\[
	\begin{aligned}
	    \dom\fD_N^{\cU} &= 
	    \bigg\{w=u+v_+ + v_-\in L^2(\RR^2,\CC^2)\colon u\in \dom\fD_N,\\
	    &\qquad\quad v_+ = \sum_{\wt\la\in\sigma(J_N)\cap (0,\frac12)}\sum_{j=1}^{n_{\wt\la}} c_{k(\wt\la)+j}\big[K_{\wt\la-1/2}(r)\f_{\wt\la}^j(\te) + K_{\wt\la+1/2}(r)\cS\f_{\wt\la}^j(\te)\big]\\
	    	    &\qquad\quad v_- = \sum_{\wt\la\in\sigma(J_N)\cap (0,\frac12)}\sum_{j=1}^{n_{\wt\la}} (\cU {\bf c})_{k(\wt\la)+j}\big[K_{\wt\la-1/2}(r)\f_{\wt\la}^j(\te) - K_{\wt\la+1/2}(r)\cS\f_{\wt\la}^j(\te)\big]  \bigg\}
	      \\
	      \fD^\cU_Nw &=\fD_N^{\cU}(u+v_+ + v_-) = \fD_N u + iv_+ -i v_-,
	\end{aligned}
	\]
	where ${\bf c} = (c_1,c_2,\dots,c_n)\in\CC^n$ is an arbitrary vector with complex entries
	and $k(\wt\la)$ is the number of eigenvalues of $J_N$ in the interval $(0,\wt\la)$ counted with multiplicities. 
 If $0\in\sigma(J_N)$, the self-adjoint extension $\fD_{N}^\cU$ of $\fD_N$ corresponding to the matrix $\cU$ is characterized by
	\[
	\begin{aligned}
	\dom\fD_N^{\cU} &= 
	\bigg\{w=u+v_+ + v_-\in L^2(\RR^2,\CC^2)\colon u\in \dom\fD_N,\\
	&\qquad v_+ =
	c_1\big[K_{1/2}(r)\f_0^1(\te) + K_{1/2}(r)\cS\f_0^1(\te)\big]\\
	&\qquad\qquad+
	 \sum_{\wt\la\in\sigma(J_N)\cap (0,\frac12)}\sum_{j=1}^{n_{\wt\la}} c_{k(\wt\la)+j+1}\big[K_{\wt\la-1/2}(r)\f_{\wt\la}^j(\te) + K_{\wt\la+1/2}(r)\cS\f_{\wt\la}^j(\te)\big]\\
	&\qquad v_- =
	(\cU {\bf c})_1\big[K_{1/2}(r)\f_0^1(\te) + K_{1/2}(r)\cS\f_0^1(\te)\big]\\
	&\qquad\qquad+
	\sum_{\wt\la\in\sigma(J_N)\cap (0,\frac12)}\sum_{j=1}^{n_{\wt\la}} (\cU {\bf c})_{k(\wt\la)+j+1}\big[K_{\wt\la-1/2}(r)\f_{\wt\la}^j(\te) - K_{\wt\la+1/2}(r)\cS\f_{\wt\la}^j(\te)\big]  \bigg\}
\\
	\fD^\cU_Nw &=\fD_N^{\cU}(u+v_+ + v_-) = \fD_N u + iv_+ -i v_-,
	\end{aligned}
	\]
	where the vector ${\bf c}\in\CC^n$ 
	and the function $k(\wt\la)$ are as before.
\end{theo}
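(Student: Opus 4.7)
The plan is to invoke classical von Neumann extension theory after explicitly identifying the defect subspaces $\cD_N^{\pm} = \ker(\fD_N^* \mp i)$ of $\fD_N$. Two ingredients are central: the orthogonal decomposition from Theorem~\ref{p-decomp2}, which represents $\fD_N$ as an orthogonal sum of half-line Dirac operators $\bd_{\wt\la}$, and Lemma~\ref{p-decompdefelements}, which gives explicit spanning elements of $\ker(\bd_{\wt\la}^* \mp i)$ in terms of Bessel functions for $\wt\la \in [0,1/2)$ (recall $\bd_{\wt\la}$ is self-adjoint for $\wt\la \ge 1/2$). Since unitary equivalence commutes with orthogonal sums and preserves defect subspaces, the spaces $\cD_N^{\pm}$ split as orthogonal sums of one-dimensional subspaces indexed by pairs $(\wt\la,j)$ with $\wt\la \in \sigma(J_N) \cap [0,1/2)$ and $1 \le j \le n_{\wt\la}$, with the convention that only $j=1$ contributes at $\wt\la = 0$ because just a single copy of $\bd_0$ appears in the decomposition. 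The resulting total cardinality $n$ recovers Proposition~\ref{prop:def_indices_bl2}.

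For each such $(\wt\la,j)$, I would then pull back the defect elements $f_{\wt\la}^{\pm}$ of Lemma~\ref{p-decompdefelements} via $(W_{\wt\la}^j)^{-1}$ (or $W_0^{-1}$). From the explicit form of $W_{\wt\la}^j$ in~\eqref{eq:WNj} and the orthonormality of $\{\f_{\wt\la}^j,\cS\f_{\wt\la}^j\}$, one obtains the inverse formula $(W_{\wt\la}^j)^{-1}(\psi_1,\psi_2)^{\top}(r,\te) = r^{-1/2}\psi_1(r)\f_{\wt\la}^j(\te) - ir^{-1/2}\psi_2(r)\cS\f_{\wt\la}^j(\te)$, and a direct substitution yields
\[
(W_{\wt\la}^j)^{-1} f_{\wt\la}^{\mp}(r,\te) = K_{\wt\la-1/2}(r)\f_{\wt\la}^j(\te) \pm K_{\wt\la+1/2}(r)\cS\f_{\wt\la}^j(\te),
\]
with the upper-sign element lying in $\cD_N^+$ and the lower-sign element in $\cD_N^-$. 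This produces orthogonal bases of $\cD_N^+$ and $\cD_N^-$ of cardinality $n$, and the map sending the $+$ basis vector indexed by $(\wt\la,j)$ to the $-$ basis vector indexed by the same pair is norm-preserving. By classical von Neumann theory (see, e.g., \cite[\S 13.2]{S12}), the self-adjoint extensions of $\fD_N$ are in one-to-one correspondence with unitary operators $V\colon \cD_N^+ \to \cD_N^-$, the extension being $\dom \fD_N^V = \dom \fD_N \dotplus \{v_+ + V v_+ \colon v_+ \in \cD_N^+\}$ with action inherited from $\fD_N^*$. Expanding $v_+$ in the $+$ basis with coefficient vector $\mathbf{c} \in \CC^n$, writing $V v_+$ in the $-$ basis with coefficients $\cU \mathbf{c}$, where $\cU$ is the matrix of $V$ between the two bases, and using $\fD_N^* v_{\pm} = \pm i v_{\pm}$ for the action formula $\fD_N^\cU w = \fD_N u + iv_+ - iv_-$, one recovers the stated characterization.

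The principal technical subtlety, which accounts for the case split in the statement, is the handling of $\wt\la = 0$. When $0 \in \sigma(J_N)$, the kernel $\ker J_N$ is two-dimensional with basis $\{\f_0^1,\f_0^2 = \cS\f_0^1\}$ constructed in Remark~\ref{rem:basis_for_zero2}, yet Theorem~\ref{p-decomp2} assigns only one copy of $\bd_0$ to this eigenspace. Consequently the double eigenvalue at zero contributes only one dimension to each of $\cD_N^{\pm}$: only the index $j = 1$ appears at $\wt\la = 0$ in the displayed formula, and the two Bessel orders coincide via the identity $K_{-1/2}(r) = K_{1/2}(r)$, which together explain the appearance of the single summand in this case.
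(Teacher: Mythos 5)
Your proposal is correct and follows essentially the same route as the paper: pull back the explicit defect elements of Lemma~\ref{p-decompdefelements} through the unitaries $(W_{\wt\la}^j)^{-1}$ of Theorem~\ref{p-decomp2} to obtain orthogonal bases of $\cD_N^\pm$, then invoke von Neumann extension theory, with the special handling of a double eigenvalue at $0$ via $K_{-1/2}=K_{1/2}$. Your sign bookkeeping (matching $f_{\wt\la}^-\in\ker(\bd_{\wt\la}^*-i)$ to $\cD_N^+ = \ker(\fD_N^*-i)$) is in fact slightly more careful than the paper's proof text, which has a superficial $\pm$ labeling slip in the displayed pullback formula, though the theorem statement itself is unaffected.
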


\begin{proof}
Assume $0\notin\sigma(J_N)$. The case $0\in\sigma(J_N)$ will make use of the identity  $K_{\nu}(z)=K_{-\nu}(z)$, but can otherwise be treated analogously. The defect elements of $\bd_{\wt\la}$ for $\wt\la\in[0,1/2)$, stated in Lemma \ref{p-decompdefelements}, can be mapped via the inverses of the unitary operators in~\eqref{eq:WNj} that yield the orthogonal decomposition:
	\begin{align*}
	    [(W_{\wt\la}^j)^{-1}f^+_{\wt\la}]
	    (r,\te)&=
	    K\ci{\wt\la-1/2}(r)\f^j_{\wt\la}(\te)+K\ci{\wt\la+1/2}(r)\cS\f^j_{\wt\la}(\te)\in\cD_N^+, \\
	    [(W_{\wt\la}^j)^{-1}f^-_{\wt\la}]
	    (r,\te)&=
	    K\ci{\wt\la-1/2}(r)\f^j_{\wt\la}(\te)-K\ci{\wt\la+1/2}(r)\cS\f^j_{\wt\la}(\te)\in\cD_N^-.
	\end{align*}
	These pre-images form orthogonal bases for $\cD_N^+$ and $\cD_N^-$.
	The matrix $\cU$ defines a unitary mapping
	\[
	\begin{aligned}
		\cD_N^+&\ni   \sum_{\wt\la\in\sigma(J_N)\cap (0,\frac12)}\sum_{j=1}^{n_{\wt\la}} c_{k(\wt\la)+j}\big[K_{\wt\la-1/2}(r)\f_{\wt\la}^j(\te) + K_{\wt\la+1/2}(r)\cS\f_{\wt\la}^j(\te)\big]\\
		&\qquad\qquad \mapsto
		\sum_{\wt\la\in\sigma(J_N)\cap (0,\frac12)}\sum_{j=1}^{n_{\wt\la}} (\cU {\bf c})_{k(\wt\la)+j}\big[K_{\wt\la-1/2}(r)\f_{\wt\la}^j(\te) - K_{\wt\la+1/2}(r)\cS\f_{\wt\la}^j(\te)\big]\in \cD_N^-.
	\end{aligned}
	\]
	 All self-adjoint extensions of $\fD_N$ are thus in one-to-one correspondence with unitary $n\times n$ matrices $\cU$ by the von Neumann decomposition, see e.g.~\cite[Equation (6.1.7)]{EE} and also~\cite[Theorem 13.10]{S12}. This decomposition also yields the characterization of the action and operator domain for $\fD^\cU_N$.
\end{proof}

	\begin{rem}
		As in~\cite[Remark 1.5]{TOB}, the functions $K_\nu(r)$, $\nu\in(0,1)$,
		in the characterization of the operator domain of $\fD_N^\cU$ can be replaced by $r^{-\nu}\chi(r)$ where $\chi\colon\R_+\rightarrow[0,1]$ is a $C^\infty$-smooth function that equals $1$ in a neighbourhood of $0$ and $0$ for $r$ large enough. \hfill$\diamondsuit$ 		 
	\end{rem}
	
The decomposition of general functions in the domain $\fD_N^\cU$ given in Theorem \ref{t-nbreakdown} provides a tool to classify the regularity of functions in the domain. When $0\notin\sigma(J_N)$, it is possible to single out a unique distinguished self-adjoint extension whose operator domain is the most regular in the scale of Sobolev spaces. A similar statement is made for Dirac operators with infinite mass boundary conditions in sectors in \cite[Theorem 1.2(ii)]{TOB}.

\begin{theo}\label{t-distinguished}
	Let the angles $\{\w_j\}_{j=1}^{N-1}$ and the interaction strengths $\{\tau_j\}_{j=1}^N$ be chosen as in Section~\ref{s-star} so that the operator $\fD_N$ has deficiency indices $n_+(\fD_N) = n_-(\fD_N) = n\ge1 $ and $0\notin\sigma(J_N)$. The so called `distinguished' self-adjoint extension of $\fD_N$, the only extension such that 
	\begin{align*}
	    \dom\fD_N^{\cU}\subset\bigoplus_{j=1}^N H^{1/2}(\Omega_j,\CC^2),
	\end{align*}
	is the extension corresponding to $\cU=\cI_n$, the $n\times n$ identity matrix. In fact, the following slightly stronger statement holds
	\begin{align*}
	\dom\fD_N^{\cI_n}\subset\bigoplus_{j=1}^N H^s(\Omega_j,\CC^2),
	\end{align*}
	for all $s<1/2+\min\{\sigma(J_N)\cap(0,\infty)\}$.
\end{theo}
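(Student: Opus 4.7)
The plan is to exploit the explicit characterization of $\dom\fD_N^\cU$ in Theorem~\ref{t-nbreakdown} and analyze the singular behavior of $v_++v_-$ at the origin by means of the standard asymptotics of the modified Bessel function of the second kind: $K_\nu(r)\sim\tfrac{\Gamma(|\nu|)}{2}(r/2)^{-|\nu|}$ as $r\to 0^+$ for $\nu\ne 0$, together with exponential decay at infinity. Under the assumption $0\notin\sigma(J_N)$, only eigenvalues $\wt\la\in\sigma(J_N)\cap(0,1/2)$ contribute to the defect subspaces, and for such $\wt\la$ the factor $K_{\wt\la-1/2}(r)=K_{1/2-\wt\la}(r)$ carries a \emph{mild} singularity $\sim r^{-(1/2-\wt\la)}$, while $K_{\wt\la+1/2}(r)$ carries a \emph{strong} singularity $\sim r^{-(1/2+\wt\la)}$.

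Next, I would invoke the elementary local Sobolev criterion on a sector: a function of the form $\chi(r)r^{-\alpha}g(\te)$, with $\chi$ a smooth compactly supported cutoff equal to $1$ near $0$ and $g\in C^\infty(\overline{\II_k},\CC)$, belongs to $H^s(\Omega_k,\CC)$ if and only if $\alpha+s<1$; this follows from a direct polar-coordinate computation of the Sobolev--Slobodeckij seminorm, the singularity being concentrated at the corner $r=0$ and the angular parts $\f_{\wt\la}^j|_{\II_k}$ being smooth up to the boundary of $\II_k$. Combined with the exponential decay of $K_\nu(r)$ as $r\to\infty$, this reduces the regularity question to bookkeeping of the exponents of the two Bessel factors. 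Assembling $v_++v_-$ from Theorem~\ref{t-nbreakdown}, the coefficient of $K_{\wt\la-1/2}(r)\f_{\wt\la}^j(\te)$ is $c_{k(\wt\la)+j}+(\cU\mathbf{c})_{k(\wt\la)+j}$, whereas the coefficient of the strongly singular term $K_{\wt\la+1/2}(r)\cS\f_{\wt\la}^j(\te)$ is $c_{k(\wt\la)+j}-(\cU\mathbf{c})_{k(\wt\la)+j}$. Since $r^{-(1/2+\wt\la)}$ fails to lie in $H^{1/2}_{\mathrm{loc}}$ for every $\wt\la>0$, and since the regular part $u$ already lies in $\bigoplus_{j=1}^N H^1(\Omega_j,\CC^2)\subset\bigoplus_{j=1}^N H^{1/2}(\Omega_j,\CC^2)$, requiring $\dom\fD_N^\cU\subset\bigoplus_{j=1}^N H^{1/2}(\Omega_j,\CC^2)$ forces every strongly singular coefficient to vanish for every $\mathbf{c}\in\CC^n$; equivalently, $\mathbf{c}=\cU\mathbf{c}$ for all $\mathbf{c}\in\CC^n$, whence $\cU=\cI_n$.

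For the converse direction and the finer regularity statement, I would take $\cU=\cI_n$: then the strongly singular terms cancel identically and $v_++v_-$ reduces to a finite linear combination of contributions of the form $K_{\wt\la-1/2}(r)\f_{\wt\la}^j(\te)$. By the local Sobolev criterion above, each such contribution lies in $H^s(\Omega_k,\CC^2)$ for every $s<1-(1/2-\wt\la)=1/2+\wt\la$ and every sector $\Omega_k$. Taking the most restrictive bound over the finitely many eigenvalues appearing in the sum yields $s<1/2+\min\bigl(\sigma(J_N)\cap(0,1/2)\bigr)$, which coincides with $1/2+\min\bigl(\sigma(J_N)\cap(0,\infty)\bigr)$ since $n\ge 1$ forces at least one positive eigenvalue to lie in $(0,1/2)$ (otherwise $\fD_N$ would already be self-adjoint by Proposition~\ref{prop:def_indices_bl2}).

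The main obstacle is the \emph{linear independence} step needed for the necessity direction: one must verify that a nontrivial linear combination of strongly singular contributions $K_{\wt\la+1/2}(r)\cS\f_{\wt\la}^j(\te)$ cannot cancel, either among themselves or against the $H^{1/2}$-regular term $u$. Distinct eigenvalues produce distinct singular exponents $-(1/2+\wt\la)$ that cannot cancel one another; for a fixed double eigenvalue $\wt\la$, the $L^2(\SS^1,\CC^2)$-linear independence of $\{\cS\f_{\wt\la}^1,\cS\f_{\wt\la}^2\}$, inherited from the bijection property in Proposition~\ref{p-sbijection} applied to the orthonormal basis of $\cF_{\wt\la}^N$, supplies the needed angular independence, and together these two facts close the argument.
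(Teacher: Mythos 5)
Your proof is correct and follows the paper's overall strategy (classify each domain element via Theorem~\ref{t-nbreakdown}, use the Bessel asymptotics to distinguish the mild singularity $r^{-(1/2-\wt\la)}$ from the strong one $r^{-(1/2+\wt\la)}$, and conclude $\cU=\cI_n$), but your technical route for the Sobolev regularity is different. The paper establishes that $K_{\wt\la+1/2}\f_{\wt\la}^j\notin H^{1/2}(\Omega_k)$ by citing the embedding $H^{1/2}(\Omega_k)\hookrightarrow L^4(\Omega_k)$, and that $K_{\wt\la-1/2}\f_{\wt\la}^j\in H^s(\Omega_k)$ for $s<1/2+\wt\la$ by verifying membership in $W^{1,2/(2-s)}(\Omega_k)$ and invoking an embedding of that space into $H^s(\Omega_k)$; you replace both citations by a single direct polar-coordinate criterion ($\chi(r)r^{-\alpha}g(\te)\in H^s$ on a sector iff $\alpha+s<1$, for smooth nonvanishing $g$), which is equally valid and arguably more transparent. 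The other genuine difference is that you make explicit the ``no cancellation'' step for the necessity direction, which the paper compresses into ``Clearly, this can happen only if $\cU=\cI_n$'': you correctly observe that cancellation against $u$ is impossible because the von Neumann decomposition $w=u+v_++v_-$ is unique (so $w\in\oplus H^{1/2}$ iff $v_++v_-\in\oplus H^{1/2}$), and that cancellation among the strongly singular terms is impossible because distinct eigenvalues give distinct radial exponents while, for a fixed double eigenvalue, $\{\cS\f_{\wt\la}^1,\cS\f_{\wt\la}^2\}$ is orthonormal (since $\cS$ is unitary and maps $\cF_{\wt\la}^N$ onto $\cF_{-\wt\la}^N$). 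Your remark that $\min\sigma(J_N)\cap(0,\infty)$ necessarily lies in $(0,1/2)$ when $n\ge1$, so that the two minima in the statement coincide, is also a worthwhile point that the paper leaves implicit.
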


Before beginning the proof, we recall a few facts about modified Bessel functions, which will play a key role.
In particular, the Bessel function $K_\nu(z)$ is analytic on $\CC\setminus(-\infty,0]$, $K_{\nu}(z)=K_{-\nu}(z)$ (see \cite[Equation (10.27.3)]{DLMF}) and let $\Gamma(\cdot)$ denote Euler's Gamma function. Then, the following asymptotic behaviour holds
for all $\nu > 0$
\begin{align}\label{eq:Kasymp}
   K_{\nu}(z)\sim_{z\to 0^+}
    \dfrac{\Gamma(\nu)}{2}\left(\dfrac{z}{2}\right)^{-\nu}  \quad\text{and}\quad
   K_{\nu}(z)\sim_{z\to\infty}\left(\dfrac{\pi}{2z}\right)^{1/2}e^{-z},
\end{align}
by \cite[Equation (10.30.2) and (10.30.3)]{DLMF} and \cite[Equation (10.25.3)]{DLMF}, respectively. 

\begin{proof}[Proof of Theorem~\ref{t-distinguished}]
	Let $\wt\la\in\sigma(J_N)\cap (0,1/2)$.
We claim that
	\begin{equation}\label{eq:regularity1}
		K_{\wt\la+1/2}(r)\f_{\wt\la}^j(\te)\notin H^{1/2}(\Omega_k,\CC^2),\qquad k\in\{1,2,\dots,N\},
	\end{equation}
	where, with a slight abuse of notation, $\f_{\wt\la}^j$ is understood to be its restriction to the interval $\II_k$.
	The space $H^{1/2}(\Omega_k,\CC^2)$ is embedded into $L^4(\Omega_k,\CC^2)$ by~\cite[Corollary 4.53]{DD07}. However, the asymptotics of $K_{\wt\la+1/2}(r)$ as $r\rightarrow0^+$ for $\nu = \wt\la + 1/2$ imply that $K_{\wt\la + 1/2}\f_{\wt\la}^j\notin L^4(\Omega_k,\CC^2)$ and hence $K_{\wt\la + 1/2}\f_{\wt\la}^j\notin H^{1/2}(\Omega_k,\CC^2)$, proving the claim.
	
	Furthermore, we claim 
	\begin{equation}\label{eq:regularity2}
	K_{\wt\la-1/2}(r)\f_{\wt\la}^j(\te)
	\in H^s(\Omega_k,\CC^2),\qquad k\in\{1,2,\dots,N\},~\text{for all}\,s<1/2+\wt\la.
	\end{equation}
	The Sobolev space $H^1_{\frac{2}{2-s}}(\Omega_k,\CC^2) = \{u\colon u,|\nabla u|\in L^{\frac{2}{2-s}}(\Omega_k,\CC^2)\}$
	is embedded into $H^s(\Omega_k,\CC^2)$ though (see e.g. \cite[Equation (1.4.4.5)]{G}), so it remains to verify that $
	K_{\wt\la-1/2}(r)\f_{\wt\la}^j
	\in H^1_{\frac{2}{2-s}}(\Omega_k,\CC^2)$.
	The asymptotics of $K_\nu$ in~\eqref{eq:Kasymp} yields that 
	$K_{\wt\la-1/2}\f_{\wt\la}^j
	\in L^{\frac{2}{2-s}}(\Omega_k,\CC^2)$.
	Combining the representation
	\[
		\big|\nabla (K_{\wt\la-1/2}\f_{\wt\la}^j)\big|^2(r,\te) = 
		| K_{\wt\la-1/2}'(r)|^2|\f_{\wt\la}^j(\te)|^2 + \frac{K^2_{\wt\la-1/2}(r)}{r^2}|(\f_{\wt\la}^j)'(\te)|^2,
	\]
	with the identity (cf. \cite[Equation (10.29.2)]{DLMF})
	\[	
		K'_{\wt\la-1/2}(r) = -K_{\wt\la-3/2}(r)-\frac{(\wt\la-1/2)K_{\wt\la-1/2}(r)}{r},
	\]
	and the asymptotics in~\eqref{eq:Kasymp}, we see that
	$|\nabla (K_{\wt\la-1/2}\f_{\wt\la}^j)|
	\in L^{\frac{2}{2-s}}(\Omega_k,\CC^2)$. The claim in equation~\eqref{eq:regularity2} follows.
	
	The decomposition from Theorem~\ref{t-nbreakdown} then says that the theorem holds if and only if the functions in  $\dom\fD_N^{\cU}$ have no terms with the factor $K\ci{\wt\la+1/2}(r)$ present. Clearly, this can happen only if $\cU = \cI_n$.
\end{proof}

\begin{rem}
	If $0\in\sigma(J_N)$, the operator $\fD_N$ has no self-adjoint extension whose domain is contained in
	$\oplus_{j=1}^N H^{1/2}(\Omega_j,\CC^2)$. In this case, the construction in the proof of Theorem~\ref{t-distinguished} can be repeated to observe that, for $\varepsilon >0$, a self-adjoint extension exists with domain contained in $\oplus_{j=1}^N H^{1/2-\varepsilon}(\Omega_j,\CC^2)$. However, this inclusion does not allow for a unique distinguished self-adjoint extension to be fixed.
	\hfill$\diamondsuit$
\end{rem}

\subsection*{Acknowledgement}
VL acknowledges the support by the grant No.~21-07129S 
of the Czech Science Foundation (GA\v{C}R). The authors would like to thank the referees for their thoughtful consideration of the manuscript, which has resulted in several improvements.

\begin{appendix}
\section{Closedness of the operator $\bd_{\wt\la}$}\label{app}
\begin{lem}
	The closure of the restriction $\bd_{\wt\la}\upharpoonright C_0^{\infty}(\RR_+,\CC^2)$ is $\bd_{\wt\la}$ with domain given by \eqref{eq:1dDirac}. In particular, $\bd_{\wt\la}$  with domain given by \eqref{eq:1dDirac} is closed.
\end{lem}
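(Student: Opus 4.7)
The plan is to prove two things: that $\bd_{\wt\la}$ equipped with the domain in~\eqref{eq:1dDirac} is closed, and that $C_0^\infty(\RR_+,\CC^2)$ is dense in this domain with respect to the graph norm. Together these give $\overline{\bd_{\wt\la}\!\upharpoonright\! C_0^\infty(\RR_+,\CC^2)} = \bd_{\wt\la}$. The non-threshold case $\wt\la\ne 1/2$ and the threshold case $\wt\la = 1/2$ require different arguments.

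For $\wt\la\ne 1/2$, the strategy is to show that the graph norm of $\bd_{\wt\la}$ is equivalent to the $H^1$-norm on $H^1_0(\RR_+,\CC^2)$. A direct integration by parts, valid on $C_0^\infty(\RR_+)$ and extended by density to $H^1_0(\RR_+)$, yields the identities
\[
\int_0^\infty\bigl|\psi' - \tfrac{\wt\la}{r}\psi\bigr|^2 dr = \int_0^\infty|\psi'|^2\,dr + \wt\la(\wt\la-1)\int_0^\infty\frac{|\psi|^2}{r^2}\,dr,
\]
\[
\int_0^\infty\bigl|\psi' + \tfrac{\wt\la}{r}\psi\bigr|^2 dr = \int_0^\infty|\psi'|^2\,dr + \wt\la(\wt\la+1)\int_0^\infty\frac{|\psi|^2}{r^2}\,dr.
\]
Coupled with the classical Hardy inequality $\int_0^\infty|\psi|^2/r^2\,dr \le 4\int_0^\infty|\psi'|^2\,dr$ on $H^1_0(\RR_+)$, these yield two-sided bounds $c(\wt\la)\|\psi'\|_{L^2}^2 \le \|\psi'\mp \tfrac{\wt\la}{r}\psi\|_{L^2}^2 \le C(\wt\la)\|\psi'\|_{L^2}^2$, with the lower constant strictly positive precisely because $\wt\la\ne 1/2$ is the critical case for equality in Hardy. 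Closedness of $\bd_{\wt\la}$ is then inherited from completeness of $H^1_0(\RR_+,\CC^2)$, and the core property of $C_0^\infty(\RR_+,\CC^2)$ follows from its standard $H^1$-density in $H^1_0(\RR_+,\CC^2)$.

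For $\wt\la = 1/2$, the second-component condition is still $\psi_2\in H^1_0(\RR_+)$ and the second identity above (with $\wt\la(\wt\la+1)=3/4>0$) handles that component as before. The first-component condition is genuinely weaker than $\psi_1\in H^1_0(\RR_+)$, but the substitution $\varphi := r^{-1/2}\psi_1$ gives the clean relations $\psi_1'-\psi_1/(2r) = r^{1/2}\varphi'$ and $\|\psi_1\|_{L^2}^2 = \int_0^\infty r|\varphi|^2\,dr$. This isometrically identifies the first-component operator with the weak derivative acting on the weighted Sobolev space $H^1_r(\RR_+) := \{\varphi\in L^2(\RR_+;r\,dr) : \varphi'\in L^2(\RR_+;r\,dr)\}$, which via radial extension is unitarily equivalent to the radial subspace of $H^1(\RR^2)$. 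Closedness is then the standard closedness of the weak derivative in weighted $L^2$, immediate from a distributional argument.

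The main obstacle is to show that $C_0^\infty(\RR_+)$ is dense in $H^1_r(\RR_+)$. Truncation near infinity via a scaled cutoff $\chi(r/n)$ is routine and produces an error that vanishes by dominated convergence. The delicate point is truncation near the origin: a linear cutoff on $(\epsilon,2\epsilon)$ leaves an $O(1)$ error in the $H^1_r$-norm because of the weight $r$. I resolve this with a logarithmic cutoff $\eta_\epsilon$, equal to $0$ on $(0,\epsilon^2)$, to $1$ on $(\epsilon,\infty)$, and linear in $\log r$ in between, which is the standard capacity argument reflecting the fact that a single point has zero $H^1$-capacity in $\RR^2$. Equivalently, this corresponds via the radial identification to density of $C_0^\infty(\RR^2\setminus\{0\})$ in $H^1(\RR^2)$. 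Combined with mollification this yields $C_0^\infty(\RR_+)$ approximants of $\varphi$ in $H^1_r$, which pull back under $\psi_1 = r^{1/2}\varphi$ to $C_0^\infty(\RR_+)$ approximants of $\psi_1$ in the graph norm, completing the argument.
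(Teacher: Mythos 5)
Your argument for $\wt\la\ne 1/2$ is essentially the paper's: the integration-by-parts identity that exhibits the graph norm as an $H^1$-norm with a radial quadratic term $\wt\la(\wt\la\mp 1)\|\psi_j/r\|^2$, combined with one-dimensional Hardy, gives the two-sided equivalence with $\|\cdot\|_{H^1}$ with lower constant proportional to $(1-2\wt\la)^2$, and both closedness and the core property follow at once. The paper presents this computation on $C_0^\infty$; your phrasing ``extended by density to $H^1_0$'' is a harmless repackaging of the same estimate.

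For the threshold case $\wt\la = 1/2$ you depart genuinely from the paper. The paper does not prove anything directly here: it gives an alternative characterization of $\dom\bd_{1/2}$ by citing Derezi\'nski--Ruba and Bruneau--Derezi\'nski--Georgescu, and then invokes Cassano--Pizzichillo for closedness. You instead give a self-contained argument via the Liouville substitution $\psi_1 = r^{1/2}\varphi$, which correctly conjugates $\psi_1\mapsto\psi_1'-\psi_1/(2r)$ on $L^2(\RR_+,dr)$ into the weak derivative $\varphi\mapsto\varphi'$ on $L^2(\RR_+,r\,dr)$, identifying the domain with the radial part of $H^1(\RR^2)$. This is a nice observation and makes the closedness transparent. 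What each approach buys: the paper's is short but opaque (outsourcing to three references), yours is longer but elementary and structurally explains \emph{why} $\wt\la=1/2$ is the threshold, namely that it is precisely the Liouville-transformed free momentum in the plane.

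There is, however, a gap in your density argument that you should close. After the substitution you must show $C_0^\infty(\RR_+)$ is dense in $H^1_r(\RR_+)=\{\varphi: \varphi,\varphi'\in L^2(r\,dr)\}$, and for the cutoff near the origin you use the logarithmic profile $\eta_\epsilon$ with $\eta_\epsilon'\sim (r\log(1/\epsilon))^{-1}$ on $(\epsilon^2,\epsilon)$. The error you must control is
\[
\frac{1}{\log^2(1/\epsilon)}\int_{\epsilon^2}^{\epsilon}\frac{|\varphi(r)|^2}{r}\,dr.
\]
For a general $\varphi\in H^1_r$ this need not vanish: radial $H^1(\RR^2)$ functions can grow like $\sqrt{\log(1/r)}$ as $r\to 0$, and inserting $|\varphi(r)|^2\lesssim 1+\log(1/r)$ makes the integral of order $\log^2(1/\epsilon)$, so the ratio stays bounded away from $0$. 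The standard fix, which your write-up skips, is to first truncate $\varphi$ at level $M$ (Stampacchia truncation preserves $H^1_r$ and converges to $\varphi$ as $M\to\infty$) so that the log-cutoff estimate gives an $O(1/\log(1/\epsilon))$ error, and only then mollify. Alternatively you may just cite the classical fact that a point has zero $H^1$-capacity in $\RR^2$, i.e. $C_0^\infty(\RR^2\setminus\{0\})$ is dense in $H^1(\RR^2)$, together with rotational averaging to obtain radial approximants; but as stated the log cutoff alone is not enough.
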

\begin{proof}
	We split the analysis into consideration of the two cases $\wt\la \ne 1/2$ and $\wt\la = 1/2$.
	
	Let $\wt\la \ne 1/2$.
	For any $\psi\in C^\infty_0(\RR_+,\CC^2)$, observe that
	\[
	\begin{aligned}
		\|\bd_{\wt\la}\psi&\|^2_{L^2(\RR_+,\CC^2)}
		\!=\!
		\int_{\RR_+}\left|\psi_2' + \frac{\wt\la\psi_2}{r}\right|^2d r
		+
		\int_{\RR_+}\left|\psi_1'- \frac{\wt\la\psi_1}{r}\right|^2d r\\
		& 
		\! =\!
		\int_{\RR_+}\left(|\psi_2'|^2 +
		\frac{2\wt\la \Re(\psi_2'\overline{\psi_2})}{r} +\frac{\wt\la^2|\psi_2|^2}{r^2}\right)d r
			\!+\!
			\int_{\RR_+}\left(|\psi_1'|^2 -
			\frac{2\wt\la \Re(\psi_1'\overline{\psi_1})}{r} +\frac{\wt\la^2|\psi_1|^2}{r^2}\right)d r\\
		& \!=\!
		\int_{\RR_+}\left(|\psi_2'|^2 +
		\frac{\wt\la(|\psi_2|^2)'}{r} +\frac{\wt\la^2|\psi_2|^2}{r^2}\right)d r
		\!+\!
		\int_{\RR_+}\left(|\psi_1'|^2 -
		\frac{\wt\la(|\psi_1|^2)'}{r} +\frac{\wt\la^2|\psi_1|^2}{r^2}\right)d r\\
		&\!=\!
		\int_{\RR_+}\left(|\psi_2'|^2 
		 +\frac{(\wt\la^2+\wt\la)|\psi_2|^2}{r^2}\right)d r
		+
		\int_{\RR_+}\left(|\psi_1'|^2 	+\frac{(\wt\la^2-\wt\la)|\psi_1|^2}{r^2}\right)d r,\\
	\end{aligned}	
	\]
	where in the last step we performed integration by parts.
	Combining the above formula with the one-dimensional Hardy inequality 
	\[
		\int_{\RR_+}|\psi'|^2dr \ge \int_{\RR_+}\frac{|\psi|^2}{4r^2}dr
		,\qquad 
		\forall\,\psi \in H^1_0(\RR_+),
	\]
	we conclude that the graph-norm induced by the operator $\bd_{\wt\la}$ on the space $C^\infty_0(\RR_+,\CC^2)$ is equivalent to the $H^1$-norm.
	Indeed, we get the double sided estimate
	\[
		\min\{1,(1-2\wt\la)^2\}\|\psi\|^2_{H^1(\RR_+,\CC^2)}   \le
		\|\bd_{\wt\la}\psi\|^2_{L^2(\RR_+,\CC^2)} +\|\psi\|^2_{L^2(\RR_+,\CC^2)}
		\le \left(1+2\wt\la\right)^2
		\|\psi\|^2_{H^1(\RR_+,\CC^2)}   
	\]	
	for any $\psi\in C^\infty_0(\RR_+,\CC^2)$.		
	Hence, we have $\bd_{\wt\la} = \overline{\bd_{\wt\la}\upharpoonright C^\infty_0(\RR_+,\CC^2)}$ and thus the operator $\bd_{\wt\la}$ is closed in this case.
		
	Let $\wt\la = 1/2$. Using \cite[Proposition 40]{DR} (see also \cite[Proposition 3.1]{BDG}) one can check that the domain of $\bd_{1/2}$ can be alternatively characterized as
	\[
		\dom\bd_{1/2} = \left\{\psi\in L^2(\RR_+,\CC^2)\colon
		\psi_1'-\frac{\psi_1}{2r},
		\psi_2'+\frac{\psi_2}{2r}\in L^2(\RR_+)\right\}.  
	\]	 
	Closedness of $\bd_{1/2}$ then follows from~\cite[Equation (1.10) and Theorem 1.1\,(ii)]{CP}.
\end{proof}
\end{appendix}

%%%%%%%%%%%%%%%%%%%%%%%%%%%%%
%%%%%%%%%%%%%%%%%%%%%%%%%%%%%

\end{document}